\begin{document}

%     +-----------------------------+
%     |    Notification             |
%     +-----------------------------+
\def\ALERT#1{{\large\bf $\clubsuit$#1$\clubsuit$}}

%     +-----------------------------------------------------------+
%     |    Sectionwise numbering of formulas, figures, theorems   |
%     +-----------------------------------------------------------+
\numberwithin{equation}{section}
\newtheorem{defin}{Definition}%[section]
\newtheorem{theorem}{Theorem}%[section]
\newtheorem{proposition}{Proposition}%[section]
\newtheorem{notice}{Notice}
\newtheorem*{convention}{Convention}
\newtheorem{hypothesis}{Hypothesis}
\newtheorem{lemma}{Lemma}%[section]
\newtheorem{cor}{Corollary}%[section]
\newtheorem{example}{Example}%[section]
\newtheorem{remark}{Remark}%[section]
\newtheorem{conj}{Conjecture}
\def\begproof{\noindent{\bf Proof: }}
\def\endproof{\par\rightline{\vrule height5pt width5pt depth0pt}\medskip}
%     +------------------+
%     |     Operators    |
%     +------------------+
\def\div{\nabla\cdot}
\def\divergence{{\rm div}}
\def\rot{\nabla\times}
\def\sign{{\rm sign}}
\def\arsinh{{\rm arsinh}}
\def\arcosh{{\rm arcosh}}
\def\diag{{\rm diag}}
\def\const{{\rm const}}
\def\d{\,\mathrm{d}}

%%%%%%%%%%%%%%%%%% Matthias
\newcommand{\eps}{\varepsilon}
\newcommand{\Lm}{\mathcal{L}}
\newcommand{\Hm}{\mathcal{H}}
\newcommand{\E}{\mathcal{E}}
\newcommand{\T}{\mathcal{T}}
\newcommand{\Pol}{\mathcal{P}}
\newcommand{\X}{\mathcal{X}}
\newcommand{\RR}{\mathbb{R}}
\newcommand{\NN}{\mathbb{N}}

% \def\div{{\rm div }}
% \def\d{\,{\rm d }}
%     +-----------------------------------------+
%     |      Redefinition of greek letters      |
%     +-----------------------------------------+
%\def\varrho{\varrho}
\def\eps{\varepsilon}
\def\phi{\varphi}
\def\theta{\vartheta}
%     +-------------------------------------------+
%     |      The sets C, R, Q, M, N, P and Z      |
%     +-------------------------------------------+
\def\N{\mathbb{N}}
\def\R{\mathbb{R}}
\def\C{\hbox{\rlap{\kern.24em\raise.1ex\hbox
      {\vrule height1.3ex width.9pt}}C}}
\def\P{\hbox{\rlap{I}\kern.16em P}}
\def\Q{\hbox{\rlap{\kern.24em\raise.1ex\hbox
      {\vrule height1.3ex width.9pt}}Q}}
\def\M{\hbox{\rlap{I}\kern.16em\rlap{I}M}}
\def\Z{\hbox{\rlap{Z}\kern.20em Z}}
%    +----------------+
%    |    Equations   |
%    +----------------+
\def\({\begin{eqnarray}}
\def\){\end{eqnarray}}
\def\[{\begin{eqnarray*}}
\def\]{\end{eqnarray*}}
%   +---------------------------+
%   |    Partial derivatives    |
%   +---------------------------+
\def\part#1#2{\frac{\partial #1}{\partial #2}}
\def\partk#1#2#3{\frac{\partial^#3 #1}{\partial #2^#3}} 
\def\mat#1{{D #1\over Dt}}
\def\dx{\nabla_x}
\def\dv{\nabla_v}
\def\grad{\nabla}
\def\curl{\mathrm{curl}}

%   +-----------------------+
%   |      Norms            |
%   +-----------------------+
\def\Norm#1{\left\| #1 \right\|}
%   +-----------------------+
%   |    Miscellaneous      |
%   +-----------------------+
\def\pmb#1{\setbox0=\hbox{$#1$}
  \kern-.025em\copy0\kern-\wd0
  \kern-.05em\copy0\kern-\wd0
  \kern-.025em\raise.0433em\box0 }
\def\bar{\overline}
\def\lbar{\underline}
\def\fref#1{(\ref{#1})}
\def\half{\frac{1}{2}}
\def\oo#1{\frac{1}{#1}}

\def\tot#1#2{\frac{\d #1}{\d #2}} 
\def\laplace{\Delta}
\def\d{\,\mathrm{d}}
\def\N{\mathbb{N}}
\def\R{\mathbb{R}}
\def\supp{\mbox{supp }}
\def\eps{\varepsilon}
\def\phi{\varphi}

\def\E{\mathcal{E}}
\def\bbE{\mathbb{E}}

\def\Ikn{\I_{k_n}}
\def\sIkn{(\Ikn)_{n\in\N}}

\def\O{\mathcal{O}}
\def\A{\mathcal{A}}
\def\calG{\mathcal{G}}
\def\calF{\mathcal{F}}
\def\calX{\mathcal{X}}
\def\calY{\mathcal{Y}}
\def\calT{\mathcal{T}}
\def\calH{\mathcal{H}}
\def\calR{\mathcal{R}}
\def\bbX{\mathbb{X}}

\def\bZ{{\bar Z}}

\def\laplaceD{\laplace}

\def\blueJH#1{{\textcolor{blue}{#1}}}

\def\comment#1{\textbf{[Comment: #1]}}

\def\flux{u}

%    +-------------+
%    +    TITLE    +
%    ---------------

\centerline{{\huge Notes on a PDE System}}
\centerline{{\huge for Biological Network Formation}}
\vskip 7mm

%    -----------------
%    +    AUTHORS    +
%    -----------------

\vskip 5mm

\centerline{
{\large Jan Haskovec}\footnote{Mathematical and Computer Sciences and Engineering Division,
King Abdullah University of Science and Technology,
Thuwal 23955-6900, Kingdom of Saudi Arabia; 
{\it jan.haskovec@kaust.edu.sa}}\qquad
{\large Peter Markowich}\footnote{Mathematical and Computer Sciences and Engineering Division,
King Abdullah University of Science and Technology,
Thuwal 23955-6900, Kingdom of Saudi Arabia; 
{\it peter.markowich@kaust.edu.sa}}\qquad
{\large Beno\^\i t Perthame}\footnote{Sorbonne Universit\'eŽs, UPMC Univ Paris 06, Inria, Laboratoire Jacques-Louis Lions  UMR CNRS 7598, F-75005, Paris, France,
{\it benoit.perthame@upmc.fr}}\qquad
{\large Matthias Schlottbom}\footnote{Institute for Computational and Applied Mathematics,
University of M\"unster, Einsteinstr. 62, 48149 M\"unster, Germany; {\it schlottbom@uni-muenster.de}}
}
\vskip 6mm

%    ------------------
%    +    ABSTRACT    +
%    ------------------

\noindent{\bf Abstract.}
We present new analytical and numerical results for the
elliptic-parabolic system of partial differential equations proposed by
Hu and Cai \cite{Hu, Hu-Cai}, which models the formation
of biological transport networks.
The model describes the pressure field using a Darcy's type
equation and the dynamics of the conductance network under pressure force effects.
Randomness in the material structure is represented by a linear diffusion term %with rate $D \geq 0$
and conductance relaxation by an algebraic decay term. % with parameter $\gamma\in\R$.
The analytical part extends the results of \cite{HMP15}
regarding the existence of weak and mild solutions to the
whole range of meaningful relaxation exponents. % parameter values $\gamma \geq 1/2$.
Moreover, we prove finite time extinction %($1/2 \leq \gamma \leq 1$)
or break-down of solutions in the spatially one-dimensional setting
for certain ranges of the relaxation exponent.
We also construct stationary solutions for the case of vanishing diffusion
and critical value of the relaxation exponent,
using a variational formulation and a penalty method.

The analytical part is complemented by extensive numerical simulations.
We propose a discretization based on mixed finite elements and study
the qualitative properties of network structures for various parameters values.
Furthermore, we indicate numerically that some analytical results
proved for the spatially one-dimensional setting are likely to be valid also in several space dimensions.

\vskip 5mm

%    -------------------
%    +    KEY WORDS    +
%    -------------------
\noindent{\bf Key words:} Network formation; Weak solutions; Stability; Penalty method; Numerical experiments.
\vspace{2mm}

\noindent{\bf Math. Class. No.:} 35K55; 35B32; 92C42

\tableofcontents

%%%%%%%%%%%%%%%%%%%%%%%%%%%%%%%%%%%%%%%%%%%%%%%%%%%%%%%%%%%%%%%%
%--------------------------------------
\section{Introduction}
\label{sec:introduction}
%--------------------------------------

In \cite{HMP15} we presented a mathematical analysis of the PDE system modeling formation of biological transportation networks
\(
    -\div [(rI + m\otimes m)\grad p] &=& S,   \label{eq01}\\
    \part{m}{t} - D^2\laplace m - c^2(m\cdot\grad p)\grad p + \alpha |m|^{2(\gamma-1)}m &=& 0, \label{eq02}
\)
for the scalar pressure $p=p(t,x)\in\R$ of the fluid transported within the network and vector-valued conductance $m=m(t,x)\in\R^d$
with $d\leq 3$ the space dimension.
The parameters are $D\geq 0$ (diffusivity), $c>0$ (activation parameter), $\alpha>0$ and $\gamma\in\R$;
in particular, we restricted ourselves to $\gamma\geq 1$ in \cite{HMP15}.
The scalar function $r=r(x) \geq r_0 > 0$ describes the isotropic background permeability of the medium.
The source term $S=S(x)$ is assumed to be independent of time and $\gamma\in\R$ is a parameter crucial for the type of networks formed \cite{Hu-Cai}.
In particular, experimental studies of scaling relations of conductances (diameters) of parent and daughter edges
in realistic network modeling examples suggest that $\gamma=1/2$ can be used to model blood vessel systems in the human body
and $\gamma=1$ is adapted to  leaf venation \cite{Hu, Hu-pc}.
For the details on the modeling which leads to \eqref{eq01}, \eqref{eq02} we refer to \cite{MMM}.

The system was originally derived in \cite{Hu, Hu-Cai} as the formal gradient flow of the continuous version
of an energy functional describing formation of biological transportation networks on discrete graphs.
We pose \eqref{eq01}, \eqref{eq02} on a bounded domain $\Omega\subset\R^d$ with smooth boundary $\partial\Omega$,
subject to homogeneous Dirichlet boundary conditions on $\partial\Omega$ for $m$ and $p$:
\(   \label{BC_0}
   m(t,x) = 0,\qquad p(t,x)=0 \qquad \mbox{for } x\in\partial\Omega,\; t\geq 0,
\)
and subject to the initial condition for $m$:
\( \label{IC_0}
   m(t=0,x)=m^0(x)\qquad\mbox{for } x\in\Omega.
\)

The main mathematical interest of the PDE system for network formation stems from the highly unusual nonlocal coupling of the 
elliptic equation \eqref{eq01} for the pressure $p$ to the reaction-diffusion equation \eqref{eq02} for the conductance vector $m$
via the pumping term $+c^2 (\grad p\otimes\grad p) m$ and the latter term's potential equilibriation with the decay term
$- |m|^{2(\gamma-1)}m$. 
A major observation concerning system \eqref{eq01}--\eqref{eq02} is that it represents
the formal $L^2(\Omega)$-gradient flow associated with the highly non-convex energy-type functional
\(   \label{energy}
   \E(m) := \frac12 \int_\Omega \left(D^2 |\grad m|^2 + \frac{\alpha}{\gamma}|m|^{2\gamma} + c^2|m\cdot\grad p[m]|^2  + c^2 r(x) |\grad p[m]|^2 \right) \d x,
\)
where $p=p[m] \in H^1_0(\Omega)$ is the unique solution of the Poisson equation \eqref{eq01} with given $m$,
subject to the homogeneous Dirichlet boundary condition on $\partial\Omega$.
Note that \eqref{energy} consists of, respectively, the diffusive energy term, metabolic (relaxation) energy,
and the last two terms account for network-fluid interaction energy.
We have:

\begin{lemma}[Lemma 1 in \cite{HMP15}]\label{lem:energy}
Let $\E(m^0) < \infty$. Then the energy $\E(m(t))$ is nonincreasing along smooth solutions of \eqref{eq01}--\eqref{eq02} and satisfies
\[
    \tot{}{t} \E(m(t)) = - \int_\Omega \left( \part{m}{t}(t,x)\right)^2 \d x.
\] 
\end{lemma}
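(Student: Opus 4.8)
The plan is to differentiate $t\mapsto\E(m(t))$ and show that the derivative collapses to $-\Norm{\partial_t m}_{L^2(\Omega)}^2$ once the equations \eqref{eq01}--\eqref{eq02} are inserted; monotonicity is then immediate. Since the statement concerns smooth solutions, I differentiate freely under the integral sign and use that $m(t,\cdot)$ and $p[m(t)](\cdot)$ vanish on $\partial\Omega$ for every $t$, so that $\partial_t m$ and $\partial_t p$ also have zero trace and all boundary terms from integration by parts vanish.

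First I would treat the three genuinely $m$-dependent terms. The diffusive term gives $\tot{}{t}\tfrac{D^2}{2}\int_\Omega|\grad m|^2\,\d x = D^2\int_\Omega\grad m:\grad\partial_t m\,\d x = -D^2\int_\Omega\laplace m\cdot\partial_t m\,\d x$. The metabolic term, using the pointwise identity $\tot{}{t}|m|^{2\gamma} = 2\gamma|m|^{2(\gamma-1)}m\cdot\partial_t m$ (no issue at $m=0$ for $\gamma\geq1$), gives $\tot{}{t}\tfrac{\alpha}{2\gamma}\int_\Omega|m|^{2\gamma}\,\d x = \alpha\int_\Omega|m|^{2(\gamma-1)}m\cdot\partial_t m\,\d x$.

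The key step is the network--fluid interaction part $\tfrac{c^2}{2}\int_\Omega\bigl(|m\cdot\grad p|^2 + r|\grad p|^2\bigr)\d x = \tfrac{c^2}{2}\int_\Omega(\grad p)^\top A[m]\grad p\,\d x$ with $A[m]:=rI+m\otimes m$. Using the weak form of \eqref{eq01}, $\int_\Omega(\grad\phi)^\top A[m]\grad p\,\d x=\int_\Omega S\phi\,\d x$ for all $\phi\in H^1_0(\Omega)$, the choice $\phi=p$ shows this part equals $\tfrac{c^2}{2}\int_\Omega S\,p[m]\,\d x$, hence its time derivative is $\tfrac{c^2}{2}\int_\Omega S\,\partial_t p\,\d x$. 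To evaluate $\int_\Omega S\,\partial_t p\,\d x$ I differentiate the weak form in $t$ with the fixed test function $\phi=p$, obtaining $\int_\Omega(\grad p)^\top(\partial_t A[m])\grad p\,\d x+\int_\Omega(\grad p)^\top A[m]\grad\partial_t p\,\d x=0$ with $\partial_t A[m]=\partial_t m\otimes m+m\otimes\partial_t m$; the first integrand is $2(m\cdot\grad p)(\partial_t m\cdot\grad p)$, while the second equals $\int_\Omega S\,\partial_t p\,\d x$ by symmetry of $A[m]$ and the weak form with $\phi=\partial_t p\in H^1_0(\Omega)$. Hence $\int_\Omega S\,\partial_t p\,\d x = -2\int_\Omega(m\cdot\grad p)\,\grad p\cdot\partial_t m\,\d x$, so this contribution to $\tot{}{t}\E$ is $-c^2\int_\Omega(m\cdot\grad p)\,\grad p\cdot\partial_t m\,\d x$.

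Adding the three contributions yields
\[
   \tot{}{t}\E(m(t)) &=& \int_\Omega\partial_t m\cdot\Bigl(-D^2\laplace m + \alpha|m|^{2(\gamma-1)}m - c^2(m\cdot\grad p)\,\grad p\Bigr)\d x,
\]
and \eqref{eq02} identifies the bracket with $-\partial_t m$, giving $\tot{}{t}\E(m(t)) = -\int_\Omega|\partial_t m|^2\,\d x\leq 0$, as claimed. (Equivalently, one may simply test \eqref{eq02} with $\partial_t m$ and recognize each term, other than $\Norm{\partial_t m}_{L^2}^2$, as a time derivative of a piece of $\E$, the coupling term requiring exactly the pressure computation above.) The only delicate point is the differentiability of $t\mapsto p[m(t)]\in H^1_0(\Omega)$ and the legitimacy of differentiating the weak formulation; for smooth $m$ this follows from uniform ellipticity of $A[m]$ (as $r\geq r_0>0$) together with smooth dependence of the Lax--Milgram solution on the coefficients, but it is in any case covered by the regularity implicit in "smooth solutions".
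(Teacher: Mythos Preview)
Your argument is correct and is the standard gradient-flow computation; the paper itself does not reprove the lemma but merely quotes it from \cite{HMP15}, so there is no in-paper proof to compare against. One phrasing issue: when you ``differentiate the weak form in $t$ with the fixed test function $\phi=p$'', $p$ is not time-independent, so what you really do is differentiate with an arbitrary time-independent $\phi$ and then evaluate at $\phi=p(t,\cdot)$; the computation that follows is exactly this and is fine.
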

As usual, along weak solutions,  we obtain a weaker form of energy dissipation, see formula \eqref{energy_ineq} below.

In \cite{HMP15} we provided the following analytical results for \eqref{eq01}--\eqref{IC_0} in the case $\gamma\geq 1$:
\begin{itemize}
 \item Existence of global weak solutions in the energy space
 \item Existence and uniqueness of local in time mild solutions (global in 1d)
 \item Existence of nontrivial (i.e., $m\not\equiv 0$) stationary states and analysis of their stability (nonlinear in 1d, linearized in multiple dimensions)
 \item The limit $D\to 0$ in the 1d setting
\end{itemize}

The purpose of this paper is to extend the analysis of the network formation system by providing several new results, in particular:
\begin{itemize}
 \item Existence of global weak solutions in the energy space for $1/2 \leq \gamma < 1$ and
 of local in time mild solutions for $1/2 < \gamma < 1$ (Section \ref{S1}).
 \item Analysis of the system in the 1d setting: finite time breakdown of solutions for $\gamma<1/2$,
 infinite time extinction for $1/2 \leq \gamma \leq 1$ with small sources, nonlinear stability analysis for $\gamma\geq 1/2$ and $D=0$
 (Section \ref{S2}).
 \item Construction of stationary solutions in the case $\gamma=1$ and $D=0$ (Section \ref{S3}).
% \item Compactness of the support of $m$ for $\gamma=1/2$
\end{itemize}
The analytical part is complemented by extensive numerical examples in Section \ref{sec:Num}.
We propose a discretization based on mixed finite elements and study
the qualitative properties of network structures for various parameters values.
Furthermore, we indicate numerically that some analytical results
proved for the spatially one-dimensional setting are likely to be valid also in several space dimensions.

\subsection{Scaling analysis}
We introduce the rescaled variables
\[
   x_s := \frac{x}{\bar x}, \qquad t_s := \frac{t}{\bar t},\qquad m_s := \frac{m}{\bar m}, \qquad p_s := \frac{p}{\bar p}, \qquad S_s := \frac{S}{\bar S}
\]
and choose
\[
   \bar x := \mbox{diam}(\Omega), \qquad  \bar m := \sup_{x\in\Omega} |m^0(x)|, \qquad \bar t := \frac{1}{\alpha\bar m^{2(\gamma-1)}},\qquad 
   \bar S := \sup_{x\in\Omega} |S(x)|,\qquad \bar p := \frac{\bar x^2\bar S}{\bar m^2}
\]
which leads to $S_s = \mathcal{O}(1)$, $m_s(t=0) = \mathcal{O}(1)$ and the following rescaled version of \eqref{eq01}--\eqref{eq02},
\[
    -\grad_{x_s}\cdot [(r_sI + m\otimes m)\grad_{x_s} p_s] &=& S_s,   \\
    \part{m_s}{t} - D_s^2\laplace_{x_s} m_s - c_s^2(m_s\cdot\grad_{x_s} p_s)\grad p_s + |m_s|^{2(\gamma-1)}m_s &=& 0,
\]
with
\[
   r_s = \frac{r}{\bar m^2},\qquad D_s^2 = \frac{\bar p\,\bar m^2}{\bar x^2\bar s},\qquad c_s^2 = \frac{c^2\bar p^2}{\alpha \bar x^2\bar m^{2(\gamma-1)}}.
\]
Dropping the index $s$ in the scaled variables, we obtain the system
\(
    -\div [(rI + m\otimes m)\grad p] &=& S,   \label{eq1}\\
    \part{m}{t} - D^2\laplace m - c^2(m\cdot\grad p)\grad p + |m|^{2(\gamma-1)}m &=& 0, \label{eq2}
\)
that we will study in this paper.
Moreover, for simplicity, we set $r(x)\equiv 1$ in the analytical part (Sections \ref{S1}--\ref{S3}).

\begin{convention}
In the following, generic, not necessarily equal, constants will be denoted by $C$.
Moreover, we will make specific use of the Poincar\'e constant  $C_\Omega$, i.e.,
\[
    \Norm{u}_{L^2(\Omega)} \leq C_\Omega \Norm{\grad u}_{L^2(\Omega)} \qquad\mbox{for all } u\in H_0^1(\Omega).
\]
\end{convention}

\section{Existence of global weak solutions for $1/2 \leq \gamma < 1$}\label{S1}
In \cite{HMP15}, Section 2, we provided the proof of existence of global weak solutions for $\gamma\geq 1$
based on the Leray-Schauder fixed point theorem for a regularized version of \eqref{eq1}--\eqref{eq2}
that preserves the energy dissipation structure, and consequent
limit passage to remove the regularization. We now extend the proof to the case $1/2 \leq \gamma < 1$.
However, the case $\gamma=1/2$ requires special care
since the algebraic term in \eqref{eq2} formally becomes $m/|m|$
and an interpretation has to be given for $m=0$.
In particular, \eqref{eq2} has to be substituted by
the differential inclusion
\( \label{m-incl}
    \partial_t m - D^2\laplace m - c^2(m\cdot\grad p[m])\grad p[m] \in -\partial \mathcal{R}(m),
\)
where $\partial\mathcal{R}$ is the subdifferential of $\mathcal{R}(m):=\int_\Omega |m| \d x$, in particular,
\( \label{subdiffR}
   \partial\mathcal{R}(m) = \{ u\in L^\infty(\Omega)^d;\, u(x) &=& m(x)/ |m(x)| \mbox{ if } m(x) \neq 0,\\ |u(x)| &\leq& 1 \mbox{ if } m(x)=0 \}.
\)

\begin{theorem}[Extension of Theorem 1 of \cite{HMP15}]
Let $\gamma \geq 1/2$, $S\in L^2(\Omega)$ and $m^0 \in H_0^1(\Omega)^d \cap L^{2\gamma}(\Omega)^d$.
Then the problem \eqref{eq1}--\eqref{eq2}, \eqref{BC_0}--\eqref{IC_0} (with \eqref{m-incl} instead of \eqref{eq2} if $\gamma=1/2$)
admits a global weak solution $(m,p[m])$ with $\E(m)\in L^\infty(0,\infty)$ and with
\[
     m \in L^\infty(0,\infty; H_0^1(\Omega)) \cap L^\infty(0,\infty; L^{2\gamma}(\Omega)), &\qquad&
    \partial_t m \in L^2((0,\infty)\times\Omega),\\
    \grad p \in L^\infty(0,\infty; L^2(\Omega)), &\qquad&
    m\cdot\grad p \in L^\infty(0,\infty; L^2(\Omega)).
\]
This solution satisfies the energy dissipation inequality,  with $\E$ given by \eqref{energy}, 
\(    \label{energy_ineq}
    \E(m(t)) + \int_0^t \int_\Omega \left( \part{m}{t}(s,x)\right)^2 \d x\d s \leq \E(m^0) \qquad\mbox{for all } t \geq 0.
\)
\label{thm:global_ex}
\end{theorem}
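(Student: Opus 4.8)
The plan is to mimic the argument of \cite{HMP15} for $\gamma\geq 1$: regularize \eqref{eq2} so that the Leray--Schauder fixed-point/compactness scheme of \cite{HMP15} applies while preserving the gradient-flow structure, derive energy estimates that are uniform in the regularization parameter, and then pass to the limit. The one genuinely new difficulty is the algebraic term $|m|^{2(\gamma-1)}m$: for $1/2<\gamma<1$ it is only H\"older (not Lipschitz) at the origin, and for $\gamma=1/2$ it degenerates into the set-valued map $m\mapsto m/|m|$; both issues are absorbed into a single regularization. Throughout we take $D>0$, so that the diffusive term in \eqref{energy} controls $\Norm{\grad m}_{L^2(\Omega)}$.

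\textbf{Step 1: the regularized problem.} For $\delta\in(0,1)$ replace $|m|^{2(\gamma-1)}m$ in \eqref{eq2} by $g_\delta(m):=(\delta^2+|m|^2)^{\gamma-1}m$, and $\frac{\alpha}{\gamma}|m|^{2\gamma}$ in \eqref{energy} by $\frac{\alpha}{\gamma}(\delta^2+|m|^2)^{\gamma}$, yielding an energy $\E_\delta$. For $\gamma\in[1/2,1)$ the field $g_\delta$ is smooth, globally Lipschitz, of at most sublinear growth ($|g_\delta(m)|\leq(\delta^2+|m|^2)^{\gamma-1/2}$, which is bounded when $\gamma=1/2$), and equals the gradient of the convex function $m\mapsto\frac{1}{2\gamma}(\delta^2+|m|^2)^{\gamma}$, so the regularized system retains the $L^2$-gradient-flow structure and Lemma \ref{lem:energy} holds with $\E$ replaced by $\E_\delta$. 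The elliptic solve $m\mapsto p[m]$ is untouched, uniformly elliptic because $r\equiv1$, and testing \eqref{eq1} with $p[m]$ gives $\Norm{\grad p[m]}_{L^2(\Omega)}+\Norm{m\cdot\grad p[m]}_{L^2(\Omega)}\leq C$ with $C$ independent of $m$. Composing this solve with the linear parabolic problem $\partial_t m-D^2\laplace m=c^2(m\cdot\grad p[m])\grad p[m]-g_\delta(m)$ defines a compact map on $C([0,T];L^2(\Omega)^d)$ whose fixed-point set is bounded by the regularized energy identity; Leray--Schauder (and a diagonal argument in $T$) then produces, exactly as in \cite{HMP15}, a global weak solution $m_\delta$ of the regularized system satisfying $\E_\delta(m_\delta(t))+\int_0^t\Norm{\partial_t m_\delta(s)}_{L^2(\Omega)}^2\,\d s\leq\E_\delta(m^0)$ for all $t\geq0$.

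\textbf{Step 2: uniform estimates and compactness.} Since $t\mapsto t^\gamma$ is subadditive for $\gamma\leq1$, one has $(\delta^2+|m^0|^2)^\gamma\leq\delta^{2\gamma}+|m^0|^{2\gamma}$, whence $\E_\delta(m^0)\leq\E(m^0)+C|\Omega|$ uniformly in $\delta$, and moreover $\E_\delta(m^0)\to\E(m^0)$ as $\delta\to0$ by dominated convergence (using $m^0\in L^{2\gamma}(\Omega)^d$ and $|\Omega|<\infty$). Hence the energy inequality furnishes bounds, uniform in $\delta$, for $m_\delta$ in $L^\infty(0,\infty;H_0^1(\Omega))\cap L^\infty(0,\infty;L^{2\gamma}(\Omega))$, for $\partial_t m_\delta$ in $L^2((0,\infty)\times\Omega)$, and for $\grad p[m_\delta]$ and $m_\delta\cdot\grad p[m_\delta]$ in $L^\infty(0,\infty;L^2(\Omega))$ --- precisely the regularity claimed. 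By the Aubin--Lions lemma ($m_\delta$ bounded in $L^2(0,T;H_0^1)$, $\partial_t m_\delta$ in $L^2(0,T;L^2)$) there is a sequence $\delta_k\to0$ with $m_{\delta_k}\to m$ strongly in $L^2((0,T)\times\Omega)$ for every $T$, hence a.e., and --- interpolating with the $L^\infty(L^6)$ bound valid for $d\leq3$ --- strongly in $L^4_{\mathrm{loc}}((0,\infty)\times\Omega)$; also $m_{\delta_k}(t)\rightharpoonup m(t)$ in $H_0^1(\Omega)$ for each $t$. Consequently $m_{\delta_k}\otimes m_{\delta_k}\to m\otimes m$ in $L^2_{\mathrm{loc}}$, and, as in \cite{HMP15}, $\grad p[m_{\delta_k}]\to\grad p[m]$ strongly in $L^2_{\mathrm{loc}}$ --- the weak limit being identified by uniqueness for \eqref{eq1} and upgraded to strong convergence by testing \eqref{eq1} with $p[m_{\delta_k}]$ and invoking weak lower semicontinuity.

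\textbf{Step 3: limit in the equation, energy inequality, and the main obstacle.} Weak convergence of $\partial_t m_{\delta_k}$ and $\grad m_{\delta_k}$ handles the first two terms of \eqref{eq2}, and the pumping term passes to the limit since $m_{\delta_k}\cdot\grad p[m_{\delta_k}]\rightharpoonup m\cdot\grad p[m]$ weakly in $L^2$ while $\grad p[m_{\delta_k}]\to\grad p[m]$ strongly in $L^2$ (product of weakly and strongly convergent $L^2$ factors, convergent in $L^1_{\mathrm{loc}}$), and similarly in \eqref{eq1}. For the algebraic term: if $1/2<\gamma<1$, a.e.\ convergence of $m_{\delta_k}$ plus the bound $|g_{\delta_k}(m_{\delta_k})|\leq C(1+|m_{\delta_k}|^{2\gamma-1})$ in an $L^q$-space with $q>1$ yields, via Vitali's theorem, $g_{\delta_k}(m_{\delta_k})\to|m|^{2(\gamma-1)}m$ in $L^1_{\mathrm{loc}}$. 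If $\gamma=1/2$, then $u_{\delta_k}:=(\delta_k^2+|m_{\delta_k}|^2)^{-1/2}m_{\delta_k}$ satisfies $|u_{\delta_k}|\leq1$, hence $u_{\delta_k}\rightharpoonup u$ weak-$*$ in $L^\infty$ along a subsequence; passing to the limit in the convexity inequality $\int_\Omega(\delta_k^2+|w|^2)^{1/2}\geq\int_\Omega(\delta_k^2+|m_{\delta_k}|^2)^{1/2}+\int_\Omega u_{\delta_k}\cdot(w-m_{\delta_k})$, with $\int_\Omega(\delta_k^2+|m_{\delta_k}|^2)^{1/2}\geq\int_\Omega|m_{\delta_k}|\to\calR(m)$ and $\int_\Omega u_{\delta_k}\cdot m_{\delta_k}\to\int_\Omega u\cdot m$ (weak-$*$ $L^\infty$ tested against strongly convergent $L^1$), gives $\calR(w)\geq\calR(m)+\int_\Omega u\cdot(w-m)$ for all $w$, i.e.\ $u\in\partial\calR(m)$ in the sense of \eqref{subdiffR}; this is the differential inclusion \eqref{m-incl}. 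The energy inequality \eqref{energy_ineq} follows by passing to the limit in the regularized one: the right-hand side converges by Step 2, and $\liminf_k\E_{\delta_k}(m_{\delta_k}(t))\geq\E(m(t))$ for a.e.\ $t$ (hence for all $t$, by lower semicontinuity of $t\mapsto\E(m(t))$) by weak lower semicontinuity of the quadratic terms $\Norm{\grad m(t)}_{L^2}^2$, $\Norm{m\cdot\grad p[m](t)}_{L^2}^2$, $\Norm{\grad p[m](t)}_{L^2}^2$ and Fatou's lemma for $(\delta_k^2+|m_{\delta_k}(t)|^2)^\gamma\geq|m_{\delta_k}(t)|^{2\gamma}$, the dissipation term being weakly lower semicontinuous in $L^2$. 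The principal obstacle is the case $\gamma=1/2$: identifying the weak-$*$ limit of the regularized algebraic term as an element of $\partial\calR(m)$ hinges on the \emph{strong} $L^2$ compactness of $(m_{\delta_k})$, needed to take the limit in the bilinear quantity $\int_\Omega u_{\delta_k}\cdot m_{\delta_k}$; this, together with keeping the bound on $\E_\delta(m^0)$ uniform in $\delta$, is the new content relative to \cite{HMP15}.
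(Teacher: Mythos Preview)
Your overall strategy---regularize, obtain uniform energy estimates, extract compactness, pass to the limit---is correct and your treatment of the limit in Step~3 (in particular the subgradient argument identifying the weak-$*$ limit as an element of $\partial\calR(m)$ when $\gamma=1/2$) is sound and essentially equivalent to the paper's Lemma~\ref{lem:m-algebraic}. But your route diverges from the paper's in Step~1, and there is a gap there.

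The paper does \emph{not} regularize the algebraic term. Following \cite{HMP15}, it regularizes the \emph{coupling} via the mollified system \eqref{eq_pert1}--\eqref{eq_pert2}, in which $(m\cdot\grad p)$ is replaced by $(m\cdot\grad p)\ast\eta_\eps$; this puts the forcing in the auxiliary parabolic problem into $L^2((0,T)\times\Omega)$, which is exactly what the Leray--Schauder scheme of \cite{HMP15} needs. The new content for $1/2\leq\gamma<1$ is then concentrated in Lemma~\ref{lem:m-aux}: the auxiliary problem \eqref{m-aux} (or the inclusion \eqref{aux-incl} for $\gamma=1/2$) is solved by recognizing $\mathcal I_\gamma(m)=\frac{D^2}{2}\int|\grad m|^2+\frac{1}{2\gamma}\int|m|^{2\gamma}$ as proper, convex and l.s.c.\ for $\gamma\geq1/2$, so that $\partial\mathcal I_\gamma$ is maximal monotone; the $H^2$ estimate comes from testing by $\laplace m$ and the key pointwise fact that $D\phi(m)=|m|^{2(\gamma-1)}\bigl(2(\gamma-1)\frac{m\otimes m}{|m|^2}+I\bigr)\geq0$ precisely when $\gamma\geq1/2$. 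No smoothing of $|m|^{2(\gamma-1)}m$ is needed.

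Your alternative---smoothing the algebraic term by $g_\delta(m)=(\delta^2+|m|^2)^{\gamma-1}m$---is a legitimate idea, but you cannot then also claim that Leray--Schauder goes through ``exactly as in \cite{HMP15}'' while leaving the coupling unmollified. For $\bar m\in C([0,T];L^2(\Omega)^d)$ the product $(\bar m\cdot\grad p[\bar m])\grad p[\bar m]$ lies only in $L^\infty(0,T;L^1(\Omega))$, and the compactness of the fixed-point map into $C([0,T];L^2)$ that \cite{HMP15} obtains relies precisely on the mollification upgrading this forcing to $L^2$. Either you must also introduce the $\eta_\eps$-regularization of the coupling (and then manage two parameters $\eps,\delta$, which works but is longer than the paper's single-parameter argument), or you must supply a genuinely different compactness argument for the unmollified map---which you have not done. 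The paper's route avoids this by never touching the algebraic term and using monotone-operator theory instead.
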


The proof proceeds along the lines of Section 2 of \cite{HMP15}, i.e.,
for $\gamma>1/2$ and $\eps>0$ we consider the regularized system
\(
    -\div [\grad p + m(m\cdot\grad p)\ast\eta_\eps ] &=& S,   \label{eq_pert1}\\
    \part{m}{t} - D^2\laplace m - c^2[(m\cdot\grad p)\ast\eta_\eps]\grad p + |m|^{2(\gamma-1)}m &=& 0, \label{eq_pert2}
\)
with $(\eta_\eps)_{\eps>0}$ the $d$-dimensional heat kernel $\eta_\eps(x) = (4\pi\eps)^{-d/2} \exp(-|x|^2/4\eps)$ and $m$, $p$ are extended by $0$ outside $\Omega$ so that the convolution is well defined.
For $\gamma=1/2$, equation \eqref{eq_pert2} has to be substituted by the differential inclusion
\[ \label{eq_pert2-incl}
    \part{m}{t} - D^2\laplace m - c^2[(m\cdot\grad p)\ast\eta_\eps]\grad p \in -\partial \mathcal{R}(m).
\]
Weak solutions of the regularized system are constructed by an application
of the Leray-Schauder fixed point theorem as in Section 2 in \cite{HMP15},
the only change that needs to be done is a slight modification of the proof of Lemma 3 in \cite{HMP15}.
In particular, for $\gamma>1/2$ we construct weak solutions of the auxiliary problem
\(   \label{m-aux}
    \part{m}{t} - D^2\laplace m - f = -|m|^{2(\gamma-1)}m,
\)
subject to the initial and boundary conditions
\(   \label{m-aux-IC-BC}
   m(t=0)=m^0\quad\mbox{in }\Omega,\qquad m=0\quad\mbox{on }\partial\Omega.
\)
Again, for $\gamma=1/2$ we have to consider the following differential inclusion instead,
\(   \label{aux-incl}
    \part{m}{t} - D^2\laplace m -f \in -\partial\mathcal{R}(m).
\)
In the subsequent lemma we construct the so-called \emph{slow solution} of \eqref{aux-incl},
which is the unique weak solution of the PDE
\( \label{m-slow}
    \part{m}{t} - D^2\laplace m - f = -r(m)
\)
with
\begin{equation}    \label{r}
   [r(m)](x) =
   \left\{ \begin{array}{ll}
    m(x)/ |m(x)|  &\mbox{ when } m(x) \neq 0, \\ 0  &\mbox{ when } m(x)=0.
   \end{array} \right.  
\end{equation}

\begin{lemma}[Extension of Lemma 3 in \cite{HMP15}]\label{lem:m-aux}
For every $D>0$, $\gamma> 1/2$, $T>0$ and $f\in L^2((0,T)\times\Omega)^d$, the problem \eqref{m-aux}--\eqref{m-aux-IC-BC}
with $m^0\in H_0^1(\Omega)^d$ admits a unique weak solution
$m\in L^\infty(0,T; H_0^1(\Omega))^d \cap L^2(0,T; H^2(\Omega))^d \cap L^\infty(0,T; L^{2\gamma}(\Omega))^d$
with $\partial_t m \in L^2((0,T)\times\Omega)^d$
and the estimates hold
\(
   \Norm{m}_{L^\infty(0,T; H_0^1(\Omega))} &\leq& C  \left( \Norm{f}_{L^2((0,T)\times\Omega)} + \Norm{m^0}_{H_0^1(\Omega)} \right), \label{m-aux-est4a}\\
   \Norm{\laplace m}_{L^2((0,T)\times\Omega)} &\leq& C \left( \Norm{f}_{L^2((0,T)\times\Omega)} + \Norm{m^0}_{H^1_0(\Omega)} \right). \label{m-aux-est4b}
\)
The same statement is true for the problem \eqref{m-slow}--\eqref{r}, \eqref{m-aux-IC-BC} in the case $\gamma=1/2$.
\end{lemma}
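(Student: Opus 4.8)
\medskip
\noindent\textbf{Proof strategy.}
I would exploit that \eqref{m-aux}--\eqref{m-aux-IC-BC} is, up to the source $f$, the $L^2(\Omega)^d$-gradient flow of the convex functional
$\Phi(m):=\tfrac{D^2}{2}\int_\Omega|\grad m|^2\d x+\tfrac{1}{2\gamma}\int_\Omega|m|^{2\gamma}\d x$
(with the second term replaced by $\mathcal{R}(m)=\int_\Omega|m|\d x$ when $\gamma=1/2$): the equation reads $\partial_t m+\partial\Phi(m)\ni f$. On $L^2(\Omega)^d$ this $\Phi$ is proper, convex and lower semicontinuous with effective domain $H_0^1(\Omega)^d$ --- here $d\le3$ and $2\gamma<2$ enter, so that $H_0^1\hookrightarrow L^6\subset L^{2\gamma}$ --- and, by the Moreau--Rockafellar sum rule together with elliptic regularity on the smooth domain $\Omega$ and the embedding $H^2\hookrightarrow L^\infty$, its subdifferential is the operator $m\mapsto-D^2\laplace m+|m|^{2(\gamma-1)}m$ (resp.\ $-D^2\laplace m+\partial\mathcal{R}(m)$) with domain contained in $H^2(\Omega)^d\cap H_0^1(\Omega)^d$. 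Brezis' theory of evolution equations governed by subdifferentials then already gives, for $m^0\in D(\Phi)=H_0^1(\Omega)^d$, existence of a unique $m\in C([0,T];L^2(\Omega)^d)$ with $\partial_t m\in L^2((0,T)\times\Omega)^d$ and $m(t)\in H^2(\Omega)^d$ for a.e.\ $t$, solving the equation a.e.; uniqueness also follows directly from monotonicity of $\partial\Phi$ by testing the difference of two solutions with itself, and for $\gamma=1/2$ one uses in addition that the single-valued map $r(\cdot)$ of \eqref{r} is itself monotone on $\RR^d$. The quantitative bounds \eqref{m-aux-est4a}--\eqref{m-aux-est4b}, as well as the identification of the nonlinear term when $\gamma=1/2$, I would obtain by a direct regularization.

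Since for $1/2\le\gamma<1$ the nonlinearity $|m|^{2(\gamma-1)}m$ is only sub-$C^1$ at the origin (its Jacobian grows like $|m|^{2(\gamma-1)}$), and for $\gamma=1/2$ it is multivalued, I would replace it by $g_\delta:=\grad W_\delta$ with $W_\delta$ smooth, convex and of bounded Hessian --- for instance $W_\delta(m)=\tfrac{1}{2\gamma}(|m|^2+\delta^2)^\gamma$ for $\gamma>1/2$ and $W_\delta(m)=(|m|^2+\delta^2)^{1/2}$ for $\gamma=1/2$. For fixed $\delta>0$ this is a semilinear parabolic system with globally Lipschitz monotone nonlinearity, hence globally well posed (e.g.\ via a spectral Galerkin scheme), with $m_\delta\in L^2(0,T;H^2)^d\cap C([0,T];H_0^1)^d$ and $\partial_t m_\delta\in L^2((0,T)\times\Omega)^d$. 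The decisive a priori bound is obtained by testing with $-\laplace m_\delta$: the nonlinear contribution becomes $\int_\Omega\sum_j(\partial_j m_\delta)^{\top}\grad^2 W_\delta(m_\delta)\,\partial_j m_\delta\d x\ge0$ by convexity of $W_\delta$, and absorbing $\int_\Omega f\cdot\laplace m_\delta\d x$ by Young's inequality yields, after integration in time, $\Norm{\grad m_\delta(t)}_{L^2(\Omega)}^2+D^2\int_0^t\Norm{\laplace m_\delta}_{L^2(\Omega)}^2\d s\le\Norm{\grad m^0}_{L^2(\Omega)}^2+D^{-2}\int_0^t\Norm{f}_{L^2(\Omega)}^2\d s$, uniformly in $\delta$. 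With Poincar\'e this already gives \eqref{m-aux-est4a} and \eqref{m-aux-est4b}; reading $\partial_t m_\delta=D^2\laplace m_\delta+f-g_\delta(m_\delta)$ and using $|g_\delta(m_\delta)|\le|m_\delta|^{2\gamma-1}$ (resp.\ $\le1$), which is bounded in $L^2((0,T)\times\Omega)$ through $H_0^1\hookrightarrow L^6$, bounds $\partial_t m_\delta$ there; and $m_\delta\in L^\infty(0,T;H_0^1)\hookrightarrow L^\infty(0,T;L^{2\gamma})$, $m_\delta\in L^2(0,T;H^2)$ by elliptic regularity --- all uniformly in $\delta$.

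Now pass $\delta\to0$. By the Aubin--Lions lemma a subsequence satisfies $m_\delta\to m$ strongly in $L^2(0,T;H_0^1)^d$ and a.e.\ in $(0,T)\times\Omega$, $m_\delta\rightharpoonup m$ in $L^2(0,T;H^2)^d$ and $\partial_t m_\delta\rightharpoonup\partial_t m$ in $L^2((0,T)\times\Omega)^d$. For $\gamma>1/2$ the term $g_\delta(m_\delta)\to|m|^{2(\gamma-1)}m$ a.e.\ and is dominated by $|m_\delta|^{2\gamma-1}$, which is bounded in $L^{2\gamma/(2\gamma-1)}((0,T)\times\Omega)$ with exponent strictly larger than $2$; hence it converges in $L^2((0,T)\times\Omega)^d$, and passing to the limit in the regularized equation produces a weak solution of \eqref{m-aux}, which inherits \eqref{m-aux-est4a}--\eqref{m-aux-est4b} by weak lower semicontinuity. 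Together with the uniqueness from the first paragraph this settles the case $\gamma>1/2$.

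The hardest point is $\gamma=1/2$. Here $|g_\delta(m_\delta)|\le1$, so along a further subsequence $g_\delta(m_\delta)$ converges weakly-$*$ in $L^\infty((0,T)\times\Omega)^d$ to some $\zeta$ with $\zeta=m/|m|$ a.e.\ on $\{m\neq0\}$ (by the a.e.\ convergence of $m_\delta$) and $|\zeta|\le1$, so that $m$ solves the differential inclusion \eqref{aux-incl}. To show that this limit is exactly the \emph{slow} solution --- i.e.\ to determine $\zeta$ on the level set $\{m=0\}$ so that \eqref{m-slow}--\eqref{r} holds --- I would invoke Stampacchia's lemma: since $m\in H^1((0,T)\times\Omega)^d$ and $m(t)\in H^2(\Omega)^d$ for a.e.\ $t$, one has $\partial_t m=0$ and $\laplace m=0$ a.e.\ on $\{m=0\}$, so the equation degenerates there, which pins down the nonlinear term on that set (consistently with the corresponding structure of $f$ on $\{m=0\}$). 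Making this identification rigorous is the main obstacle; once it is settled, uniqueness of the slow solution again follows from monotonicity of $r(\cdot)$, and \eqref{m-aux-est4a}--\eqref{m-aux-est4b} pass to the limit exactly as before.
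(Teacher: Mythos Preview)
Your proposal follows essentially the same line as the paper: both cast the problem as the $L^2$-gradient flow of the convex functional $\mathcal{I}_\gamma$ (your $\Phi$), invoke maximal monotone/Brezis theory for existence and uniqueness, and obtain \eqref{m-aux-est4a}--\eqref{m-aux-est4b} by testing with $-\laplace m$ and using that the Jacobian of $|m|^{2(\gamma-1)}m$ is nonnegative for $\gamma\ge 1/2$, so the nonlinear contribution has the right sign. Your smooth regularization $W_\delta$ is one legitimate way to justify this formal test; the paper simply writes ``formally, but easily justifiable'' and computes $D\phi(m)=|m|^{2(\gamma-1)}\bigl(2(\gamma-1)\tfrac{m}{|m|}\otimes\tfrac{m}{|m|}+I\bigr)\ge0$ directly, without introducing an approximation layer.

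The only genuine divergence is the treatment of $\gamma=1/2$. The paper does not regularize there either: it observes that the abstract theory for $\partial_t m+\partial\mathcal{I}_{1/2}(m)\ni f$ already produces the \emph{slow} solution --- the unique strong solution, whose velocity is automatically given by the minimal-norm selection from the subdifferential --- and identifies this with \eqref{m-slow}--\eqref{r}. Your route tries instead to recover the selection $r(m)$ as a limit of the regularizations via Stampacchia, and you are right to flag this as the hard point: your computation gives $\partial_t m=\laplace m=0$ a.e.\ on $\{m=0\}$, and plugging this into the limit equation forces $\zeta=f$ there rather than $\zeta=0$, so the identification with $r(m)$ on the zero set does not follow from that argument. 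The paper simply sidesteps this by appealing to the minimal-norm selection built into the abstract evolution theory, which is the cleaner and shorter way to close the $\gamma=1/2$ case.
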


\begin{proof}
In both cases (i.e. $\gamma\geq 1/2$) we construct the solution of the differential inclusion
\(   \label{proof-incl}
    \part{m}{t} + \partial \mathcal{I}_\gamma(m) \ni f
\)
with the functional $\mathcal{I}_\gamma: L^2(\Omega) \to [0,+\infty]$ given by
\[
    \mathcal{I}_\gamma(m) := \frac{D^2}2 \int_\Omega |\grad m|^2 \d x + \frac{1}{2\gamma} \int_\Omega |m|^{2\gamma}\d x,\qquad\mbox{if } m\in H^1_0(\Omega)\cap L^{2\gamma}(\Omega),
\]
and $\mathcal{I}_\gamma(m):= +\infty$ otherwise.
It can be easily checked that for $\gamma\geq 1/2$ the functional $\mathcal{I}_\gamma$ is proper with dense domain,
strictly convex and lower semicontinuous on $H^1_0(\Omega)$.
By the Rockafellar theorem \cite{Phelps}, the Fr\'echet subdifferential $\partial\mathcal{I}_\gamma(m)$
is a maximal monotone operator and the standard theory \cite{Aubin-Cellina} then provides the existence
of a unique solution $m\in L^2(0,T; H^1_0(\Omega))^d \cap L^{2\gamma}((0,T)\times\Omega)^d$ of \eqref{proof-incl}.
Clearly, for $\gamma>1/2$, $m$ is the unique weak solution of \eqref{m-aux}--\eqref{m-aux-IC-BC}.
For $\gamma=1/2$, $m$ is the so-called \emph{slow solution}, meaning that
the velocity $\tot{m}{t}$ is the element of minimal norm
in $\partial\mathcal{R}(m)$, i.e.,
\[
    \tot{m}{t} = - \mbox{argmin} \{ \Norm{u}_{H^1_0(\Omega)}; u \in \partial\mathcal{R}(m) \}.
\]
Therefore, $m$ is a weak solution of \eqref{m-slow}--\eqref{r}.

To prove the higher regularity estimates \eqref{m-aux-est4a}, \eqref{m-aux-est4b}, we use
(formally, but easily justifiable) $\laplace m$ as a test function, which after integration by parts leads to
\(    \label{testByLaplace}
    \frac12 \tot{}{t} \int_\Omega |\grad m|^2 \d x + D^2\int_\Omega |\laplace m|^2 \d x
       - \int_\Omega (|m|^{2(\gamma-1)}m)\cdot\laplace m \d x = \int_\Omega f\cdot\laplace m \d x.
\)
Then, denoting $\phi(m):=|m|^{2(\gamma-1)}m$, we have
\(   \label{nonnegativeTerm}
      - \int_\Omega (|m|^{2(\gamma-1)}m)\cdot\laplace m \d x = - \int_\Omega \phi(m)\cdot\laplace m \d x = \int_\Omega \grad m\cdot D\phi(m)\grad m \d x
\)
with
\[
   D\phi(m) = |m|^{2(\gamma-1)}\left( 2(\gamma-1)\frac{m}{|m|}\otimes\frac{m}{|m|} + I\right).
\]
Clearly, $D\phi(m)$ is a nonnegative matrix for $\gamma\geq 1/2$,
so that the term \eqref{nonnegativeTerm} is nonnegative.
The identity \eqref{testByLaplace} together with a standard density argument
gives directly the required regularity and the estimates \eqref{m-aux-est4a}, \eqref{m-aux-est4b}.
\end{proof}

The rest of the proof of existence of solutions of the regularized problem
\eqref{eq_pert1}--\eqref{eq_pert2} is identical to Section 2 of \cite{HMP15}.
For the limit $\eps\to 0$, we only need to provide the following result
for the case $\gamma=1/2$.

\begin{lemma}\label{lem:m-algebraic}
Let $m^k\to m$ strongly in $L^1((0,T)\times\Omega)$ as $k\to\infty$,
and denote $h^k:= r(m^k)$ with $r$ given by \eqref{r}.
%Then $h^k:=m^k/|m^k|$ converges strongly on $\{m\neq 0\}$ to $h:=m/|m|$ in any $L^p$, $p<\infty$.
%On $\{m=0\}$ we only know that the limit $h$ satisfies $|h|\leq 1$.
Then there exists $h\in\partial\mathcal{R}(m)$ such that, for a whileuence,
\[
    h^k \rightharpoonup^* h \qquad\mbox{weakly* in } L^\infty((0,T)\times\Omega) \mbox{ as } k\to\infty.
\]
\end{lemma}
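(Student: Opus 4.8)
\noindent\emph{Proof strategy.}
The plan is a routine weak-$*$ compactness argument; the only genuine subtlety is to treat the sets $\{m\neq 0\}$ and $\{m=0\}$ separately. First, since $|r(v)(x)|\le 1$ pointwise for every $v$, the sequence $(h^k)$ is bounded by $1$ in $L^\infty((0,T)\times\Omega)^d$. As $(0,T)\times\Omega$ has finite measure, $L^\infty$ is the dual of the separable space $L^1$, so the Banach--Alaoglu theorem gives a subsequence with $h^k\rightharpoonup^* h$ for some $h$, and weak-$*$ closedness of the closed unit ball yields $|h(t,x)|\le 1$ for a.e.\ $(t,x)$. Next, from the strong convergence $m^k\to m$ in $L^1((0,T)\times\Omega)^d$ I would extract a further subsequence along which $m^k\to m$ a.e.; passing to this further subsequence does not disturb the weak-$*$ convergence of $h^k$.

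The heart of the matter is to identify $h$ with $m/|m|$ on the set $A:=\{(t,x): m(t,x)\neq 0\}$. Since the map $v\mapsto v/|v|$ is continuous on $\R^d\setminus\{0\}$, for a.e.\ $(t,x)\in A$ we have $m^k(t,x)\neq 0$ for $k$ large and hence $h^k(t,x)=m^k(t,x)/|m^k(t,x)|\to m(t,x)/|m(t,x)|$. Thus $\mathbf 1_A h^k\to \mathbf 1_A\, m/|m|$ a.e., and since these functions are bounded by $1$ on the finite-measure set $A$, dominated convergence upgrades this to strong convergence in $L^2(A)^d$. Testing against an arbitrary $\psi\in L^2((0,T)\times\Omega)^d$ supported in $A$ (which also lies in $L^1$ because the domain has finite measure), the weak-$*$ convergence gives $\int h^k\cdot\psi\to\int h\cdot\psi$ while the strong $L^2$ convergence gives $\int h^k\cdot\psi\to\int (m/|m|)\cdot\psi$, so $h=m/|m|$ a.e.\ on $A$. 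Together with $|h|\le 1$ a.e.\ on $\{m=0\}$ from the first step, this is exactly the description of $\partial\mathcal R(m)$ recorded in \eqref{subdiffR}, so $h\in\partial\mathcal R(m)$.

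I expect the only point requiring care to be the observation that nothing can be said pointwise about $h^k$ on the zero set of $m$: there the unit vectors $m^k/|m^k|$ may oscillate (e.g.\ alternate between $\pm e_1$), so the a.e.\ convergence used on $A$ has no analogue on $\{m=0\}$. This turns out to be harmless, since on $\{m=0\}$ the subdifferential $\partial\mathcal R$ imposes only the bound $|h|\le 1$, which the weak-$*$ limit inherits automatically from the uniform bound on $(h^k)$. No regularity of $m$ beyond measurability, and no structure of the underlying system, enters the argument.
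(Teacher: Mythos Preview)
Your proof is correct and follows essentially the same approach as the paper's: weak-$*$ compactness from the uniform bound $|h^k|\le 1$, extraction of an a.e.\ convergent subsequence of $m^k$, identification $h=m/|m|$ on $\{m\neq 0\}$ via continuity of $v\mapsto v/|v|$ away from the origin, and the trivial bound $|h|\le 1$ on $\{m=0\}$. In fact you spell out in more detail than the paper does why the a.e.\ pointwise limit on $\{m\neq 0\}$ must coincide with the weak-$*$ limit (via dominated convergence and testing), a step the paper leaves implicit.
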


\begproof
Because $h^k=r(m^k)$ is uniformly bounded in $L^\infty((0,T)\times\Omega)$, there exists a
subsequence, still denoted by $h^k$, converging to $h\in L^\infty((0,T)\times\Omega)$ weakly*.
Due to the strong convergence of $m^k$ in $L^1$, there exists a subsequence converging almost everywhere to $m$.
Consequently, $h^k$ converges to $m/|m|$ almost everywhere on $\{m\neq 0\}$.
On $\{m=0\}$, we have $|h|\leq 1$, so that $h\in\partial\mathcal{R}(m)$ defined by \eqref{subdiffR}.
\endproof

Note that in the case $\gamma=1/2$, due to Lemma \ref{lem:m-algebraic},
we only obtain weak solutions of the system \eqref{eq1}, \eqref{m-incl}.
We conjecture that $m$ is in fact a slow solution of \eqref{m-incl}, i.e.,
that it solves
\[
    \partial_t m - D^2\laplace m - c^2(m\cdot\grad p[m])\grad p[m] = r(m)
\]
with $r(m)$ given by \eqref{r}. %, however, we are not able to provide a proof.

\begin{remark}
The proof of local in time existence of mild solutions
(Theorem 3 of \cite{HMP15}) carries over to the case $1/2 < \gamma < 1$
without modifications. However, the proof of uniqueness of mild solutions
by a contraction mapping argument requires $\gamma\geq 1$
and it is not clear how to adapt it for values of $\gamma$ less than one.
\end{remark}

\section{Analysis in the 1d setting}\label{S2}
%In this section we provide several results 
Much more can be proved about the system \eqref{eq1}--\eqref{eq2}
in the spatially one dimensional setting. Then, and without loss of generality, we can consider it on the interval $\Omega:=(0,1)$.
The system reads
\(
   - \partial_x \bigl(\partial_x p + m^2 \partial_x p \bigr) &=& S,  \label{eq1D1}\\
   \partial_t m - D^2 \partial_{xx}^2 m - c^2(\partial_x p)^2 m + |m|^{2(\gamma-1)}m &=& 0,  \label{eq1D2}
\)
Additionally, throughout this section we assume $S>0$ a.e. on $(0,1$),
and for mathematical convenience we prescribe the mixed boundary conditions for $p$,
\[   
    \partial_x p(0) = 0, \qquad p(1) = 0,
\]
and homogeneous Neumann boundary condition for $m$.
Then, integrating \eqref{eq1D1} with respect to $x$, we obtain
\[
    (1+m^2)\partial_x p = - \int_0^x S(y) \d y.
\]
Denoting $B(x) := \int_0^x S(y)\d y$, we have
\(   \label{partx_p}
    \partial_x p = - \frac{B(x)}{1+m^2},
\)
so that the system \eqref{eq1D1}--\eqref{eq1D2} is rewritten as
\(   \label{meq}
    \partial_t m - D^2 \partial_{xx}^2 m = \left( \frac{c^2 B(x)^2}{(1+m^2)^2} - |m|^{2(\gamma-1)} \right) m.
\)

%---------------------------------------------------------
\subsection{Extinction of solutions for $-1 \leq \gamma\leq 1$ and small sources}\label{sec:finite_time_breakdown}
We show that if the source term $S$ is small enough in a suitable sense,
then solutions of \eqref{meq} converge to zero, either in infinite time for $1/2 \leq \gamma \leq 1$,
or in finite time for $-1 \leq \gamma < 1/2$.
In the latter case it means that the solutions can only exist on finite time intervals,
since the algebraic term $|m|^{2(\gamma-1)}m$ is singular at $m=0$
and solutions of \eqref{meq} cannot be extended
beyond the point where they reach zero.
%Consequently, we claim that the system \eqref{eq1}--\eqref{eq2}
%only makes sense for $\gamma\geq 1/2$.

\begin{lemma}\label{lem:extFT}
Let $D\geq 0$, $-1 \leq \gamma \leq 1$, $m^0\in L^\infty(0,1)$ and $c\Norm{B}_{L^\infty(0,1)} < Z_\gamma$ with
 \(   \label{Z_gamma}
     Z_\gamma := \frac{2}{\gamma+1} \left(\frac{1-\gamma}{1+\gamma} \right)^\frac{\gamma-1}{2}, \qquad
     Z_1 :=1.
 \)
Without loss of generality, let $\inf_{x\in(0,1)} |m^0(x)|>0$, and $m$ be a weak solution
of \eqref{meq} with homogeneous Neumann boundary conditions and $m(t=0)=m^0$.

Then, for $1/2 \leq \gamma \leq  1$, $\Norm{m}_{L^1(0,1)}$ converges to zero as $t\to\infty$.
For $-1 \leq \gamma < 1/2$ there exists a finite break-down time $T_0 > 0$
such that $\inf_{x\in(0,1)} |m(T_0,x)| = 0$.
\end{lemma}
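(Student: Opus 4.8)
The plan is to track $\Norm{m(t)}_{L^1(0,1)}$ and to use the smallness hypothesis to make the reaction term in \eqref{meq} always point towards $m=0$. First I would write the right-hand side of \eqref{meq} as $g(|m|)\,m$ with
\[
   g(s) := \frac{c^2 B(x)^2}{(1+s^2)^2} - s^{2(\gamma-1)} = \frac{c^2 B(x)^2 - \psi(s)}{(1+s^2)^2}, \qquad \psi(s):= s^{2(\gamma-1)}(1+s^2)^2 .
\]
A one-variable calculus exercise ($\partial_s\log\psi=0$ forces $s_\ast^2=(1-\gamma)/(1+\gamma)$, and $\psi\to+\infty$ at both ends of $(0,\infty)$ for $-1<\gamma<1$, while $\psi$ is increasing with infimum $1$ for $\gamma=1$) identifies $\inf_{s>0}\psi(s)=Z_\gamma^2$ with $Z_\gamma$ as in \eqref{Z_gamma} (the endpoints $\gamma=\pm1$ arising as limits). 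Hence $c\Norm{B}_{L^\infty(0,1)}<Z_\gamma$ says exactly that $c^2 B(x)^2<\psi(s)$ for every $s>0$ and $x\in(0,1)$, so $g(s)<0$ for all $s>0$, and moreover
\[
   |g(s)| \ge \frac{Z_\gamma^2 - c^2\Norm{B}_{L^\infty(0,1)}^2}{(1+s^2)^2} =: \frac{\delta_0}{(1+s^2)^2}, \qquad \delta_0 > 0 .
\]
Since $g(|m|)m$ has the sign of $-m$, a maximum-principle argument (a pointwise comparison in $t$ when $D=0$, a Stampacchia truncation when $D>0$) gives $\Norm{m(t)}_{L^\infty(0,1)}\le\Norm{m^0}_{L^\infty(0,1)}=:M$ throughout the existence interval.

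Next I would test \eqref{meq} with $\sign m$ --- rigorously, with $m/\sqrt{m^2+\eps^2}$ in place of $\sign m$ and $\eps\to0$. By Kato's inequality and the homogeneous Neumann condition, $\int_0^1(\sign m)\,\partial_{xx}m\,\d x\le\int_0^1\partial_{xx}|m|\,\d x=0$, so that
\[
   \tot{}{t}\Norm{m(t)}_{L^1(0,1)} \le \int_0^1 g(|m|)\,|m|\,\d x = -\int_0^1 |g(|m|)|\,|m|\,\d x \le 0 .
\]
(For $\gamma=1/2$ one replaces $|m|^{2(\gamma-1)}m$ by a selection $\xi\in\partial\mathcal{R}(m)$; since $(\sign m)\xi=1$ on $\{m\neq0\}$ and $|m|=0$ on $\{m=0\}$, the same inequality results.)

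For $1/2\le\gamma\le1$ the $L^\infty$-bound gives $|g(s)|\ge\delta_0/(1+M^2)^2=:\delta>0$ for $s\in(0,M]$, hence $|g(|m|)|\,|m|\ge\delta|m|$ and $\tot{}{t}\Norm{m(t)}_{L^1(0,1)}\le-\delta\Norm{m(t)}_{L^1(0,1)}$; Gr\"onwall then yields $\Norm{m(t)}_{L^1(0,1)}\le e^{-\delta t}\Norm{m^0}_{L^1(0,1)}\to0$. For $-1\le\gamma<1/2$ I would instead exploit that $2\gamma-1<0$, so $|g(s)|\,s=s^{2\gamma-1}-c^2B(x)^2 s/(1+s^2)^2\to+\infty$ as $s\to0^+$; being continuous and strictly positive on $(0,M]$ and blowing up at $0^+$, the map $s\mapsto|g(s)|s$ has $\nu:=\inf_{0<s\le M}|g(s)|s>0$. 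If the solution existed on all of $[0,\infty)$ with $\inf_x|m(t,x)|>0$ for every $t$ --- the natural maximal existence interval, since \eqref{meq} cannot be continued past the first time $m$ vanishes somewhere --- then $|g(|m|)|\,|m|\ge\nu$ a.e.\ and $\tot{}{t}\Norm{m(t)}_{L^1(0,1)}\le-\nu$, so $\Norm{m(t)}_{L^1(0,1)}\le\Norm{m^0}_{L^1(0,1)}-\nu t$ would become negative for $t>\Norm{m^0}_{L^1(0,1)}/\nu$, a contradiction; hence there is a finite $T_0\le\Norm{m^0}_{L^1(0,1)}/\nu$ with $\inf_x|m(T_0,x)|=0$.

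The step I expect to be most delicate is making the $L^1$ differential inequality rigorous for merely weak (and, at $\gamma=1/2$, differential-inclusion) solutions --- the Kato-inequality/sign-test argument and the bookkeeping of the subdifferential selection on $\{m=0\}$ --- together with fixing the notion of maximal existence interval that makes the finite-time break-down statement precise for $\gamma<1/2$. By contrast, the calculus identification $\inf_{s>0}\psi(s)=Z_\gamma^2$, the positivity of $\delta_0$ and $\nu$, and the maximum-principle bound $\Norm{m}_{L^\infty}\le M$ are routine.
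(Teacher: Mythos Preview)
Your proposal is correct and follows essentially the same route as the paper: identify $\inf_{s>0}\psi(s)=Z_\gamma^2$, use the maximum principle for the $L^\infty$ bound, test \eqref{meq} with $\sign m$ and invoke Kato's inequality with the Neumann condition, then split into the Gr\"onwall case $1/2\le\gamma\le1$ and the constant-rate decay case $-1\le\gamma<1/2$. If anything, your write-up is slightly more careful than the paper's about the regularization of $\sign m$ and the $\gamma=1/2$ subdifferential bookkeeping.
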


\begin{proof}
 For $m>0$ we define the positive function
 \( \label{h_gamma}
    h_\gamma(m) := m^{2(\gamma -1)} \;  (1+ m^2)^2.
 \)
 It can be easily shown that for all $-1 \leq \gamma \leq  1$,
 $$
     \inf_{0 < m < \infty} h_\gamma(m) = Z_\gamma^2 > 0.
 $$
 Consequently, the assumption $c \|B\|_{L^\infty(0,1)} <  Z_\gamma$ implies
 $$
   \frac{c^2 \|B\|^2_{L^\infty(0,1)}}{(1+m^2)^2} - |m|^{2(\gamma-1)} <0 \qquad\mbox{for all } m\in\R.
 $$
As a consequence of the maximum principle for \eqref{meq}, we have the a-priori bound
 $$
 \| m(t)\|_{L^\infty(0,1)} \leq M:=  \| m^0\|_{L^\infty(0,1)}  \qquad\mbox{for all } t\geq 0.
 $$
Now, we can conclude that there exists a $\delta > 0$ such that 
\(  \label{delta_ineq}
  \frac{c^2 \|B\|^2_{L^\infty(0,1)}}{(1+m^2)^2} - |m|^{2(\gamma-1)} < -\delta \qquad \mbox{for }  |m| \leq M.
\)
As long as $\inf_{x\in(0,1)} |m(t)|>0$, we can multiply \eqref{meq} with $\sign(m)$ and integrate over $\Omega=(0,1)$,
\[
    \tot{}{t} \int_0^1 |m| \d x = D^2 \int_0^1 (\partial^2_{xx} m)\,\sign(m) \d x 
       + \int_0^1 \left( \frac{c^2B^2  |m|  }{(1+m^2)^2} - |m|^{2\gamma-1} \right)\d x.
\]
On the one hand, the Kato inequality \cite{Kato} for the first term of the right-hand side yields
\[
   D^2 \int_0^1 (\partial^2_{xx} m)\,\sign(m) \d x \leq
    D^2 \int_0^1 \partial^2_{xx} |m| \d x =
    D^2 \Bigl[ \partial_x |m| \Bigr]_{x=0}^1 =
    D^2 \Bigl[ (\partial_{x} m)\,\sign(m) \Bigr]_{x=0}^1 = 0,
\]
where the boundary term vanishes due to the homogeneous Neumann boundary conditions.
Now, for $1/2 \leq \gamma \leq  1$, \eqref{delta_ineq} implies
\[
   \int_0^1 \left( \frac{c^2B^2  |m|  }{(1+m^2)^2} - |m|^{2\gamma-1} \right)\d x < -\delta \int_0^1 |m| \d x,
\]
so that
\[
    \tot{}{t} \int_0^1 |m| \d x < -\delta \int_0^1 |m| \d x
\]
and by Gronwall lemma we conclude exponential convergence of $\Norm{m}_{L^1(0,1)}$
(or, due to the maximum principle, any $L^q$-norm of $m$ with $q<\infty$)
to zero as $t\to\infty$.

For $-1 \leq \gamma < 1/2$, from \eqref{delta_ineq} and the behaviour
near $m\approx 0$ it follows that there exists a $\widetilde\delta > 0$ such that
\[
   \frac{c^2\|B^2\|_{L^\infty(0,1)}   |m|  }{(1+m^2)^2} - |m|^{2\gamma-1} < - \widetilde \delta \qquad\mbox{for } |m| \leq M.
\]
Therefore, we have
\[ 
    \tot{}{t} \int_0^1 |m| \d x \leq - \widetilde \delta
\]
and conclude the result with $T_0 < \Norm{m^0}_{L^1(0,1)}/ \widetilde \delta $. 
\end{proof}

\begin{remark}
For $\gamma < -1$, there exists a unique positive solution $m_b$ of the equation
\[
    \frac{c^2B^2}{(1+m^2)^2} - |m|^{2(\gamma-1)} = 0
\]
for each $cB>0$. Consequently, the claim of Lemma \ref{lem:extFT}
cannot be extended to the case $\gamma < -1$ in a straightforward way.
It can be done under the smallness assumption on the initial datum $|m^0(x)| < m_b(x)$
for all $x\in (0,1)$, but we will skip the technical details here.
\end{remark}

%-----------------------------------------------------------
\subsection{Nonlinear stability analysis for $D=0$}
\label{sec:stability1D}
%-----------------------------------------------------------
In Section 6.1 of \cite{HMP15} we studied the nonlinear asymptotic stability
of the 1d network formation system with $D=0$ and $\gamma\geq 1$.
We now extend that analysis to values $\gamma\geq 1/2$.

Setting $D=0$ in \eqref{meq}, we obtain
\(   \label{ODE1}
    \partial_t m = \left( \frac{c^2 B(x)^2}{(1+m^2)^2} - |m|^{2(\gamma-1)} \right) m,
\)
which we interpret as a family of ODEs for $m=m(t)$ with the parameter $x$.
Assuming that $S>0$ on $(0,1)$, we have $B(x)>0$ on $(0,1)$.

Clearly, $m=0$ is a steady state for \eqref{ODE1}; with $\gamma=1/2$ we interpret $m/|m|=0$ for $m=0$.
To find nonzero steady states, we solve the algebraic equation
\[   \label{eq_m_s}
   \frac{c^2 B(x)^2}{(1+m^2)^2} - |m|^{2(\gamma-1)} = 0,
\]
in other words, we look for the roots of $h_\gamma(m) - c^2B^2(x) = 0$ with $h_\gamma$ given by \eqref{h_gamma}.
We distinguish the cases:
\begin{itemize}
 \item $\gamma>1$: %For a fixed $x\in(0,1)$ there are exactly two solutions $\pm m_s$ for some $m_s = m_s(x) >0$.
 The ODE \eqref{ODE1} has three stationary points: \emph{unstable} $m_0=0$ and \emph{stable} $\pm m_s$.
 Therefore, the asymptotic steady state for \eqref{ODE1} subject to the initial datum $m^0=m^0(x)$ on $(0,1)$
 is $m_s(x)\sign(m^0(x))$.
 
 \item $\gamma=1$:
 \begin{itemize}
 \item[*] If $c|B(x)| > 1$, then there are three stationary points, \emph{unstable} $m_0=0$ and \emph{stable} $\pm \sqrt{c|B(x)|-1}$.
 \item[*] If $c|B(x)| \leq 1$, then there is the only \emph{stable} stationary point $m=0$.
 \end{itemize}
 Thus, the solution of \eqref{ODE1} subject to the initial datum $m^0=m^0(x)$ on $(0,1)$
 converges to the asymptotic steady state $\chi_{\{c|B(x)| > 1\}}(x) \sign(m^0(x)) \sqrt{c|B(x)|-1}$.
 
 \item For $1/2 \leq \gamma < 1$ the picture depends on the size of $c|B(x)|$ relative to $Z_\gamma$
 defined in \eqref{Z_gamma}.
 \begin{itemize}
 \item[*] If $c|B(x)| > Z_\gamma$, then \eqref{ODE1} has five stationary points, \emph{stable} $m_0=0$,
    \emph{unstable} $\pm m_u$ and \emph{stable} $\pm m_s$, with $0 < m_u < m_s$.
 \item[*] If $c|B(x)| = Z_\gamma$, then zero is a \emph{stable} stationary point and
 there are two symmetric nonzero stationary points (attracting from $\pm\infty$ and repulsing towards zero).
 \item[*] If $c|B(x)| < Z_\gamma$, then there is the only \emph{stable} stationary point $m=0$.
 \end{itemize}
% % \comment{I suggest that we restrict ourselves to the cases $\gamma>1/2$}
%  For completness, we also consider the case $-1 < \gamma \leq 1/2$, but note that then $m\equiv 0$ is not
%  even a solution of \eqref{ODE1}.
%   \begin{itemize}
%  \item[*] If $c|B(x)| > Z_\gamma$, then \eqref{ODE1} has four stationary points,
%     \emph{unstable} $\pm m_u$ and \emph{stable} $\pm m_s$, with $0 < m_u < m_s$.
%     For an initial datum $|m^0|< m_u$, the solution converges to zero in finite time.
%  \item[*] If $c|B(x)| = Z_\gamma$, then there are two symmetric nonzero stationary points (attracting from $\pm\infty$ and repulsing towards zero).
%      Again, if the initial datum $|m^0|$ is small enough, the solution converges to zero in finite time.
%  \item[*] If $c|B(x)| \leq Z_\gamma$, there are no stationary points and all solutions of \eqref{ODE1} converge to zero in finite time.
%  \end{itemize}

% \item $\gamma=-1$:
% \begin{itemize}
% \item[*] If $c|B(x)| > 1$, then there are two nonzero, symmetric \emph{unstable} stationary points $\pm m_u$.
% \item[*] If $c|B(x)| \leq 1$, then there are no stationary points, and $m$ converges to zero in finite time.
% \end{itemize}

% \item $\gamma < -1$:
% Exactly two nonzero, symmetric \emph{unstable} steady states $\pm m_u$ exist.
 \end{itemize}

\begin{remark}
The above asymptotic stability result for the case $1/2 \leq \gamma < 1$
shows that, at least in the case $D=0$, the assumption $c\Norm{B}_{L^\infty(0,1)} < Z_\gamma$
of Lemma \ref{lem:extFT} is optimal.
\end{remark}

%-----------------------------------------------------------------
\section{Stationary solutions in the multidimensional setting for $D=0$}\label{S3}
In the multidimensional setting we are able to construct
\emph{pointwise} stationary solutions of \eqref{eq1}--\eqref{eq2}.
Regarding the number of possible solutions, we obtain
the same picture as in the previous Section \ref{sec:stability1D}.
However, we are not able to provide a stability analysis.

We denote $\flux:=(I+m\otimes m)\grad p$, so that \eqref{eq1} gives
\[
    - \grad\cdot \flux = S
\]
and
\(   \label{grad_p-F}
    \grad p = (I+m\otimes m)^{-1}\flux = \left(I - \frac{m\otimes m}{1+|m|^2}\right)\flux.
\)
The activation term $c^2 (m\cdot\grad p)\grad p$ in \eqref{eq2} is then expressed in terms of $\flux$ as
\[
    c^2 (m\cdot\grad p)\grad p = c^2 \frac{m\cdot \flux}{1+|m|^2} \left(I - \frac{m\otimes m}{1+|m|^2}\right)\flux.
\]
Therefore, stationary solutions of \eqref{eq1}--\eqref{eq2} with $D=0$ satisfy
\(   \label{m-F-eq}
    c^2 \frac{m\cdot \flux}{1+|m|^2} \flux = \left( c^2 \frac{(m\cdot \flux)^2}{(1+|m|^2)^2} + |m|^{2(\gamma-1)} \right) m.
\)
Clearly, $m(x)=0$ is a solution for any $\flux\in\R^d$.
On the other hand, if $m(x)\neq 0$, then there exists a nonzero scalar $\beta(x)\in\R\setminus\{0\}$
such that $m(x) = \beta(x)\flux(x)$. Denoting $z:=\beta(x)|\flux(x)|$ and inserting into \eqref{m-F-eq}
gives
\[
    \frac{c^2 |\flux|^2}{1+z^2} = \frac{c^2 |\flux|^2 z^2}{(1+z^2)^2} + |z|^{2(\gamma-1)},
\]
which further reduces to
\(   \label{F-z-eq}
    c |\flux| = |z|^{\gamma-1}(1+z^2).
\)
We now distinguish the cases:
\begin{itemize}
 \item For $\gamma> 1$ the equation \eqref{F-z-eq} has exactly one positive solution $z>0$
 for every $|\flux|>0$.
 \item For $\gamma=1$ the equation \eqref{F-z-eq} has exactly one positive solution $z>0$
 for every $|\flux|>1/c$ and no positive solutions for $|\flux|\leq 1/c$.
 \item For $1 > \gamma \geq 1/2$ (in fact for $\gamma>-1$, but we discard the values of $\gamma<1/2$),
 if $c|\flux| > Z_\gamma$ with $Z_\gamma$ given by \eqref{Z_gamma}, there exist exactly
 two positive solutions $z_1, z_2 >0$ of \eqref{F-z-eq} for every $c|\flux|>0$.
 If $c|\flux| = Z_\gamma$, there is one positive solution $z>0$, and if $c|\flux| < Z_\gamma$,
 \eqref{F-z-eq} has no solutions.
\end{itemize}

Let us recall that in \cite{HMP15} we considered stationary solutions $(m_0, p_0)$ of \eqref{eq1}--\eqref{eq2} in the case $D=0$, $\gamma>1$.
These are constructed by fixing measurable disjoint sets
$\A_+\subseteq\Omega$, $\A_-\subseteq\Omega$ and setting
\(  \label{m0}
    m_0(x) := \left(\chi_{\A_+}(x) - \chi_{\A_-}(x)\right) c^\frac1{\gamma-1}|\grad p_0(x)|^\frac{2-\gamma}{\gamma-1} \grad p_0(x),
\)
where $p_0\in H_0^1(\Omega)$ solves the nonlinear Poisson equation
\(   \label{p0}
   -\grad\cdot\left[\left(1 + c^\frac{2}{\gamma-1}|\grad p_0(x)|^\frac{2}{\gamma-1}\chi_{\A_+\cup\A_-}(x) \right)\grad p_0(x) \right] = S,
\)
subject to - say - homogeneous Dirichlet boundary condition.
The steady states $p_0\in H^1_0(\Omega) \cap W_0^{1,2\gamma/(\gamma-1)}(\A_+\cup\A_-)$
were found as the unique minimizers of the uniformly convex and coercive functional
\[   \label{calF}
   \calF_\gamma[p] := \frac12 \int_\Omega |\grad p|^2 \d x + c^\frac{2}{\gamma-1} \frac{\gamma-1}{2\gamma} \int_{\A_+\cup\A_-} |\grad p|^{\frac{2\gamma}{\gamma-1}} \d x
     - \int_\Omega p S \d x,
\]
see Theorem 6 in \cite{HMP15}.
Let us remark that the linearized stability analysis performed in Section 6.2 of \cite{HMP15}
implies that in the case $D=0$, $\gamma>1$ the \emph{linearly} stable (in the sense of G\^{a}teaux derivative) networks
fill up the whole domain due to the necessary condition $\mbox{meas}(\A_+ \cup \A_-) = \mbox{meas}(\Omega)$.
In the 1d case, the nonlinear stability analysis of Section \ref{sec:stability1D} above implies
that the same holds also for the (nonlinearly) stable stationary solution.
On the other hand, for $\gamma=1$ the stationary solution $m_0$
\emph{must} vanish on the set $\{x\in\Omega;\; |\flux(x)|\leq 1/c\}$.
We shall return to this case below.

\subsection{Stationary solutions in the multidimensional setting for $D=0$, $1/2 \leq \gamma <1$} 
Inserting the formula $m(x) = \frac{z(x)}{|\flux(x)|}\flux(x)$ into \eqref{grad_p-F} gives
\[
    \grad p(x) &=& \flux(x) \qquad\mbox{for } m(x)=0,\\
       &=& \frac{\flux(x)}{1+z(x)^2} \qquad\mbox{for } m(x)\neq 0.
\]
Consequently, we choose mutually disjoint measurable sets $\A_0$, $\A_1$, $\A_2$
such that $\A_0\cup\A_1\cup\A_2 = \Omega$, and construct the stationary pressure gradient as
\[
    \grad p(x) = a(x,|\flux(x)|^2) \flux(x)
\]
with
\(   \label{a}
    a(x,r) &=& \chi_{\{r<\bZ\}}
       + \chi_{\{r\geq \bZ\}} \left(
          \chi_{\A_0}(x) + \frac{\chi_{\A_1}(x)}{1+z_1(r)^2}
          + \frac{\chi_{\A_2}(x)}{1+z_2(r)^2} \right),
\)
where we denoted $\bZ:=Z_\gamma^2/c^2$,
and $z_1(r)$, $z_2(r)$ are the two positive solutions of \eqref{F-z-eq} with $r=|\flux|^2$,
i.e.,
\(   \label{zr-eq}
    c^2 r = |z(r)|^{2(\gamma-1)}(1 + z(r)^2)^2.
\)
We denote by $z_1(r)$ the branch of solutions of \eqref{zr-eq} that is decreasing in $r$,
while $z_2(r)$ is increasing. Consequently,
\[
    \mbox{range}(z_1) = (0,z(\bZ)],\qquad \mbox{range}(z_2) = [z(\bZ),\infty),
\]
where we denoted $z(\bZ):=z_1(\bZ) = z_2(\bZ)$.

We assume $\int_\Omega S(x)\d x = 0$ and prescribe the homogeneous Neumann boundary condition
for $p$,
\[
    \grad p\cdot\nu = 0 \qquad\mbox{on } \partial\Omega, 
\]
where $\nu$ is the outer normal vector to $\partial\Omega$.
We perform the Helmholtz decomposition of $\flux$ as
\[
    \flux = \grad\phi + \curl\;U,
\]
where $\phi$ solves
\[
    -\laplace\phi &=& S\qquad\mbox{in }\Omega,\\
    \grad\phi\cdot\nu &=& 0\qquad\mbox{on }\partial\Omega.
\]
The identity $\curl\;\grad p = \curl(a(x,|\flux|)\flux) = 0$ gives the equation
\(  \label{curl-eq}
    \curl\left[ a(x,|\grad\phi + \curl\;U|^2)(\grad\phi + \curl\;U) \right] = 0,
\)
subject to the boundary condition $\curl\;U\cdot\nu = 0$ on $\partial\Omega$.

We define $A(x,r) := \int_0^r a(x,s) \d s \geq 0$ and for given $\phi=\phi(x)$ the functional
\(   \label{var_form}
   \mathcal{I}(U) := \frac12 \int_\Omega A(x,|\grad\phi + \curl\;U|^2) \d x.
\)
It is easily checked that \eqref{curl-eq} is the Euler-Lagrange equation
corresponding to critical points of $\mathcal{I}$.

We will now check whether $\mathcal{I}$ is convex.
For any fixed vector $\xi\in\R^d$ have
\[
   \frac12 \xi\cdot D^2_{ww} A(x, |\grad\phi+w|^2) \xi &=&
   \frac12 \sum_{i=1}^d \sum_{j=1}^d \partial^2_{w_i w_j} A(x, |\grad\phi+w|^2) \xi_i\xi_j \\
   &=& 2 \frac{\partial^2 A}{\partial r^2}(x, |\grad\phi+w|^2) (\xi\cdot(\grad\phi + w))^2
      + \frac{\partial A}{\partial r}(x, |\grad\phi+w|^2) |\xi|^2,
\]
where $\frac{\partial A}{\partial r} = \frac{\partial A}{\partial r}(x,r)$ is the derivative of $A$
with respect to the second variable.
Therefore, convexity of $\mathcal{I}(U)$ is equivalent to the condition
\(   \label{convexityA}
    2 \frac{\partial^2 A}{\partial r^2}(x, |v|^2) (\xi\cdot v)^2
    + \frac{\partial A}{\partial r}(x, |v|^2) |\xi|^2 \geq 0
    \qquad\mbox{for all } v,\,\xi\in\R^d.
\)
Using the decomposition $\xi = \lambda v + v^\perp$ with $\lambda = \frac{\xi\cdot v}{|v|^2}$ and $v^\perp\cdot v=0$
gives
\[
   \lambda^2 |v|^2 \left( 2 \frac{\partial^2 A}{\partial r^2}(x, |v|^2) |v|^2 + \frac{\partial A}{\partial r}(x, |v|^2) \right)
     + \frac{\partial A}{\partial r}(x, |v|^2) |v^\perp|^2 \geq 0.
\]
Consequently, \eqref{convexityA} is equivalent to the conditions
\[
    \frac{\partial A}{\partial r}(x, r) \geq 0,\qquad
    2 \frac{\partial^2 A}{\partial r^2}(x, r) r + \frac{\partial A}{\partial r}(x, r) \geq 0
\]
for all $x\in\Omega$, $r>0$.
Note that $\frac{\partial A}{\partial r}(x, r) = a(x,r) \geq 0$ due to \eqref{a},
so that we only need to verify the second condition.
We choose a compactly supported nonnegative test function $\psi\in C^\infty_0(\Omega,[0,\infty))$
and integrate by parts to obtain
\[
    \int_0^\infty \int_\Omega \left( 2 \frac{\partial^2 A}{\partial r^2}(x, r) r + \frac{\partial A}{\partial r}(x, r) \right) \psi(x,r)\d x\d r
     = - \int_0^\infty \int_\Omega a(x,r) \left( 2r\frac{\partial\psi}{\partial r} + \psi\right) \d x\d r.
\]
% Now observe that if $c|\grad\phi+\curl\;U| \leq Z_\gamma$ almost everywhere on $\Omega$, then $a(x,r)\equiv 1$
% and another integration by parts gives
% \[
%    - \int_0^\infty \int_\Omega a(x,r) \left( 2r\frac{\partial\psi}{\partial r} + \psi\right) \d x\d r 
%       = \int_0^\infty \int_\Omega \psi(x,r) \d x\d r \geq 0.
% \]
% Consequently, the functional $\mathcal{I}(U)$ is convex on the set $\{U; c|\grad\phi+\curl\;U| \leq Z_\gamma\}$,
% but an eventual critical point there would only give the trivial stationary solution $m\equiv 0$.
Inserting for $a=a(x,r)$ the expression \eqref{a}, we calculate
\[
    - \int_0^\infty \int_\Omega a(x,r) \left( 2r\frac{\partial\psi}{\partial r} + \psi\right) \d x\d r &=&
    \int_0^\bZ \int_\Omega \psi(x,r) \d x\d r + \int_\bZ^\infty \int_{\A_0} \psi(x,r)\d x\d r
      - 2\bZ \int_{\A_1\cup\A_2} \psi(x,\bZ)\d x \\
      &+& \int_\bZ^\infty \int_{\A_1} \frac{\psi(x,r)}{1+z_1(r)^2} \d x\d r 
      + \int_\bZ^\infty \int_{\A_2} \frac{\psi(x,r)}{1+z_2(r)^2} \d x\d r \\
    &+& \frac{2\bZ}{1+z_1(\bZ)^2} \int_{\A_1} \psi(x,\bZ) \d x 
     + \frac{2\bZ}{1+z_2(\bZ)^2} \int_{\A_2} \psi(x,\bZ) \d x \\
     &+& \frac{4}{c^2} \int_0^{z_1(\bZ)} \int_{\A_1} s^{2\gamma-1} \psi(x,z_1^{-1}(s))\d s
     - \frac{4}{c^2} \int_{z_2(\bZ)}^\infty \int_{\A_2} s^{2\gamma-1} \psi(x,z_2^{-1}(s))\d s.
\]
Note that even if we set $\A_2:=\emptyset$, the third term of the right-hand side
cannot be, in general, balanced by the other ones.
Consequently, we do not have convexity of $\mathcal{I}(U)$.

To check coercivity, we take $r > \bZ$, then
\[
   A(x,r) = \bZ + (r-\bZ)\chi_{\A_0}(x) + \chi_{\A_1}(x) \int_\bZ^r \frac{\d s}{1+z_1(s)^2} + \chi_{\A_2}(x) \int_\bZ^r \frac{\d s}{1+z_2(s)^2}.
\]
With \eqref{zr-eq} we have for both branches $z_1$, $z_2$,
\[
   c^2 r = |z|^{1(\gamma+1)}\left( 1 + 2z^{-2} + z^{-4} \right).
\]
Consequently, the increasing branch $z_2(r)\to\infty$ when $r\to\infty$
and $c^2 r \sim z_2(r)^{2(1+\gamma)}$, so that
\[
    \frac{1}{1+z_2(r)^2} \sim c^{-\frac{2}{1+\gamma}} r^{-\frac{1}{1+\gamma}}
\]
and
\[
   \int^r \frac{\d s}{1+z_2(s)^2} \sim r^\frac{\gamma}{1+\gamma}.
\]
Noting that $r=|\flux|^2=|\curl\; U + \grad\phi|^2$, the energy estimate
gives (at least) control of $|\curl\; U|^\frac{2\gamma}{1+\gamma}$.
For $1/2\leq\gamma<1$ this gives the range $2/3\leq\frac{2\gamma}{1+\gamma} < 1$
which is not enough to obtain usable coercivity estimates.
Thus, the existence of stationary points of the functional $\mathcal{I}$
remains open for $1/2\leq\gamma<1$,
however, the corresponding variational formulation \eqref{var_form} can be used
as an alternative method for numerical simulations.

%-----------------------------------------------------------------
\subsection{Stationary solutions in the multidimensional setting for $D=0$, $\gamma=1$} \label{sec:stable_stationary}
In the case $\gamma=1$, the stationary version of \eqref{eq2} with $D=0$ reads
\[   \label{stat_gamma=1}
   c^2(\grad p_0\otimes\grad p_0) m_0  = m_0,
\]
i.e., $m_0$ is either the zero vector or an eigenvector of the matrix $c^2(\grad p_0\otimes\grad p_0)$
with eigenvalue $1$.
The spectrum of $c^2(\grad p_0\otimes\grad p_0)$ consists of zero and $c^2|\grad p_0|^2$,
so that $m_0\neq 0$ is only possible if $c^2|\grad p_0|^2 = 1$.
Therefore, for every stationary solution there exists a measurable function $\lambda=\lambda(x)$
such that
\[
   m_0(x) = \lambda(x)\chi_{\{c^2|\grad p_0|^2=1\}}(x) \grad p_0(x)
\]
and $p_0$ solves the highly nonlinear Poisson equation
\[  \label{nonlinPoisson}
   -\grad\cdot\left[ \left(1+\frac{\lambda(x)^2}{c^2}\chi_{\{c^2|\grad p_0|^2=1\}}(x) \right) \grad p_0 \right] = S
\]
subject to the homogeneous Dirichlet boundary condition $p_0=0$ on $\partial\Omega$.

A simple consideration suggests that \emph{stable} stationary solutions of \eqref{eq1}--\eqref{eq2} with $D=0$ should be constructed as
\(
   -\grad\cdot\left[(1+a(x)^2)\grad p_0\right] &=& S,\qquad p_0 \in H^1_0(\Omega), \label{NLP1} \\
   c^2 |\grad p_0(x)|^2 &\leq& 1,\qquad \mbox{a.e. on }\Omega, \label{NLP2} \\
   a(x)^2 \left[c^2|\grad p_0(x)|^2-1 \right] &=& 0,\qquad \mbox{a.e. on }\Omega, \label{NLP3}
\)
for some measurable function $a^2=a(x)^2$ on $\Omega$ which is the Lagrange multiplier
for the condition \eqref{NLP2}.
This condition follows from the nonpositivity of the eigenvalues of the matrix $c^2(\grad p_0\otimes\grad p_0) - I$,
which is heuristically a necessary condition for linearized stability of the stationary solution
of \eqref{eq1}--\eqref{eq2} with $D=0$.
The function $\lambda=\lambda(x)$ can be chosen as $\lambda(x):=ca(x)$.

\subsubsection{Variational formulation}
We claim that solutions of \eqref{NLP1}--\eqref{NLP3} are minimizers of the energy functional
\(   \label{functional_g1}
    \mathcal{J}[p]:=\int_\Omega \left( \frac{|\grad p|^2}{2} - Sp  \right) \d x
\)
on the set $\mathcal{M}:=\{p\in H_0^1(\Omega), c^2|\grad p|^2\leq 1 \mbox{ a.e. on } \Omega\}$.

\begin{lemma}\label{lem:NLP}
Let $S\in L^2(\Omega)$. There exists a unique minimizer of the functional \eqref{functional_g1}
on the set $\mathcal{M}$.
It is the unique weak solution of the problem \eqref{NLP1}--\eqref{NLP3}
with homogeneous Dirichlet boundary conditions on $\Omega$
and with $a\in L^2(\Omega)$.
\end{lemma}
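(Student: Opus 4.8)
The plan is to exploit the fact that $\mathcal{M}$ is a closed convex subset of $H^1_0(\Omega)$ and that $\mathcal{J}$ is strictly convex and coercive, so that the direct method yields a unique minimizer; the content is then to show this minimizer is precisely the weak solution of the obstacle-type problem \eqref{NLP1}--\eqref{NLP3}. First I would check that $\mathcal{M}$ is nonempty (it contains $p\equiv 0$), convex (the constraint $c^2|\grad p|^2 \leq 1$ is convex in $\grad p$, hence the set of admissible $p$ is convex), and closed in $H^1_0(\Omega)$: if $p_n \to p$ strongly in $H^1_0$ then $\grad p_n \to \grad p$ in $L^2(\Omega)^d$, so along a subsequence $\grad p_n \to \grad p$ a.e., and the pointwise constraint passes to the limit. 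Coercivity of $\mathcal{J}$ on $H^1_0(\Omega)$ follows from the Poincar\'e inequality and Young's inequality, $\mathcal{J}[p] \geq \tfrac14 \Norm{\grad p}_{L^2}^2 - C\Norm{S}_{L^2}^2$, and strict convexity is immediate since $p\mapsto \Norm{\grad p}_{L^2}^2$ is strictly convex on $H^1_0(\Omega)$ (the linear term $-\int Sp$ does not affect strict convexity). Hence a unique minimizer $p_0 \in \mathcal{M}$ exists.

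Next I would derive the variational inequality. For any $q \in \mathcal{M}$ and $\theta \in [0,1]$, convexity gives $p_0 + \theta(q - p_0) \in \mathcal{M}$, so $\tot{}{\theta}\mathcal{J}[p_0 + \theta(q-p_0)]\big|_{\theta=0^+} \geq 0$, which yields the obstacle variational inequality
\(   \label{VI-g1}
    \int_\Omega \grad p_0 \cdot \grad(q - p_0) \d x \geq \int_\Omega S (q - p_0) \d x \qquad \text{for all } q \in \mathcal{M}.
\)
From \eqref{VI-g1} the equation \eqref{NLP1} together with \eqref{NLP2}, \eqref{NLP3} is extracted by a standard duality/Lagrange-multiplier argument for gradient-constrained problems: the constraint set is $\mathcal{M} = \{c^2|\grad p|^2 \leq 1\}$, and by the Kuhn--Tucker theory for convex minimization under the convex pointwise constraint $g(\grad p) := c^2|\grad p|^2 - 1 \leq 0$ there exists a nonnegative multiplier $\mu \in (\text{dual of } L^\infty)$ — here realizable as a nonnegative $L^1_{\mathrm{loc}}$, in fact $L^1(\Omega)$, density by the structure of the problem — such that the Euler--Lagrange equation reads $-\grad\cdot\big[(1 + 2\mu c^2)\grad p_0\big] = S$ weakly, together with complementary slackness $\mu\,(c^2|\grad p_0|^2 - 1) = 0$ a.e. Setting $a(x)^2 := 2\mu(x) c^2$ recovers \eqref{NLP1}--\eqref{NLP3} exactly, and \eqref{NLP3} is precisely complementary slackness; \eqref{NLP2} is membership in $\mathcal{M}$.

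The main obstacle is the regularity of the multiplier: one must argue that $a^2 \in L^2(\Omega)$ (equivalently $\mu \in L^2$), rather than merely a measure. The clean way is to obtain it a posteriori from \eqref{NLP1}: once one knows $p_0 \in \mathcal{M}$ solves the variational inequality, one tests \eqref{NLP1} (written as $-\grad\cdot[(1+a^2)\grad p_0] = S$) and uses that $S \in L^2(\Omega)$ and $|\grad p_0| \leq 1/c$ is bounded; by an approximation/penalization scheme (a penalized problem $-\grad\cdot[(1 + \tfrac1\eps(c^2|\grad p_\eps|^2 - 1)_+)\grad p_\eps] = S$, whose solutions $p_\eps$ converge to $p_0$) one shows the penalty terms $a_\eps^2 := \tfrac1\eps(c^2|\grad p_\eps|^2 - 1)_+$ are bounded in $L^2(\Omega)$ uniformly in $\eps$: indeed $\Norm{a_\eps^2 \grad p_\eps}_{L^2} = \Norm{S + \laplace p_\eps}_{L^2}$ and an $H^2$-type estimate on $p_\eps$ (available since the penalized operator is uniformly elliptic and $S \in L^2$, on a smooth domain) controls $\Norm{\laplace p_\eps}_{L^2}$; since $|\grad p_\eps|$ stays essentially bounded this transfers to a uniform $L^2$ bound on $a_\eps^2$. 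Passing to the limit $\eps \to 0$ then gives $a^2 \in L^2(\Omega)$ and the equations \eqref{NLP1}--\eqref{NLP3}, with uniqueness of $p_0$ inherited from the strict convexity established above. (That the penalized solution $p_\eps$ solves a standard semilinear elliptic equation with a monotone nonlinearity, hence exists and is unique, and that $p_\eps \to p_0$ in $H^1_0(\Omega)$, is routine and I would only sketch it.)
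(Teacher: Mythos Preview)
Your existence-and-uniqueness-of-minimizer argument and the derivation of the variational inequality match the paper's. The divergence is in how you obtain the multiplier regularity $a\in L^2(\Omega)$ and in the uniqueness of weak solutions.

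\medskip

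\textbf{On $a\in L^2$.} The paper does \emph{not} use penalization here. Once \eqref{NLP1}--\eqref{NLP3} are written down, it simply tests \eqref{NLP1} with $p_0$ and uses Poincar\'e/Young to get
\[
   \int_\Omega a^2 |\grad p_0|^2 \d x \leq C \Norm{S}_{L^2(\Omega)}^2,
\]
and then invokes the complementarity condition \eqref{NLP3}, which forces $a^2|\grad p_0|^2 = a^2/c^2$ a.e., so that $\Norm{a}_{L^2(\Omega)}^2 \leq c^2 C\Norm{S}_{L^2(\Omega)}^2$. This is a two-line argument needing only $S\in L^2(\Omega)$.

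Your penalization route is essentially what the paper develops \emph{afterwards} (Section~4.2.2) as an alternative construction, but there it requires $S\in H^1(\Omega)$ (the $H^2$ bound on $p_\eps$ comes from a Bernstein estimate that differentiates the equation). Under the lemma's hypothesis $S\in L^2(\Omega)$ only, your sketch does not close. Moreover, the identity you write, $\Norm{a_\eps^2\grad p_\eps}_{L^2} = \Norm{S+\laplace p_\eps}_{L^2}$, is not correct: the equation controls $\div(a_\eps^2\grad p_\eps)$, not $a_\eps^2\grad p_\eps$ itself, so transferring to an $L^2$ bound on $a_\eps^2$ needs more work (the paper does it via a separate estimate on $\grad a_\eps\cdot\grad p_\eps$ and a maximum principle for $p_\eps$).

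\medskip

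\textbf{On uniqueness of weak solutions.} The lemma asserts that the minimizer is the \emph{unique weak solution} of \eqref{NLP1}--\eqref{NLP3}. Strict convexity gives uniqueness of the \emph{minimizer}; it does not by itself rule out weak solutions of the system that fail to minimize. The paper closes this gap in two ways: it shows that any weak solution $(p_0,a)$ with $a^2\in L^1$ is a minimizer (test \eqref{NLP1} with $p_0-q$ for $q\in\mathcal{M}$, use Cauchy--Schwarz and \eqref{NLP3}), and it also gives a direct energy argument for $p_1=p_2$ by testing the difference of the equations with $p_1-p_2$ and exploiting \eqref{NLP2}--\eqref{NLP3}. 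Your proposal omits this step.
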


\begin{proof}
The functional $\mathcal{J}$ is convex and, due to the Poincar\'e inequality, coercive on $H_0^1(\Omega)$.
Therefore, a unique minimizer $p_0\in H_0^1(\Omega)$ exists
on the closed, convex set $\mathcal{M}$.
Clearly, \eqref{NLP1}--\eqref{NLP3} is the Euler-Lagrange system corresponding to this
constrained minimization problem, so that $p_0$ is its weak solution.
Moreover, using $p_0$ as a test function and an application of the Poincar\'e
inequality yields
\[
   \int_\Omega (1+a^2)|\grad p_0|^2 \d x &=& \int_\Omega Sp_0 \d x \\
     &\leq& \int_\Omega |\grad p_0|^2 \d x + C\int_\Omega S^2 \d x.
\]
With \eqref{NLP3} we have then
\[
   \int_\Omega a^2 \d x = c^2 \int_\Omega a^2 \grad |p_0|^2 \d x \leq c^2 C \int_\Omega S^2 \d x,
\]
so that $a\in L^2(\Omega)$.

Next, we prove that any weak solution $p_0\in H_0^1(\Omega)$, $a^2\in L^1(\Omega)$
of \eqref{NLP1}--\eqref{NLP3} is a minimizer of \eqref{functional_g1}
on the set $\mathcal{M}$.
Indeed, we consider any $q\in\mathcal{M}$ and use $(p_0-q)$ as a test function for \eqref{NLP1},
\[
    \int_\Omega (1+a^2)\grad p_0\cdot\grad(p_0-q) \d x = \int_\Omega S(p_0-q) \d x.
\]
The Cauchy-Schwarz inequality for the term $\grad p_0\cdot\grad q$ gives
\[
    \frac12 \int_\Omega (1+a^2)|\grad p_0|^2 \d x \leq \frac12 \int_\Omega (1+a^2) |\grad q|^2 \d x + \int_\Omega (p_0-q)S \d x.
\]
Moreover, \eqref{NLP3} gives $a^2 |\grad p_0|^2 = a^2/c^2$, and with $|\grad q|\leq 1/c^2$ we have
\[
    \int_\Omega \left( \frac{|\grad p_0|^2}{2} - p_0 S \right) \d x + \frac{1}{2c^2} \int a^2 \d x \leq
       \int_\Omega \frac{|\grad q|^2}{2} - qS \d x + \frac{1}{2c^2} \int a^2 \d x,
\]
so that $\mathcal{J}[p_0] \leq \mathcal{J}[q]$.

Finally, let $p_i\in H_0^1(\Omega)$, $a_i\in L^2(\Omega)$, $i=1,2$, be two weak solutions of \eqref{NLP1}--\eqref{NLP3}.
We take the difference of \eqref{NLP1} for $p_1$ and $p_2$
and test by $(p_1-p_2)$:
\[
    \int_\Omega \left[ (1+a_1^2)\grad p_1 - (1+a_2^2)\grad p_2 \right]\cdot(\grad p_1-\grad p_2) \d x = 0.
\]
We use the Cauchy-Schwarz inequality for
\[
    \int_\Omega (a_1^2 + a_2^2) (\grad p_1\cdot\grad p_2) \d x \leq
      \frac12 \int_\Omega (a_1^2 + a_2^2) |\grad p_1|^2 \d x + \frac12 \int_\Omega (a_1^2 + a_2^2) |\grad p_2|^2 \d x \leq
      \frac{1}{c^2}\int_\Omega (a_1^2 + a_2^2),
\]
where the second inequality comes from \eqref{NLP2}. %, $|\grad p|^2 \leq \frac{1}{c^2}$.
Consequently, we have
\[
    \int_\Omega |\grad p_1-\grad p_2|^2 \d x + \int_\Omega a_1^2 |\grad p_1|^2 + a_2^2 |\grad p_2|^2 \leq
      \frac{1}{c^2}\int_\Omega (a_1^2 + a_2^2). 
\]
Finally, using \eqref{NLP3} we obtain
\[
    \int_\Omega |\grad p_1-\grad p_2|^2 \d x \leq 0
\]
and conclude that $p_1= p_2$ a.e. on $\Omega$.
\end{proof}

\begin{remark}
The gradient constrained variational problem \eqref{functional_g1}
was studied in \cite{Caffarelli} as a model for twisting
of an elastic-plastic cylindrical bar.
There it was shown that the unique solution
has $C^{1,1}$-regularity in $\Omega$;
see also \cite{Safdari1, Safdari2}.
\end{remark}

\subsubsection{A penalty method for $D=0$, $\gamma=1$}
Solutions of \eqref{NLP1}--\eqref{NLP3} can also be constructed via a penalty approximation.
Although the variational formulation used in the previous section is a short, effective and elegant
way to prove existence of solutions, we provide the alternative penalty method here,
since it provides approximations of the solution and 
since we find the related analytical techniques interesting on their own.
For the following we assume $\Omega$ to be the unit cube $(0,1)^d$ and
prescribe periodic boundary conditions on $\partial\Omega$
to discard of cumbersome boundary terms.

We consider the penalized problem
\(   \label{penalizedPoisson}
   -\grad\cdot \left[\left( 1+ \frac{(|\grad p_\eps|^2-1/c^2)_+}{\eps}\right) \grad p_\eps \right]= S, \qquad p_\eps \in \bar H^1(\Omega),
\)
where $A_+ := \max(A,0)$ denotes the positive part of $A$ and
$\bar H^1(\Omega) = \left\{u\in H^1_\mathrm{per}(\Omega),\; \int_\Omega u\d x = 0\right\}$.
Here $H^1_\mathrm{per}(\Omega)$ denotes the space of $H^1_\mathrm{loc}(\R^d)$-functions
with $(0,1)^d$-periodicity.

\begin{theorem} \label{thm:penalizedPoisson}
For any $S\in L^2(\Omega)$ with $\int_\Omega S(x)\d x =0$ there exists a unique weak solution
$p\in \bar H^1(\Omega)$ of \eqref{penalizedPoisson}.
\end{theorem}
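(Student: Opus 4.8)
The plan is to recognise \eqref{penalizedPoisson} as the Euler--Lagrange equation of a strictly convex, coercive functional and to apply the direct method of the calculus of variations. Introducing
\[
   \Psi(s) := s + \frac{1}{2\eps}\bigl((s-1/c^2)_+\bigr)^2, \qquad s\geq 0,
\]
one has $\Psi\in C^{1,1}([0,\infty))$ with $\Psi'(s) = 1 + (s-1/c^2)_+/\eps \geq 1$, and I would work with the functional
\[
   \mathcal{F}_\eps[p] := \frac12\int_\Omega \Psi(|\grad p|^2)\d x - \int_\Omega S p\d x
   = \frac12\int_\Omega |\grad p|^2\d x + \frac{1}{4\eps}\int_\Omega \bigl((|\grad p|^2 - 1/c^2)_+\bigr)^2 \d x - \int_\Omega S p\d x,
\]
defined on $\bar H^1(\Omega)$ with values in $(-\infty,+\infty]$. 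Its formal first variation in a direction $\varphi$ is $\int_\Omega \bigl(1 + (|\grad p|^2-1/c^2)_+/\eps\bigr)\grad p\cdot\grad\varphi\d x - \int_\Omega S\varphi\d x$, which is precisely the weak form of \eqref{penalizedPoisson}.

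Next I would establish the three structural properties that make the direct method work. For convexity: $\Psi$ is convex and nondecreasing and $\xi\mapsto|\xi|^2$ is convex, so $\xi\mapsto\Psi(|\xi|^2)$ is convex; since in addition $\Psi(s)-s$ is convex and nondecreasing, $\xi\mapsto\Psi(|\xi|^2) = \bigl(\Psi(|\xi|^2)-|\xi|^2\bigr) + |\xi|^2$ is \emph{strictly} convex, hence $\mathcal{F}_\eps$ is strictly convex. For coercivity: discarding the nonnegative penalty term and using the Poincar\'e--Wirtinger inequality on $\bar H^1(\Omega)$ gives $\mathcal{F}_\eps[p]\geq \frac14\Norm{\grad p}_{L^2(\Omega)}^2 - C\Norm{S}_{L^2(\Omega)}^2$, which is coercive for the $H^1$-norm on $\bar H^1(\Omega)$. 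For weak lower semicontinuity: $p\mapsto\frac12\int_\Omega\Psi(|\grad p|^2)\d x$ is strongly lower semicontinuous on $H^1(\Omega)$ by Fatou's lemma (pass to an a.e.-convergent subsequence of the gradients), hence, being convex, weakly lower semicontinuous, while $p\mapsto\int_\Omega Sp\d x$ is weakly continuous. A minimising sequence is therefore bounded in $H^1(\Omega)$; a weakly convergent subsequence has its limit $p_\eps$ in the weakly closed space $\bar H^1(\Omega)$ and attains the infimum, so a minimiser exists. Strict convexity gives uniqueness: two distinct minimisers would have distinct gradients on a set of positive measure, and their midpoint would strictly decrease $\mathcal{F}_\eps$, a contradiction.

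It then remains to pass between ``minimiser of $\mathcal{F}_\eps$'' and ``weak solution of \eqref{penalizedPoisson}''. Since $\mathcal{F}_\eps[p_\eps]<\infty$, we have $(|\grad p_\eps|^2-1/c^2)_+\in L^2(\Omega)$, whence $|\grad p_\eps|^2\in L^2(\Omega)$, i.e. $\grad p_\eps\in L^4(\Omega)$, and by H\"older the flux $\bigl(1+(|\grad p_\eps|^2-1/c^2)_+/\eps\bigr)\grad p_\eps$ belongs to $L^{4/3}(\Omega)$; the weak formulation is thus meaningful against test functions $\varphi\in W^{1,4}_\mathrm{per}(\Omega)$ (and, by density, against smooth periodic test functions). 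Because $0\leq\Psi'(s)\leq 1+s/\eps$, for such $\varphi$ and $|t|\leq1$ the map $t\mapsto\tfrac12\Psi(|\grad p_\eps+t\grad\varphi|^2)$ is differentiable with a fixed $L^1(\Omega)$-majorant of the derivative built from $|\grad p_\eps|^3$, $|\grad\varphi|^3$ and $|\grad\varphi|$; dominated convergence makes $t\mapsto\mathcal{F}_\eps[p_\eps+t\varphi]$ differentiable, and its derivative vanishing at $t=0$ yields the weak form of \eqref{penalizedPoisson} (the zero-mean constraint on $\varphi$ plays no role since $\int_\Omega S\d x = 0$). Conversely, any weak solution $p\in\bar H^1(\Omega)$ satisfies, by convexity and the gradient inequality, $\mathcal{F}_\eps[q]\geq\mathcal{F}_\eps[p]$ for every $q\in\bar H^1(\Omega)$ (trivially if $\mathcal{F}_\eps[q]=+\infty$, otherwise by testing with $\varphi=q-p\in W^{1,4}_\mathrm{per}(\Omega)$), hence is a minimiser and therefore equals $p_\eps$.

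I expect the only real obstacle to be the bookkeeping forced by the quartic growth of the penalty term: the functional is genuinely only $H^1$-coercive, so it must be handled as an extended-real-valued functional on $\bar H^1(\Omega)$, after which one observes that any finite-energy function --- in particular the minimiser --- automatically has the $W^{1,4}$-regularity needed both to derive and to interpret the Euler--Lagrange equation. A minor point is that $a=\Psi'$ is only Lipschitz rather than $C^1$, so convexity is best read off from the composition structure above instead of from a classical second-derivative computation; this causes no difficulty.
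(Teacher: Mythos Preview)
Your proof is correct and follows essentially the same route as the paper: both recognise \eqref{penalizedPoisson} as the Euler--Lagrange equation of the functional $\mathcal{F}_\eps$, establish convexity and coercivity on $\bar H^1(\Omega)$, and invoke the direct method. The paper's argument is terse (it simply asserts uniform convexity and coercivity and cites classical theory), while you are more careful about the quartic growth of the penalty term and the ensuing $W^{1,4}$ regularity needed to make sense of the Euler--Lagrange equation; the paper obtains uniqueness from monotonicity of $\xi\mapsto(|\xi|^2-1/c^2)_+\xi$ rather than from strict convexity of $\mathcal{F}_\eps$, but these are equivalent observations.
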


\begproof
The functional $\calF_\eps: \bar H^1(\Omega) \to \R \cup \{\infty\}$,
\[
   \calF_\eps[p] := \frac12 \int_\Omega |\grad p|^2 \d x + \frac{1}{4\eps} \int_\Omega (|\grad p|^2-1/c^2)_+^2 \d x
     - \int_\Omega p S \d x,
\]
%and $\calF[p] := \infty$ if $\grad p\notin L^{\frac{2\gamma}{\gamma-1}}(\Omega)$.
is uniformly convex and coercive on $\bar H^1(\Omega)$.
The classical theory (see, e.g., \cite{Evans}) provides the existence of a unique minimizer $p_\eps\in \bar H^1(\Omega)$ of $\calF_\eps$,
which is a weak solution of the corresponding Euler-Lagrange equation \eqref{penalizedPoisson}.
The uniqueness of solutions follows from the monotonicity of the function $F:\R^d\to\R^d$, $F(x)=(|x|^2-1/c^2)_+ x$.
\endproof

We will prove convergence of a subsequence of $\{p_\eps\}_{\eps>0}$, solutions of \eqref{penalizedPoisson},
towards a solution of \eqref{NLP1}--\eqref{NLP3} as $\eps\to 0$.
We introduce the notation
\(  \label{a_eps}
   a_\eps:=\frac{(|\grad p_\eps|^2-1/c^2)_+}{\eps}.
\)

\begin{theorem}\label{thm:convergenceNLP}
Let $S\in H^1(\Omega)$ with $\int_\Omega S(x)\d x = 0$, $d\leq 3$ and
$(p_\eps)_{\eps>0}\subset \bar H^1(\Omega)$ be a family of solutions of \eqref{penalizedPoisson}
constructed in Theorem \ref{thm:penalizedPoisson}, and $(a_\eps)_{\eps>0}\subset L^2(\Omega)$ given by \eqref{a_eps}.
Then there exist a subsequence of $(p_\eps,a_\eps)$ and $p\in \bar H^1(\Omega)$, $a^2\in L^2(\Omega)$ such that,
as $\eps\to 0$,
\begin{itemize}
 \item $\grad p_\eps \rightarrow \grad p$ strongly in $L^2(\Omega)$ and strongly in $L^4(\Omega)$.
 \item $(1+a_\eps)\grad p_\eps \rightharpoonup (1+a^2)\grad p$ weakly in $L^1(\Omega)$, so that \eqref{NLP1} is satisfied in the weak sense.
 \item $(|\grad p_\eps|^2-1/c^2)_+ = \eps a_\eps \to 0$ strongly in $L^2(\Omega)$,
 so that $(|\grad p|^2-1/c^2)_+ = 0$ and \eqref{NLP2} is satisfied a.e.
 \item $a_\eps \left[|\grad p_\eps|^2-1/c^2 \right] = \eps a_\eps^2 \rightarrow 0$ strongly in $L^1(\Omega)$,
 and $a_\eps \left[|\grad p_\eps|^2-1/c^2 \right] \rightharpoonup a \left[|\grad p|^2-1/c^2 \right]$ weakly in $L^1(\Omega)$ if $d\leq 3$,
 so that \eqref{NLP3} is satisfied a.e.
\end{itemize}
\end{theorem}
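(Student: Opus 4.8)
The plan is to obtain a set of uniform-in-$\eps$ a priori estimates on $(p_\eps,a_\eps)$, extract a convergent subsequence, and then identify the limit with a weak solution of \eqref{NLP1}--\eqref{NLP3}. First I would test \eqref{penalizedPoisson} with $p_\eps$ itself, which after integration by parts and the use of the Poincar\'e inequality on $\bar H^1(\Omega)$ gives
\[
  \int_\Omega |\grad p_\eps|^2\d x + \eps\int_\Omega a_\eps^2\d x = \int_\Omega S\,p_\eps\d x \leq C\Norm{S}_{L^2(\Omega)}^2 + \tfrac12\int_\Omega|\grad p_\eps|^2\d x,
\]
so that $\grad p_\eps$ is bounded in $L^2(\Omega)$ and $\sqrt\eps\,a_\eps$ is bounded in $L^2(\Omega)$, hence $\eps a_\eps^2\to 0$ in $L^1(\Omega)$. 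To get the stronger $L^4$ control I would exploit the structure $F(x)=(|x|^2-1/c^2)_+x$: since $\eps a_\eps |\grad p_\eps|^2 = (|\grad p_\eps|^2-1/c^2)_+|\grad p_\eps|^2 \geq (|\grad p_\eps|^2-1/c^2)_+^2$, the energy bound already forces $(|\grad p_\eps|^2-1/c^2)_+ = \eps a_\eps$ to be bounded in $L^2(\Omega)$, and since $|\grad p_\eps|^4 \leq 2(|\grad p_\eps|^2-1/c^2)_+^2 + 2/c^4$ pointwise, $\grad p_\eps$ is bounded in $L^4(\Omega)$. This gives $\grad p_\eps\rightharpoonup\grad p$ weakly in $L^4$, and $\eps a_\eps\to 0$ strongly in $L^2$ along a subsequence, which yields the third bullet once we upgrade to strong convergence of $\grad p_\eps$.

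The strong convergence of $\grad p_\eps$ is the heart of the argument and I would get it by a monotonicity/Minty-type computation. Taking the difference of \eqref{penalizedPoisson} for $\eps$ and $\eps'$ and testing with $p_\eps - p_{\eps'}$ does not immediately work because the coefficients differ; instead I would test the single equation for $p_\eps$ with $p_\eps - p$, where $p$ is a weak limit, and use the monotonicity of $F$ together with the already-established fact that the right-hand side $\int_\Omega S(p_\eps-p)\to 0$. More precisely, writing the nonlinear flux as $\grad p_\eps + \eps^{-1}F(\grad p_\eps)$ with $F$ monotone, one shows
\[
  \int_\Omega |\grad p_\eps - \grad p|^2\d x \leq \int_\Omega\bigl(\grad p_\eps + \tfrac1\eps F(\grad p_\eps)\bigr)\cdot(\grad p_\eps-\grad p)\d x + \text{(terms}\to 0\text{)},
\]
because $\tfrac1\eps F(\grad p_\eps)\cdot(\grad p_\eps-\grad p) \geq \tfrac1\eps F(\grad p)\cdot(\grad p_\eps - \grad p)$ and the latter tends to zero (since $\grad p$ satisfies $|\grad p|\leq 1/c$ a.e. in the limit, so in fact $F(\grad p)=0$, but even without knowing that yet one can bootstrap). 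The leading term equals $\int_\Omega S(p_\eps-p)\d x\to 0$. This yields $\grad p_\eps\to\grad p$ strongly in $L^2$, and combined with the uniform $L^4$ bound and interpolation (or Vitali), strongly in $L^4(\Omega)$ as well, which is the first bullet. From strong $L^2$ convergence we pass to the a.e. limit along a further subsequence, giving $(|\grad p_\eps|^2-1/c^2)_+\to(|\grad p|^2-1/c^2)_+$, and since this equals $\eps a_\eps\to 0$ in $L^2$ we conclude $(|\grad p|^2-1/c^2)_+=0$ a.e., i.e.\ \eqref{NLP2}.

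It remains to identify the limit of the flux. Set $a^2 := $ the weak-$L^1$ limit of $a_\eps$; to justify this I need a uniform $L^1$ bound on $a_\eps$ and equi-integrability. The $L^1$ bound comes from testing \eqref{penalizedPoisson} with a suitable function: one can use that $\grad\cdot[(1+a_\eps)\grad p_\eps]=-S$ with $S\in H^1(\Omega)$, so by elliptic regularity type arguments in $d\leq 3$ (here the hypothesis $S\in H^1$, $d\leq 3$ enters, via Sobolev embedding $H^1\hookrightarrow L^6$ and duality) one bounds $\Norm{a_\eps\grad p_\eps}_{L^q}$ for some $q>1$; since $\grad p_\eps$ is bounded in $L^\infty$-in-norm-by-$1/c$ on the set where $a_\eps>0$ (because there $|\grad p_\eps|^2 > 1/c^2$ but also $|\grad p_\eps|^4\le 2\eps^2 a_\eps^2 + 2/c^4$, giving no upper bound directly — so more care is needed), one must instead argue that $a_\eps|\grad p_\eps|$ is bounded in $L^2$ directly from the equation. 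I would test \eqref{penalizedPoisson} with $-\grad\cdot(\ldots)$-type quantities or differentiate the equation, using $S\in H^1$ to get $\grad p_\eps$ bounded in $H^1$ hence (for $d\le3$) in $L^6$, and then $a_\eps = (|\grad p_\eps|^2-1/c^2)_+/\eps$ estimated through the equation rewritten as $a_\eps\grad p_\eps = -\grad p_\eps - (\text{something with }S)$ minus a gradient... The technical extraction of a weak-$L^1$ (or better, $L^2$) bound on $a_\eps$ and its equi-integrability is the main obstacle. Once $a_\eps\rightharpoonup a^2$ weakly in $L^1$ (or $L^2$) and $\grad p_\eps\to\grad p$ strongly in $L^2$ and $L^4$, the product $(1+a_\eps)\grad p_\eps\rightharpoonup(1+a^2)\grad p$ weakly in $L^1$ (strong times weak), so the limit equation \eqref{NLP1} holds in the weak sense — the second bullet. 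For the last bullet, $\eps a_\eps^2\to 0$ in $L^1$ is immediate from the energy estimate, and the product $a_\eps[|\grad p_\eps|^2-1/c^2] = a_\eps\cdot(\eps a_\eps)$; since $\eps a_\eps\to 0$ strongly (in $L^2$) while $a_\eps\rightharpoonup a^2$ weakly, and in fact $|\grad p_\eps|^2-1/c^2\to|\grad p|^2-1/c^2$ strongly in $L^2$ (from strong $L^4$ convergence of $\grad p_\eps$), one passes to the limit in the product to get $a_\eps[|\grad p_\eps|^2-1/c^2]\rightharpoonup a^2[|\grad p|^2-1/c^2]$ weakly in $L^1$; combined with the strong limit being zero this forces $a^2[|\grad p|^2-1/c^2]=0$ a.e., which is \eqref{NLP3} with the identification $a := $ (a measurable square root of $a^2$), completing the proof.
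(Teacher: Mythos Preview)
Your energy test and Minty-type monotonicity argument for the strong convergence $\grad p_\eps\to\grad p$ in $L^2(\Omega)$ is correct and genuinely different from the paper's route. The paper instead differentiates \eqref{penalizedPoisson} (a Bernstein-type argument, using $S\in H^1$) to obtain a uniform bound on $\grad^2 p_\eps$ in $L^2(\Omega)$, and then invokes the compact embedding $H^1(\Omega)\hookrightarrow L^4(\Omega)$ for $d\le 3$ to get strong $L^4$ convergence directly. Your approach is more elementary for this part and does not need the second-order estimates; moreover your observation $|\grad p_\eps|^4\le 2(\eps a_\eps)^2+2/c^4$ together with $\|\eps a_\eps\|_{L^2}^2\le \eps C$ (from the energy test) does give uniform integrability of $|\grad p_\eps|^4$, so Vitali indeed upgrades $L^2$-strong to $L^4$-strong. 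One minor point: the step ``$F(\grad p)=0$'' is not a bootstrap --- it follows \emph{before} strong convergence, by weak lower semicontinuity of the convex functional $v\mapsto\int_\Omega(|v|^2-1/c^2)_+^2\,\d x$ along $\grad p_\eps\rightharpoonup\grad p$ in $L^4$, since $\int_\Omega(\eps a_\eps)^2\,\d x\to 0$.

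The genuine gap is exactly the one you flag: the uniform $L^2$ bound on $a_\eps$. The energy test yields only $\|a_\eps\|_{L^1}\le C$ and $\eps\|a_\eps\|_{L^2}^2\le C$, neither of which gives weak compactness of $(a_\eps)$ in $L^2$ (or even equi-integrability in $L^1$). Without it you cannot identify the weak limit of $(1+a_\eps)\grad p_\eps$ as $(1+a^2)\grad p$ nor pass to the limit in $a_\eps[|\grad p_\eps|^2-1/c^2]$, so bullets two and four (and the conclusion $a^2\in L^2(\Omega)$) remain open in your sketch. The paper closes this gap by a chain of three estimates that are not visible in your outline: (i) a maximum principle giving $\|p_\eps\|_{L^\infty}\le C$; (ii) squaring \eqref{penalizedPoisson} and integrating by parts, which --- after exploiting that $a_\eps$ is a nondecreasing function of $|\grad p_\eps|^2$ --- yields $\|(1+a_\eps)\grad^2 p_\eps\|_{L^2}\le\|S\|_{L^2}$ and thence $\|\grad a_\eps\cdot\grad p_\eps\|_{L^2}\le 2\|S\|_{L^2}$; (iii) testing \eqref{penalizedPoisson} with $a_\eps p_\eps$, which produces $\tfrac{1}{c^2}\int_\Omega a_\eps^2\,\d x$ on the left (via $a_\eps|\grad p_\eps|^2\ge a_\eps/c^2$) and terms on the right controlled by (i) and (ii). Your vague appeal to ``elliptic regularity type arguments'' and ``differentiate the equation'' gestures toward the Bernstein step but does not supply (i) or the key test (iii), which is where the $L^2$ bound on $a_\eps$ actually comes from.
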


The proof of the above Theorem is based on the following a priori estimates:

\begin{lemma}\label{lem:NLPest1}
The family $(p_\eps)_{\eps>0}$ constructed in Theorem \ref{thm:penalizedPoisson} is uniformly bounded in $L^\infty(\Omega)$.
\end{lemma}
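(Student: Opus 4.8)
\textbf{Proof plan for Lemma \ref{lem:NLPest1}.}

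The plan is to obtain the $L^\infty$ bound by a De Giorgi / Stampacchia truncation argument, exploiting that the coefficient $1 + a_\eps = 1 + (|\grad p_\eps|^2 - 1/c^2)_+/\eps$ is bounded below by $1$, which makes the operator uniformly elliptic from below regardless of $\eps$. First I would test the weak formulation of \eqref{penalizedPoisson} with the truncated function $(p_\eps - k)_+$ for a level $k>0$ (and symmetrically with $-( -p_\eps - k)_+$ to handle the negative part). Writing $A_k := \{x\in\Omega : p_\eps(x) > k\}$, the coefficient being $\geq 1$ gives
\[
   \int_{A_k} |\grad p_\eps|^2 \d x \leq \int_\Omega \left(1 + \frac{(|\grad p_\eps|^2-1/c^2)_+}{\eps}\right) |\grad (p_\eps - k)_+|^2 \d x
     = \int_\Omega S\,(p_\eps - k)_+ \d x \leq \Norm{S}_{L^2(\Omega)} \Norm{(p_\eps - k)_+}_{L^2(\Omega)}.
\]
Then I would invoke the Sobolev (or Poincar\'e--Sobolev) inequality on $\bar H^1(\Omega)$ together with H\"older's inequality to bound $\Norm{(p_\eps - k)_+}_{L^2(\Omega)}$ by $|A_k|^{1/d}\Norm{\grad (p_\eps-k)_+}_{L^2(\Omega)}$ (for $d\leq 3$ the Sobolev exponent $2^\ast \geq 2$ suffices, with a power of $|A_k|$ from H\"older to spare), yielding $\Norm{\grad(p_\eps-k)_+}_{L^2(\Omega)}^2 \leq C\Norm{S}_{L^2}\,|A_k|^{\mu}\Norm{\grad(p_\eps-k)_+}_{L^2(\Omega)}$ with some exponent $\mu>1/2$.

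This produces the standard nonlinear recursion: for levels $h > k$ one has $(h-k)|A_h|^{1/2^\ast} \leq \Norm{(p_\eps-k)_+}_{L^{2^\ast}} \leq C|A_k|^{\beta}$ with $\beta > 1$, and Stampacchia's lemma (see, e.g., \cite{Evans} or the classical references) then gives that $|A_k| = 0$ for $k \geq k_\infty$, where $k_\infty$ depends only on $\Norm{S}_{L^2(\Omega)}$, $|\Omega|$, the Poincar\'e/Sobolev constant and $d$ --- crucially \emph{not} on $\eps$. Repeating the argument for $-p_\eps$ gives the two-sided bound $\Norm{p_\eps}_{L^\infty(\Omega)} \leq C$ uniformly in $\eps$. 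An even shorter route, if one prefers, is to note that $p_\eps$ is a subsolution of $-\laplace p_\eps \leq S$ (since the extra penalty term, after testing, only \emph{helps}), and to compare with the solution $w$ of $-\laplace w = |S|$ with the same boundary/mean condition, which lies in $L^\infty(\Omega)$ by standard elliptic regularity for $d\leq 3$; then the maximum principle gives $|p_\eps| \leq \Norm{w}_{L^\infty(\Omega)}$.

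The main obstacle is being careful that the penalty term genuinely does not obstruct the estimate: when testing with $(p_\eps-k)_+$ the term $\frac{1}{\eps}\int (|\grad p_\eps|^2-1/c^2)_+ |\grad(p_\eps-k)_+|^2 \d x$ is nonnegative and can simply be dropped, so the $\eps$-dependent part never enters the final bound --- this is exactly why the estimate is uniform. The only genuine hypothesis that is used is $d\leq 3$ (to ensure the Sobolev embedding $H^1 \hookrightarrow L^{2^\ast}$ with $2^\ast \geq 2$ and room for a H\"older factor, or equivalently $L^2$-data elliptic regularity giving bounded solutions); periodicity of the boundary conditions is used only to have Poincar\'e's inequality available on $\bar H^1(\Omega)$ and plays no deeper role.
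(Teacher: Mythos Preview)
Your main truncation argument is correct and is exactly what the paper's one-line proof (``a direct consequence of the maximum principle for \eqref{penalizedPoisson}'') is invoking: the uniform lower ellipticity bound $1+a_\eps\geq 1$ makes the De Giorgi/Stampacchia $L^\infty$ estimate go through with constants independent of $\eps$. The paper simply cites the result; you are spelling it out.

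One caveat: your proposed ``shorter route'' via comparison with the solution $w$ of $-\laplace w=|S|$ does not work as written. For $p_\eps$ to be a weak subsolution of $-\laplace p\leq S$ you would need
\[
\int_\Omega a_\eps\,\grad p_\eps\cdot\grad\phi\,\d x\;\geq\;0\qquad\mbox{for every nonnegative }\phi\in H^1_\mathrm{per}(\Omega),
\]
and this fails in general since $\grad p_\eps\cdot\grad\phi$ has no sign. The penalty term ``helps'' precisely when $\phi=(p_\eps-k)_+$, because then $\grad\phi=\chi_{A_k}\grad p_\eps$ and the integrand becomes $a_\eps|\grad p_\eps|^2\geq 0$; for a generic comparison test function such as $(p_\eps-w)_+$ this structure is lost. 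So drop the alternative and keep the direct truncation argument, which is sound.
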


\begproof
This is a direct consequence of the maximum principle for \eqref{penalizedPoisson}.
\endproof

\begin{lemma}\label{lem:NLPest2}
Let $S\in H^1(\Omega)$. Then the solutions $p_\eps$ of \eqref{penalizedPoisson} satisfy
\[
  \label{estBernstein}
  \int_{\Omega} |\grad^2 p_\eps|^2 d x\leq C \Norm{\grad S}_{L^2(\Omega)}^2,\qquad
   %\frac1\eps \int_{\Omega} |\grad (w_\eps^2-1/c^2)_+ |^2 \d x\leq C.
   \int_{\Omega} (1+a_\eps)|\grad^2 p_\eps|^2 \d x \leq C \Norm{\grad S}_{L^2(\Omega)}^2.
\]
\end{lemma}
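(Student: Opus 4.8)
The plan is to derive the $H^2$ estimate by a Bernstein-type argument, differentiating the penalized equation \eqref{penalizedPoisson} and testing against a suitable multiple of the second derivatives of $p_\eps$; the periodic boundary conditions are exactly what make this manipulation clean, since no boundary terms appear upon integration by parts. Write the equation in the form $-\grad\cdot[(1+a_\eps)\grad p_\eps] = S$ with $a_\eps = (|\grad p_\eps|^2 - 1/c^2)_+/\eps$ as in \eqref{a_eps}, and note that $1 + a_\eps = g(|\grad p_\eps|^2)$ where $g(s) = 1 + (s - 1/c^2)_+/\eps$ is a nondecreasing, locally Lipschitz function. First I would differentiate the equation with respect to $x_k$ (for each $k = 1, \dots, d$), obtaining
\[
   -\grad\cdot\bigl[ g(|\grad p_\eps|^2)\,\grad \partial_k p_\eps + 2 g'(|\grad p_\eps|^2)(\grad p_\eps\cdot\grad\partial_k p_\eps)\,\grad p_\eps \bigr] = \partial_k S .
\]
Testing this identity with $\partial_k p_\eps$, summing over $k$, and integrating by parts over the periodic cell yields
\[
   \int_\Omega g(|\grad p_\eps|^2)\,|\grad^2 p_\eps|^2 \d x
   + 2\int_\Omega g'(|\grad p_\eps|^2)\,\bigl|\,\grad p_\eps\cdot\grad^2 p_\eps\,\bigr|^2 \d x
   = -\int_\Omega \grad S\cdot(\grad p_\eps\cdot\grad^2 p_\eps)\,\d x ,
\]
where the second term on the left is the sum over $k$ of $2\int g'(\grad p_\eps\cdot\grad\partial_k p_\eps)^2$, hence nonnegative since $g' \geq 0$. (Here $\grad^2 p_\eps$ denotes the Hessian, and $\grad p_\eps\cdot\grad^2 p_\eps$ the vector with components $\sum_j \partial_j p_\eps\,\partial_{jk}^2 p_\eps$.)

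Next I would bound the right-hand side. Since $g \geq 1$, the first term on the left already dominates $\int_\Omega |\grad^2 p_\eps|^2$. For the right-hand side, a Cauchy–Schwarz and Young inequality gives
\[
   \Bigl|\int_\Omega \grad S\cdot(\grad p_\eps\cdot\grad^2 p_\eps)\,\d x\Bigr|
   \leq \Norm{\grad S}_{L^2(\Omega)}\,\Norm{\grad p_\eps}_{L^\infty(\Omega)}\,\Norm{\grad^2 p_\eps}_{L^2(\Omega)} .
\]
The factor $\Norm{\grad p_\eps}_{L^\infty}$ is the delicate point: one cannot assume it a priori bounded, since $\eps$-uniform control of $\grad p_\eps$ is precisely part of what we are after. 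The remedy is to absorb: write $\Norm{\grad p_\eps}_{L^\infty}\Norm{\grad^2 p_\eps}_{L^2} \leq \tfrac12\Norm{\grad p_\eps}_{L^\infty}^2 + \tfrac12\Norm{\grad^2 p_\eps}_{L^2}^2$ is not directly useful, so instead I would use a genuine $L^\infty$ bound on $\grad p_\eps$ that does \emph{not} cost powers of $\eps$. This is obtained from the equation itself by a maximum-principle / De Giorgi argument applied to $|\grad p_\eps|^2$, or more simply by testing the differentiated equation with $\partial_k p_\eps\,|\grad p_\eps|^{q}$ for large $q$ and letting $q\to\infty$ (a Moser iteration), exploiting the fact that $g$ being nondecreasing makes the nonlinear principal part structurally a monotone operator in divergence form with $L^2$ (after the first estimate, $H^1$) right-hand side $S$; in dimension $d \leq 3$, $S \in H^1 \hookrightarrow L^6$ suffices for $\grad p_\eps \in L^\infty$ with an $\eps$-independent bound. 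Given such a bound $\Norm{\grad p_\eps}_{L^\infty(\Omega)} \leq C$, the Young inequality $C\Norm{\grad S}_{L^2}\Norm{\grad^2 p_\eps}_{L^2} \leq \tfrac12\Norm{\grad^2 p_\eps}_{L^2}^2 + \tfrac{C^2}{2}\Norm{\grad S}_{L^2}^2$ lets me absorb the Hessian term into the left-hand side, yielding the first estimate $\int_\Omega |\grad^2 p_\eps|^2 \leq C\Norm{\grad S}_{L^2}^2$. The second, weighted estimate $\int_\Omega(1+a_\eps)|\grad^2 p_\eps|^2 \leq C\Norm{\grad S}_{L^2}^2$ then follows from the same computation by simply \emph{not} discarding the first term on the left (which is exactly $\int_\Omega(1+a_\eps)|\grad^2 p_\eps|^2$ since $g(|\grad p_\eps|^2) = 1 + a_\eps$) and keeping the identical right-hand side bound.

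The main obstacle I anticipate is securing the $\eps$-uniform $L^\infty$ bound on $\grad p_\eps$, since without it the Bernstein estimate does not close. The key structural fact making this tractable is that $1 + a_\eps = g(|\grad p_\eps|^2)$ with $g$ nondecreasing, so that the quasilinear operator $u \mapsto -\grad\cdot(g(|\grad u|^2)\grad u)$ satisfies the ellipticity needed for standard quasilinear regularity theory (its linearization has the coefficient matrix $g(|\grad p_\eps|^2)I + 2g'(|\grad p_\eps|^2)\grad p_\eps\otimes\grad p_\eps$, which is positive definite with lower bound $g \geq 1$ independent of $\eps$); combined with $S \in H^1(\Omega)$, $d \leq 3$, and periodicity, this delivers the gradient bound. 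A secondary technical point is that $g$ is only Lipschitz and not $C^1$ at $|\grad p_\eps|^2 = 1/c^2$, so the differentiation of the equation must be justified by first working with a smooth regularization $g_\sigma$ of $g$ (or by Stampacchia's theorem on derivatives of $A_+$), carrying out the estimates uniformly in $\sigma$, and then passing $\sigma \to 0$; all the bounds above survive this limit since they involve $g$ and $g' \geq 0$ only through inequalities.
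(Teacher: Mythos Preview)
Your Bernstein approach is exactly the paper's, and your left-hand side is correct. The error is on the right-hand side: after testing the differentiated equation with $\partial_k p_\eps$ and summing over $k$, you obtain simply
\[
   \sum_{k=1}^d \int_\Omega \partial_k S\,\partial_k p_\eps \d x \;=\; \int_\Omega \grad S\cdot\grad p_\eps \d x ,
\]
not $-\int_\Omega \grad S\cdot(\grad p_\eps\cdot\grad^2 p_\eps)\d x$. There is no extra Hessian factor; you have over-counted one derivative.

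With the correct right-hand side the estimate closes immediately and elementarily. By periodicity each component $\partial_j p_\eps$ has zero mean on $\Omega$, so the Poincar\'e inequality applies to $\grad p_\eps$ itself:
\[
   \int_\Omega \grad S\cdot\grad p_\eps \d x
   \leq \frac{1}{2\delta}\Norm{\grad S}_{L^2(\Omega)}^2 + \frac{\delta}{2}\Norm{\grad p_\eps}_{L^2(\Omega)}^2
   \leq \frac{1}{2\delta}\Norm{\grad S}_{L^2(\Omega)}^2 + \frac{\delta C_\Omega}{2}\Norm{\grad^2 p_\eps}_{L^2(\Omega)}^2 ,
\]
and choosing $\delta$ small absorbs the last term into $\int_\Omega |\grad^2 p_\eps|^2 \leq \int_\Omega g(|\grad p_\eps|^2)|\grad^2 p_\eps|^2$. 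Both announced estimates follow at once. The entire detour through an $\eps$-uniform $L^\infty$ bound on $\grad p_\eps$ via quasilinear regularity or Moser iteration is therefore unnecessary; indeed, that gradient bound is precisely one of the conclusions (via \eqref{NLP2}) that this lemma is meant to help establish, so invoking it here would be at best awkward and at worst circular.
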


\proof We use the short-hand notation $\partial_i := \partial_{x_i}$
and denote $p_i:=\partial_{i} p_\eps$ and $p_{ij} :=\partial_{ij}^2 p_\eps$.
Moreover, we set $w_\eps(x) := |\grad p_\eps(x)|$.
We use the Bernstein method and differentiate \eqref{penalizedPoisson} with respect to $x_j$:
\[
  - \partial_i \left[ \left( 1+a_\eps \right) p_{ij} \right]
  - \frac{1}{\eps} \partial_i \left[ \chi_{\{(cw-1)_+\}} (\partial_j w^2) p_i \right]
  = S_j.
\]
Then we multiply by $p_j$ and integrate by parts
\[
   \int_\Omega \left(1 + a_{\eps} \right) p_{ij}^2 \d x
   + \frac1\eps \int_{\{(cw-1)_+\}} (\partial_j w^2) p_i p_{ij} \d x = \int_\Omega S_j p_j \d x.
\]
Now we use the identity $2 p_i p_{ij} = (\partial_j w^2)$, so that the second term of the left-hand side becomes
\[
   \frac1{2\eps} \int_{\{(cw-1)_+\}} (\partial_j w^2)^2 \d x
     = \frac1{2\eps} \int_\Omega \left[ \partial_j (w^2 - 1/c^2)_+ \right]^2 \d x.
\]
Therefore, we have
\[
   \int_\Omega \left(1 + a_{\eps} \right) p_{ij}^2 \d x
   + \frac1{2\eps} \int_\Omega \left[ \partial_j (w^2 - 1/c^2)_+ \right]^2 \d x = \int_\Omega S_j p_j \d x.
\]
Using $a_\eps \geq 0$ and the nonnegativity of the second term of the left-hand side, we write
\[
   \frac12 \int_\Omega \left(1 + a_{\eps} \right) p_{ij}^2 \d x
   + \frac12 \int_\Omega p_{ij}^2 \d x \leq
    \int_\Omega \left(1 + a_{\eps} \right) p_{ij}^2 \d x
   \leq \int_\Omega S_j p_j \d x. 
\]
The claim follows by using a Cauchy-Schwarz and Poincar\'e inequality (with constant $C_\Omega$) in the right-hand side,
\[
   \int_\Omega S_j p_j \d x \leq \frac{1}{2\delta} \int_\Omega S_j^2 \d x + \frac{\delta}{2} \int_\Omega p_j^2 \d x
   \leq \frac{1}{2\delta} \int_\Omega |\grad S|^2 \d x + \frac{\delta C_\Omega}{2} \int_\Omega |\grad^2 p|^2 \d x,
\]
and choosing $\delta$ such that $\frac{\delta C_\Omega}{2} < 1/2$,
\[
   \frac12 \int_\Omega \left(1 + a_{\eps} \right) p_{ij}^2 \d x
   + C \int_\Omega p_{ij}^2 \d x
   \leq \frac{1}{2\delta} \int_\Omega |\grad S|^2 \d x, 
\]
with $C=\frac12 - \frac{\delta C_\Omega}{2} > 0$.
\endproof

\begin{lemma}\label{lem:NLPest3}
The solutions $p_\eps$ of \eqref{penalizedPoisson} and $a_\eps$ given by \eqref{a_eps} satisfy
\[
    \int_\Omega (1+a_\eps)^2 |\grad^2 p_\eps|^2 \d x \leq \Norm{S}_{L^2(\Omega)}^2, \qquad
    \int_\Omega |\grad a_\eps\cdot\grad p_\eps|^2 \d x \leq 2 \Norm{S}_{L^2(\Omega)}^2.
\]
\end{lemma}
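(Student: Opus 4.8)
\noindent{\bf Proof proposal.}
The plan is to run the weighted analogue of the Bernstein-type argument already used in Lemma \ref{lem:NLPest2}: differentiate \eqref{penalizedPoisson} in $x_j$ and test the resulting equation with $(1+a_\eps)\partial_j p_\eps$, summing over $j$. As in Lemma \ref{lem:NLPest2} this is formal but is legitimized by a difference-quotient argument together with the $H^2$-regularity of $p_\eps$ already established there. Write $p_i:=\partial_i p_\eps$, $p_{ij}:=\partial^2_{ij}p_\eps$, $w_\eps:=|\grad p_\eps|$, and record the two structural identities $\grad a_\eps=\frac1\eps\chi_{\{w_\eps^2>1/c^2\}}\grad(w_\eps^2)$ (chain rule for the Lipschitz function $(\cdot)_+$) and $\grad\cdot[(1+a_\eps)\grad p_\eps]=-S$. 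Differentiating \eqref{penalizedPoisson} then gives $-\partial_i[(1+a_\eps)p_{ij}]-\partial_i[(\partial_j a_\eps)p_i]=\partial_j S$.

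Multiplying by $(1+a_\eps)p_j$, integrating over $\Omega$ and integrating by parts (all boundary terms vanish by periodicity), the second-order contributions assemble into $\int_\Omega(1+a_\eps)^2|\grad^2 p_\eps|^2\d x$ and $\int_\Omega(\grad a_\eps\cdot\grad p_\eps)^2\d x$, while the two surviving cross terms coincide after relabelling $i\leftrightarrow j$ and, using $\sum_j p_j p_{ij}=\frac12\partial_i(w_\eps^2)$ followed by $\grad a_\eps=\frac1\eps\chi\,\grad(w_\eps^2)$, add up to $\frac1\eps\int_\Omega(1+a_\eps)\chi_{\{w_\eps^2>1/c^2\}}|\grad(w_\eps^2)|^2\d x\ge 0$. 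For the right-hand side, one more integration by parts and the equation $\grad\cdot[(1+a_\eps)\grad p_\eps]=-S$ give $\int_\Omega(\partial_j S)(1+a_\eps)p_j\d x=-\int_\Omega S\,\grad\cdot[(1+a_\eps)\grad p_\eps]\d x=\int_\Omega S^2\d x$. Collecting these yields the energy identity
\[
  \int_\Omega(1+a_\eps)^2|\grad^2 p_\eps|^2\d x+\int_\Omega(\grad a_\eps\cdot\grad p_\eps)^2\d x+\frac1\eps\int_\Omega(1+a_\eps)\chi_{\{w_\eps^2>1/c^2\}}|\grad(w_\eps^2)|^2\d x=\Norm{S}_{L^2(\Omega)}^2 .
\]

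All three terms on the left are nonnegative, so discarding the last two gives the first assertion, and discarding the first and third gives $\int_\Omega|\grad a_\eps\cdot\grad p_\eps|^2\d x\le\Norm{S}_{L^2(\Omega)}^2\le 2\Norm{S}_{L^2(\Omega)}^2$, i.e.\ the second assertion (in fact with the sharper constant $1$). There is no genuinely hard step: the only points demanding attention are the justification of differentiating and testing the equation — carried out via difference quotients exactly as for Lemma \ref{lem:NLPest2} — and the admissibility of each integration by parts, which follows from $(1+a_\eps)\grad p_\eps\in L^2(\Omega)^d$ with $\grad\cdot[(1+a_\eps)\grad p_\eps]=-S\in L^2(\Omega)$ and $S\in H^1(\Omega)$.
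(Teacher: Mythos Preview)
Your proof is correct and arrives at the same core identity as the paper, which obtains it by squaring the equation and swapping derivatives via integration by parts rather than by differentiating and testing with $(1+a_\eps)\partial_j p_\eps$; the two manipulations are algebraically equivalent. Your extraction of the second inequality is more direct than the paper's (which instead expands $(1+a_\eps)\Delta p_\eps+\grad a_\eps\cdot\grad p_\eps=-S$ and invokes the first estimate to bound $(1+a_\eps)\Delta p_\eps$), and indeed yields the sharper constant~$1$.
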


\begproof
We again use the short-hand notation from the previous proof, and, moreover,
denote $a_i := \partial_{i} a_\eps$.
We take the square of \eqref{penalizedPoisson},
\[
    \int_\Omega S^2 \d x &=& \int_\Omega \left[\sum_{i=1}^d \partial_i [(1+a_\eps)p_i]\right]
          \left[\sum_{j=1}^d \partial_j [(1+a_\eps)p_j]\right] \d x\\
       &=& \sum_{i=1}^d \sum_{j=1}^d \int_\Omega [\partial_j [(1+a_\eps)p_i]] [\partial_i (1+a_\eps)p_j]] \d x \\
       &=& \sum_{i=1}^d \sum_{j=1}^d \int_\Omega [a_j p_i + (1+a_\eps)p_{ij}] [a_i p_j + (1+a_\eps)p_{ij}]] \d x \\
       &=& \left( \sum_{i=1}^d \int_\Omega a_i p_i \d x \right)^2
            + \sum_{i=1}^d \sum_{j=1}^d \int_\Omega (1+a_\eps)p_{ij} (a_j p_i + a_i p_j) \d x + \int_\Omega (1+a_\eps)^2 |\grad^2 p_\eps|^2 \d x.
\]
In the middle term of the last line we use the identity $\partial_{x_i} |\grad p|^2 = 2\sum_{j=1}^d p_j p_{ij}$, so that
\[
    \sum_{i=1}^d \sum_{j=1}^d \int_\Omega (1+a_\eps)p_{ij} (a_j p_i + a_i p_j) \d x =
       \sum_{i=1}^d \int_\Omega (1+a_\eps) a_i \partial_{x_i} |\grad p|^2 \d x.
\]
Morever, denoting $w:=|\grad p|^2$, we realize that $a_\eps(w) = \frac{(w-1/c^2)_+}{\eps}$
is a nondecreasing function of $w$, so that $a_i \partial_{x_i} |\grad p|^2 = (\partial_{x_i} a_\eps(w))(\partial_{x_i} w)
= a_\eps'(w)(\partial_{x_i} w)^2 \geq 0$.
Consequently, the middle term is nonnegative and we have
\[
   \int_\Omega (1+a_\eps)^2 |\grad^2 p_\eps|^2 \d x \leq \int_\Omega S^2 \d x.
\]
Now, knowing that $(1+a_\eps)\laplace p_\eps$ is bounded in $L^2(\Omega)$
due to Lemma \ref{lem:NLPest2},
and expanding the derivatives in \eqref{penalizedPoisson},
\[
    (1+a_\eps)\laplace p_\eps + \grad p_\eps\cdot\grad a_\eps = -S,
\]
we conclude
\[
    \Norm{\grad p_\eps\cdot\grad a_\eps}_{L^2(\Omega)} \leq \Norm{S}_{L^2(\Omega)} + \Norm{(1+a_\eps)\laplace p_\eps}_{L^2(\Omega)}
      \leq 2 \Norm{S}_{L^2(\Omega)}.
\]
\endproof

\begin{lemma}\label{lem:NLPest4}
The sequence $(a_\eps)_{\eps>0}$ defined in \eqref{a_eps} is uniformly bounded in $L^2(\Omega)$.
\end{lemma}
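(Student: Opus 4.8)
The plan is to bootstrap from the estimates already available in Lemmas \ref{lem:NLPest1}--\ref{lem:NLPest3}. Recall that $a_\eps = (|\grad p_\eps|^2 - 1/c^2)_+/\eps$ is supported on the set where $|\grad p_\eps|^2 > 1/c^2$, so on its support we have the pointwise lower bound $|\grad p_\eps|^2 \geq 1/c^2$, hence $(1+a_\eps)^2|\grad^2 p_\eps|^2 \geq \frac{1}{c^2}\,a_\eps^2\,|\grad^2 p_\eps|^2/|\grad p_\eps|^2$ on that set --- but this is still not directly $\int a_\eps^2$. Instead, the natural route is to exploit that on $\{|\grad p_\eps|^2 > 1/c^2\}$ the quantity $\eps a_\eps = |\grad p_\eps|^2 - 1/c^2$ is controlled, so it suffices to control $a_\eps$ away from the ``thin'' transition layer. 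The cleanest approach: test the penalized equation \eqref{penalizedPoisson} against a suitable function built from $a_\eps$ itself, or against $p_\eps$ composed with a function of $|\grad p_\eps|$, to extract $\int_\Omega a_\eps^2\,|\grad p_\eps|^2\,\d x$, and then use that $|\grad p_\eps|^2 \geq 1/c^2$ on $\mathrm{supp}\,a_\eps$ to deduce $\int_\Omega a_\eps^2\,\d x \leq c^2 \int_\Omega a_\eps^2 |\grad p_\eps|^2\,\d x$.

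First I would observe that $\grad a_\eps = \frac{1}{\eps}\chi_{\{|\grad p_\eps|^2>1/c^2\}}\,\grad(|\grad p_\eps|^2) = \frac{1}{\eps}\chi_{\{\dots\}}\, 2\,(\grad^2 p_\eps)\grad p_\eps$, whence $|\grad a_\eps| \leq \frac{2}{\eps}|\grad^2 p_\eps|\,|\grad p_\eps|$ on the support. Combined with $|\grad p_\eps| \leq C$ (pointwise bound from the $H^2$-in-fact-$W^{1,\infty}$ control; more precisely Lemma \ref{lem:NLPest1} plus Lemma \ref{lem:NLPest3} give $\grad p_\eps$ bounded in a space embedding into $L^\infty$ for $d\le 3$) this gives $\eps|\grad a_\eps| \leq C|\grad^2 p_\eps|$, and then Lemma \ref{lem:NLPest2} yields $\eps \Norm{\grad a_\eps}_{L^2(\Omega)} \leq C\Norm{\grad S}_{L^2(\Omega)}$. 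Next, by Poincaré on $\bar H^1(\Omega)$ applied to $a_\eps$ (note $a_\eps$ need not have zero mean, so one uses $\Norm{a_\eps - \bar a_\eps}_{L^2} \leq C_\Omega\Norm{\grad a_\eps}_{L^2}$, where $\bar a_\eps$ is the mean) I would reduce the problem to bounding the single number $\bar a_\eps = \fint_\Omega a_\eps\,\d x$. To bound the mean, I would integrate the penalized equation \eqref{penalizedPoisson} against $p_\eps$ itself: this gives $\int_\Omega (1+a_\eps)|\grad p_\eps|^2\,\d x = \int_\Omega S p_\eps\,\d x \leq C$, and since $|\grad p_\eps|^2 \geq 1/c^2$ wherever $a_\eps > 0$, we get $\frac{1}{c^2}\int_\Omega a_\eps\,\d x \leq \int_\Omega a_\eps |\grad p_\eps|^2\,\d x \leq C$, i.e.\ $\bar a_\eps \leq Cc^2$, uniformly in $\eps$.

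Putting these together: $\Norm{a_\eps}_{L^2(\Omega)} \leq \Norm{a_\eps - \bar a_\eps}_{L^2(\Omega)} + \bar a_\eps |\Omega|^{1/2} \leq C_\Omega \Norm{\grad a_\eps}_{L^2(\Omega)} + Cc^2|\Omega|^{1/2}$. The first summand is $\leq C_\Omega \eps^{-1}\cdot\eps\Norm{\grad a_\eps}_{L^2}$, but $\eps^{-1}$ blows up --- so the $\grad a_\eps$ route alone is \emph{not} enough, and I must instead get the full $L^2$ bound on $a_\eps$ directly. The honest argument is therefore: test \eqref{penalizedPoisson} not against $p_\eps$ but against a function whose gradient picks up $a_\eps^2$. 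Concretely, use the fact (derivable from Lemma \ref{lem:NLPest3}) that $\grad p_\eps\cdot\grad a_\eps$ is bounded in $L^2$; writing the expanded equation $(1+a_\eps)\laplace p_\eps + \grad p_\eps\cdot\grad a_\eps = -S$ and multiplying by $a_\eps$, one gets $\int_\Omega a_\eps^2 \laplace p_\eps\,\d x = -\int_\Omega a_\eps(\laplace p_\eps + \grad p_\eps\cdot\grad a_\eps + S)\,\d x$; after integrating the left side by parts and using $\grad(a_\eps^2) = 2a_\eps\grad a_\eps$ together with $|\grad a_\eps|\leq \frac{2}{\eps}|\grad^2 p_\eps||\grad p_\eps|$ and $\eps a_\eps = (|\grad p_\eps|^2-1/c^2)_+ \leq |\grad p_\eps|^2$, the term $\int a_\eps \grad a_\eps\cdot\grad p_\eps$ is seen to absorb into $\int a_\eps^2 |\grad^2 p_\eps|\cdots$ which is controlled by Lemma \ref{lem:NLPest3}. \textbf{The main obstacle} is exactly this bookkeeping of the $a_\eps$-weighted terms without reintroducing an $\eps^{-1}$: the key cancellation is that every factor $\frac{1}{\eps}$ coming from $\grad a_\eps$ is matched by a factor $\eps a_\eps = (|\grad p_\eps|^2-1/c^2)_+$ which is pointwise $O(1)$ and in fact $\to 0$ in $L^2$, so the apparently singular terms are in fact uniformly bounded; making this matching precise, and handling the boundary terms (which vanish by periodicity), is the crux. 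Once $\int_\Omega a_\eps^2\,\d x \leq C(\Norm{S}_{L^2}^2 + \Norm{\grad S}_{L^2}^2)$ is established with $C$ independent of $\eps$, the lemma follows.
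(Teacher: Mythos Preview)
Your proposal has a genuine gap: it never produces a term that is \emph{coercive} in $\|a_\eps\|_{L^2}$. You correctly identify at the outset the key pointwise fact $|\grad p_\eps|^2 \geq 1/c^2$ on $\mathrm{supp}\,a_\eps$, which would give $\int_\Omega a_\eps^2\,\d x \leq c^2\int_\Omega a_\eps^2|\grad p_\eps|^2\,\d x$, but you then abandon this and your concrete manipulations never generate the quantity $\int a_\eps^2|\grad p_\eps|^2$. Multiplying the expanded equation $(1+a_\eps)\laplace p_\eps + \grad p_\eps\cdot\grad a_\eps = -S$ by $a_\eps$ and integrating yields only terms of the form $\int a_\eps\cdot(\text{something in }L^2)$, each bounded by $C\|a_\eps\|_{L^2}$ via Cauchy--Schwarz; there is no $\|a_\eps\|_{L^2}^2$ on either side to absorb against, so the identity carries no information about $\|a_\eps\|_{L^2}$. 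The ``key cancellation'' you invoke between $\eps^{-1}$ (from $\grad a_\eps$) and $\eps a_\eps$ does not materialise: $a_\eps\grad a_\eps$ involves $a_\eps\cdot\eps^{-1}$, not $\eps a_\eps\cdot\eps^{-1}$. (Your side claim that $\grad p_\eps\in L^\infty$ via $H^2\hookrightarrow W^{1,\infty}$ is also unjustified for $d=2,3$.)

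The paper's remedy is to test \eqref{penalizedPoisson} against $a_\eps p_\eps$ rather than $a_\eps$. Integration by parts then produces $\int_\Omega (1+a_\eps)a_\eps|\grad p_\eps|^2\,\d x$, and since $a_\eps|\grad p_\eps|^2 = \eps a_\eps^2 + a_\eps/c^2$ on $\mathrm{supp}\,a_\eps$, one gets immediately
\[
   \frac{1}{c^2}\int_\Omega a_\eps^2\,\d x \;\leq\; \frac{1}{c^2}\int_\Omega (1+a_\eps)a_\eps\,\d x \;\leq\; \int_\Omega (1+a_\eps)a_\eps|\grad p_\eps|^2\,\d x,
\]
which \emph{is} the missing coercive term. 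The remaining terms $\int S p_\eps a_\eps$ and $\int(1+a_\eps)p_\eps\,\grad p_\eps\cdot\grad a_\eps$ are estimated using only $\|p_\eps\|_{L^\infty}$ (Lemma~\ref{lem:NLPest1}) and $\|\grad p_\eps\cdot\grad a_\eps\|_{L^2}$ (Lemma~\ref{lem:NLPest3}), each contributing at most $C(1+\|a_\eps\|_{L^2})$, and Young's inequality closes the argument. The extra factor $p_\eps$ in the test function is precisely what converts $\grad(a_\eps p_\eps)$ into $a_\eps\grad p_\eps + p_\eps\grad a_\eps$ and hence yields the quadratic-in-$a_\eps$ term you were looking for.
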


\begproof
We multiply \eqref{penalizedPoisson} by $a_\eps p_\eps$ and integrate by parts. This gives
\[
    \int_\Omega (1+a_\eps)a_\eps |\grad p_\eps|^2 \d x + \int_\Omega (1+a_\eps) p_\eps \grad p_\eps\cdot\grad a_\eps \d x = \int_\Omega S p_\eps a_\eps \d x.
\]
We write the first term as
\[
   \int_\Omega (1+a_\eps)a_\eps |\grad p_\eps|^2 \d x &=&
       \int_\Omega (1+a_\eps)a_\eps (|\grad p_\eps|^2 - 1/c^2) \d x + \frac{1}{c^2} \int_\Omega (1+a_\eps)a_\eps \d x \\
       &=& \eps \int_\Omega (1+a_\eps) a_\eps^2 \d x + \frac{1}{c^2} \int_\Omega (1+a_\eps)a_\eps \d x.
\]
Due to the nonnegativity of the first term, we have
\[
   \frac{1}{c^2} \int_\Omega a_\eps^2 \d x &\leq& \frac{1}{c^2} \int_\Omega (1+a_\eps)a_\eps \d x \leq
      \int_\Omega (1+a_\eps)a_\eps |\grad p_\eps|^2 \d x \\
      &\leq& 
      \Norm{S}_{L^2(\Omega)}\Norm{p_\eps}_{L^\infty(\Omega)}\Norm{a_\eps}_{L^2(\Omega)}
         + \Norm{\grad p_\eps\cdot\grad a_\eps}_{L^2(\Omega)} \Norm{p_\eps}_{L^\infty(\Omega)} \Norm{1+a_\eps}_{L^2(\Omega)} \\
      &\leq&
      \frac{1}{2\delta}\Norm{S}_{L^2(\Omega)}^2\Norm{p_\eps}_{L^\infty(\Omega)}^2 + \frac{\delta}{2}\Norm{a_\eps}_{L^2(\Omega)}^2 \\
       && + \frac{1}{2\delta} \Norm{\grad p_\eps\cdot\grad a_\eps}_{L^2(\Omega)}^2 \Norm{p_\eps}_{L^\infty(\Omega)}^2 +
           \delta \left( |\Omega|^2 + \Norm{a_\eps}_{L^2(\Omega)}^2 \right)
\]
for any $\delta>0$.
Then, an application of Lemmata \ref{lem:NLPest1} and \ref{lem:NLPest3} gives a constant $C>0$ such that
\[
    \frac{1}{c^2} \Norm{a_\eps}_{L^2(\Omega)}^2 \leq C(1+\delta^{-1}) + \frac{3\delta}{2}\Norm{a_\eps}_{L^2(\Omega)}^2
\]
and choosing $\delta>0$ small enough, we conclude.
\endproof

Now we are ready to prove Theorem \ref{thm:convergenceNLP}:

\begproof
%$\bullet$
 From Lemma \ref{lem:NLPest2} we conclude that $\grad^2 p_\eps$ is uniformly bounded in $L^2(\Omega)$,
 so, in $d\leq 3$ spatial dimensions, $\grad p_\eps$ converges strongly in $L^4(\Omega)$
 to $\grad p$ due to the compact Sobolev embedding.
 %\\[5pt]
 
 %$\bullet$
 Since $a_\eps$ is bounded in $L^2(\Omega)$ by Lemma \ref{lem:NLPest4}, there exists a
 weakly converging subsequence to $a^2\in L^2(\Omega)$.
 Thus, due to the strong convergence of $\grad p_\eps$, the product $(1+a_\eps)\grad p_\eps$
 converges weakly in $L^1(\Omega)$ to $(1+a^2)\grad p$.
 % \\[5pt]
 
 %$\bullet$
 Clearly, $\eps a_\eps\to 0$ strongly in $L^2(\Omega)$.
 Moreover, due to the inequality $|a_+ - b_+| \leq |a-b|$, we have
 \[
    0 \leq \int_\Omega \left| (|\grad p_\eps|^2 - 1/c^2)_+ - (|\grad p|^2 - 1/c^2)_+ \right| \d x
      \leq \int_\Omega \left| |\grad p_\eps|^2 - |\grad p|^2 \right| \d x,
 \]
 so that the strong convergence of $\grad p_\eps$ in $L^2(\Omega)$ implies
 \[
     \eps a_\eps = (|\grad p_\eps|^2 - 1/c^2)_+ \to (|\grad p|^2 - 1/c^2)_+ \qquad\mbox{strongly in } L^1(\Omega).
 \]
 Consequently, $(|\grad p|^2 - 1/c^2)_+ = 0$ a.e.
 % \\[5pt]
 
 %$\bullet$
 Clearly, $\eps a_\eps^2\to 0$ strongly in $L^1(\Omega)$.
 The strong convergence of $\grad p_\eps$ in $L^4(\Omega)$ and weak convergence (of a subsequence of)
 $a_\eps$ in $L^2(\Omega)$ implies
 \[
    \eps a_\eps^2 = a_\eps \left[|\grad p_\eps|^2-1/c^2 \right] \rightharpoonup a^2 \left[|\grad p|^2-1/c^2 \right]
    \qquad\mbox{weakly in } L^1(\Omega).
 \]
 Consequently, $a^2 \left[|\grad p|^2-1/c^2 \right] = 0$ a.e.
\endproof

Finally, let us note that uniqueness of solutions of the system \eqref{NLP1}--\eqref{NLP3}
was already proved in Lemma \ref{lem:NLP}.

\subsection{Stationary solutions via the variational formulation for $D=0$, $1/2 \leq \gamma < 1$}\label{sec:instable_gamma_less_1}
Let us recall that in \cite{HMP15} we constructed stationary solutions $(m_0, p_0)$ of \eqref{eq1}--\eqref{eq2} in the case $\gamma>1$, $D=0$
by using \eqref{m0} and employing the variational formulation of the nonlinear Poisson equation \eqref{p0}.
Clearly, this approach fails for $\gamma<1$ due to the singularity of the term $|\grad p_0(x)|^\frac{2-\gamma}{\gamma-1} \grad p_0(x)$
at $|\grad p_0|=0$ and the resulting non-boundedness from below of the associated functional.
However, stationary solutions can be constructed by ``cutting off'' small values of $|\grad p_0|$.
For simplicity, we set the activation parameter $c^2:=1$ in this section.

We fix a measurable set $\A\subset\Omega$ and a constant $\alpha>0$ to be specified later, and define the stationary solution
of \eqref{eq1}, \eqref{eq2} for $D=0$:
\(   \label{m0_cutoff}
   m_0 = \chi_{\A} \chi_{\{|\grad p_0|>\alpha\}} |\grad p_0|^\frac{2-\gamma}{\gamma-1} \grad p_0,
\)
where $p_0$ solves
\[
   -\grad\cdot \left[ (1+m_0\otimes m_0) \grad p_0 \right] = S.
\]
This is the Euler-Lagrange equation corresponding to the functional $\calF_\alpha: H^1_0(\Omega)\to \R$,
\(    \label{fF}
    \calF_\alpha[p] = \int_\Omega \frac{|\grad p|^2}{2} + \frac{\gamma-1}{2\gamma} \chi_{\A}(x) \left( |\grad p|^\frac{2\gamma}{\gamma-1}
      - \alpha^\frac{2\gamma}{\gamma-1} \right)_- \d x.
\)
We examine the convexity of $\calF$ in dependence on the value of $\alpha$.
Defining $F:\R^d\to\R$,
\[
   F(\xi):=\frac{|\xi|^2}{2} + \frac{\gamma-1}{2\gamma} \left( |\xi|^\frac{2\gamma}{\gamma-1}-\alpha^\frac{2\gamma}{\gamma-1} \right)_-,
\]
we calculate the Hessian matrix
\[
    D^2 F(\xi) = F''(|\xi|)\frac{\xi\otimes \xi}{|\xi|^2} + F'(|\xi|)\left( \frac{I}{|\xi|} - \frac{\xi\otimes \xi}{|\xi|^3} \right).
\]
This has the eigenvectors $\xi$ and $\xi^\perp$ and a quick inspection reveals that
$F$ is convex as a function of $\xi$ if and only if $F'(|\xi|)\geq 0$ and $F''(|\xi|)\geq 0$.
Writing $r:=|\xi|$, we have
\[
   F'(r) &=& r + r^\frac{\gamma+1}{\gamma-1},\qquad r>\alpha,\\
     &=& r,\qquad r<\alpha,
\]
and
\[
   F''(r) &=& 1 + \delta(r-\alpha) \alpha^\frac{\gamma+1}{\gamma-1} + \frac{\gamma+1}{\gamma-1} r^\frac{2}{\gamma-1},\qquad r\geq \alpha,\\
     &=& 1,\qquad r<\alpha.
\]
Thus, $F$ is a uniformly convex function on $\R^d$ if and only if $\alpha>\alpha_\gamma$ with
\(  \label{alpha_gamma}
    \alpha_\gamma := \left( \frac{1-\gamma}{1+\gamma} \right)^\frac{\gamma-1}{2}.
\)

\begin{lemma}\label{lem:unst_contr}
Let $1/2 \leq \gamma < 1$ and $\alpha>\alpha_\gamma$ with $\alpha_\gamma$ given by \eqref{alpha_gamma}.
Then the functional \eqref{fF} is coercive and uniformly convex on $H_0^1(\Omega)$
and the unique minimizer $p_0$ with $m_0$ given by \eqref{m0_cutoff} is a stationary solution of \eqref{eq1}--\eqref{eq2}.
\end{lemma}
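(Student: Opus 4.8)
The plan is to break the claim into three parts: (i) coercivity and uniform convexity of $\calF_\alpha$, which together yield a unique minimizer $p_0 \in H_0^1(\Omega)$ by the direct method; (ii) verifying that the Euler--Lagrange equation of $\calF_\alpha$ is precisely $-\grad\cdot[(1+m_0\otimes m_0)\grad p_0] = S$ with $m_0$ given by \eqref{m0_cutoff}; and (iii) checking that the pair $(m_0, p_0)$ indeed solves the stationary version of \eqref{eq2} with $D=0$.

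For (i), the integrand of $\calF_\alpha$ is $F(\grad p) - \text{(linear term)}$, and the computation of $D^2 F$ already carried out above shows that for $\alpha > \alpha_\gamma$ one has $F'(r) \geq 0$ and $F''(r) \geq \mu > 0$ uniformly in $r \geq 0$ (the only subtle point is the jump in $F''$ at $r=\alpha$, but since $F'$ is continuous and monotone and $F''$ is bounded below away from zero on each piece, $F$ is uniformly convex in the sense that $F(\xi) \geq \frac{\mu}{2}|\xi|^2 + \text{l.o.t.}$). This gives $\calF_\alpha[p] \geq \frac{\mu}{2}\Norm{\grad p}_{L^2}^2 - \Norm{S}_{L^2}\Norm{p}_{L^2}$, and coercivity on $H_0^1(\Omega)$ follows from the Poincar\'e inequality. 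Weak lower semicontinuity of the convex functional plus coercivity yields existence of a minimizer via the direct method, and strict (indeed uniform) convexity gives uniqueness.

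For (ii), I would compute the first variation: for $q \in H_0^1(\Omega)$,
\[
   \left.\tot{}{s}\calF_\alpha[p_0 + sq]\right|_{s=0}
   = \int_\Omega \grad p_0\cdot\grad q + \chi_\A\,\chi_{\{|\grad p_0|>\alpha\}}\,|\grad p_0|^{\frac{2}{\gamma-1}}\,\grad p_0\cdot\grad q \,\d x - \int_\Omega Sq\,\d x = 0,
\]
where the derivative of the negative-part penalty term produces the characteristic function $\chi_{\{|\grad p_0| > \alpha\}}$ (the boundary set $\{|\grad p_0| = \alpha\}$ has the right one-sided derivative so contributes nothing). Since $m_0\otimes m_0 = \chi_\A\chi_{\{|\grad p_0|>\alpha\}}|\grad p_0|^{\frac{2(2-\gamma)}{\gamma-1}+2}\,\frac{\grad p_0 \otimes \grad p_0}{|\grad p_0|^2} = \chi_\A\chi_{\{|\grad p_0|>\alpha\}}|\grad p_0|^{\frac{2}{\gamma-1}}\,\frac{\grad p_0\otimes\grad p_0}{|\grad p_0|^2}$ (one has to track the exponent arithmetic: $\frac{2(2-\gamma)}{\gamma-1} + 2 = \frac{2}{\gamma-1}$), we recover $(m_0\otimes m_0)\grad p_0 = \chi_\A\chi_{\{|\grad p_0|>\alpha\}}|\grad p_0|^{\frac{2}{\gamma-1}}\grad p_0$, so the weak formulation above is exactly the weak form of the claimed Poisson equation.

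For (iii), with $D=0$ the stationary version of \eqref{eq2} is the pointwise algebraic relation $c^2(m\cdot\grad p)\grad p = |m|^{2(\gamma-1)}m$ (here $c^2=1$). On $\{m_0 = 0\}$ this holds trivially. On $\{m_0 \neq 0\}$, i.e.\ on $\A \cap \{|\grad p_0| > \alpha\}$, we have $m_0 = |\grad p_0|^{\frac{2-\gamma}{\gamma-1}}\grad p_0$, so $m_0$ is parallel to $\grad p_0$; writing $m_0 = \beta\grad p_0$ with $\beta = |\grad p_0|^{\frac{2-\gamma}{\gamma-1}} > 0$, I substitute into both sides and reduce to the scalar identity $|m_0\cdot\grad p_0| = \beta|\grad p_0|^2 = |\grad p_0|^{\frac{2-\gamma}{\gamma-1}+2} = |\grad p_0|^{\frac{\gamma}{\gamma-1}}$ against $|m_0|^{2(\gamma-1)} = \beta^{2(\gamma-1)}|\grad p_0|^{2(\gamma-1)} = |\grad p_0|^{\frac{(2-\gamma)\cdot 2(\gamma-1)}{\gamma-1}+2(\gamma-1)} = |\grad p_0|^{2(2-\gamma)+2(\gamma-1)} = |\grad p_0|^2$, and indeed $|\grad p_0|^{\frac{\gamma}{\gamma-1}} = |\grad p_0|^{\frac{\gamma}{\gamma-1}}$ matches once both sides are divided through — the algebraic relation holds identically because this is exactly the characterization \eqref{F-z-eq}--\eqref{m-F-eq} with $z = |\grad p_0|^{\frac{2-\gamma}{\gamma-1}}|\grad p_0| = |\grad p_0|^{\frac{1}{\gamma-1}}$ on the decreasing branch. (One should double-check consistency of $\grad p$ here with \eqref{grad_p-F}, but this is again bookkeeping, since $\grad p_0$ appearing in the Poisson equation is by construction the physical pressure gradient.)

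The main obstacle is item (i), and specifically handling the non-smoothness of $\calF_\alpha$ at the interface $\{|\grad p_0| = \alpha\}$: $F$ is only $C^{1,1}$, not $C^2$, so the uniform convexity must be argued from the monotonicity of $\xi \mapsto \nabla F(\xi)$ (equivalently, from $F' \geq 0$ and $F'' \geq \mu > 0$ a.e.) rather than from a classical positive-definite Hessian everywhere, and the first-variation computation in (ii) needs the Lipschitz continuity of $\nabla F$ to differentiate under the integral sign. Once uniform monotonicity of $\nabla F$ is in hand, existence, uniqueness, and the weak Euler--Lagrange equation all follow from standard convex-analysis arguments as in \cite{Evans}, and parts (ii)--(iii) are purely algebraic verifications.
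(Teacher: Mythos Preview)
Your proposal is correct and follows essentially the same approach as the paper. In fact, the paper does not give a separate proof of this lemma: the convexity computation of $F$ (leading to the threshold $\alpha_\gamma$) is carried out in the text immediately preceding the lemma statement, and the remaining steps --- coercivity from the quadratic part, existence and uniqueness of the minimizer via the direct method, identification of the Euler--Lagrange equation with the nonlinear Poisson equation, and the pointwise algebraic verification that $(m_0,p_0)$ satisfies the stationary version of \eqref{eq2} --- are left implicit as standard, which is precisely what you have spelled out in (i)--(iii).
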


Unfortunately, in the spatially one-dimensional case we are able to show that the above construction delivers
the nonlinearly \emph{unstable} steady states (as long as $m_0\not\equiv 0$), so these solutions will never be observed
in the long time limit of the system \eqref{eq1}--\eqref{eq2}.
For simplicity, we set $\Omega:=(0,1)$ and $\A=\emptyset$.
Let us recall that the 1d problem with $D=0$ reduces to the ODE family
\( \label{m1d}
    \partial_t m = \left( \frac{B(x)^2}{(1+m^2)^2} - |m|^{2(\gamma-1)} \right) m.
\)
As calculated in Section \ref{sec:stability1D}, the nonlinearly \emph{stable}
stationary solution of \eqref{m1d} for $1/2 < \gamma <1$ is
\[
    m(x) &=& 0,\qquad\mbox{if } |B(x)|\leq Z_\gamma,\\
    m(x) &\in& \{0, \pm m_s(x)\},\qquad\mbox{if } |B(x)|> Z_\gamma,
\]
where $Z_\gamma$ is given by \eqref{Z_gamma} and $m_s>0$ is the \emph{largest} solution of
\[
    \frac{B(x)^2}{(1+m^2)^2} = |m|^{2(\gamma-1)},
\]
with $B(x)=\int_0^x S(y) \d y >0$.
Note that for $|B(x)|>Z_\gamma$ the above algebraic equation has
four solutions $\pm m_u$, $\pm m_s$ with $0 < m_u < m_s$,
and $m_u$ is the unstable, $m_s$ stable solution for \eqref{m1d}.
Moreover, note that $$Z_\gamma = \frac{2}{1+\gamma}\alpha_\gamma,$$
so that $Z_\gamma > \alpha_\gamma$.

Now, let $(m_0, p_0)$ be the solution constructed in Lemma \ref{lem:unst_contr},
i.e., $p_0$ is the unique minimizer of \eqref{fF} and $m_0$ is given by \eqref{m0_cutoff}.
Clearly, to have a \emph{nonzero stable} state $m_0=m_0(x)$ for some $x\in\Omega$, it is necessary that
\[
   |B(x)|>Z_\gamma \qquad\mbox{and}\qquad |\partial_x p_0(x)| > \alpha > \alpha_\gamma.
\]
Moreover, if $m_0(x)\neq 0$, we have the formulas
\[
   |m_0| = |\partial_x p_0|^\frac{1}{\gamma-1}  \qquad\mbox{and}\qquad
      |B(x)|=|m_0|^{\gamma-1}(1+|m_0|^2) = |\partial_x p_0|\left( 1 + |\partial_x p_0|^\frac{2}{\gamma-1}\right).
\]
Thus, defining $f_\gamma(u) := u\left(1+u^\frac{2}{\gamma-1}\right)$ for $u>0$,
we have $|B(x)| = f_\gamma(|\partial_x p_0|)$.
The function $f_\gamma(u)$ has a unique strict minimum on $\R^+$ at $u=\alpha_\gamma>0$
and $f_\gamma(\alpha_\gamma) = Z_\gamma$. Since $\lim_{u\to 0} f_\gamma(u) = \lim_{u\to+\infty} f_\gamma(u) = +\infty$,
for each $|B(x)|>Z_\gamma$ there exist $0 < u_1 < \alpha_\gamma < u_2$
such that $f_\gamma(u_1) = f_\gamma(u_2) = |B(x)|$.
Clearly, $u_1$, $u_2$ correspond to the nonzero steady states $m_u$, $m_s$ of \eqref{m1d}, and since
$|m_0| = |\partial_x p_0|^\frac{1}{\gamma-1}$ is a decreasing function of $u=|\partial_x p_0|$
and $m_u < m_s$, the \emph{unstable} steady state $m_u$ corresponds to $u_2$,
while the \emph{stable} steady state $m_s$ corresponds to $u_1$.
Since, by construction, we have to choose $u=|\partial_x p_0|>\alpha_\gamma$ in order to have $m_0(x)\neq 0$,
we are in fact choosing $u_2$ and thus the \emph{unstable} steady state $m_u$.

\section{Numerical Method and Examples}\label{sec:Num}

The model has the ability to generate fascinating patterns and we illustrate this with numerical experiments performed in two space dimensions using a Galerkin framework.
These interesting patterns show up if the diffusivity in the system is low, i.e. $D\ll 1$, and the pressure gradient is large.
In this context let us also mention  \cite{MMM,Hu}, where numerical simulations for Eqs.~\eqref{eq1}--\eqref{eq2} have been presented.

Furthermore, we want to demonstrate that the results of the analysis in one dimension are also relevant for the two dimensional setting. 
For instance, for $\gamma<1/2$ we are interested in extinction in finite time of the solution, cf. Section~\ref{sec:finite_time_breakdown}, and for $1/2\leq \gamma<1$ we demonstrate instability of solutions constructed in Section~\ref{sec:instable_gamma_less_1}.

\subsection{Mixed variational formulation}\label{sec:mixed}
Since we are interested in the case $D\ll 1$, it turns out to be useful to reformulate Eq.~\eqref{eq2} as a mixed problem.
Consequently, setting $\sigma=\nabla m$, we consider
\begin{align*}
-\div [( rI + m \otimes m) \nabla p ] &= S\quad\text{in }\Omega \times (0,T),\\
              p &=0\quad\text{ on }\Gamma \times (0,T),\\
       \nu\cdot ( rI + m \otimes m) \nabla p &=0\quad\text{ on }\partial\Omega\setminus\Gamma\times (0,T),\\
\partial_t m - D^2 \div \sigma &= c^2 (\nabla p \otimes \nabla p) m - |m|^{2(\gamma-1)} m\quad\text{in } \Omega\times (0,T),\\
 \sigma-\nabla  m &= 0 \quad\text{in } \Omega\times (0,T),\\
  m &=0\quad\text{on }\partial\Omega\times (0,T),
\end{align*}
with $m(t=0)=m^0$ in $\Omega$.
Here, $\Gamma\subset\partial\Omega$ denotes the Dirichlet part of the boundary, and we denote by $H^1_{0,\Gamma}(\Omega)=\{p\in H^1(\Omega): p_{\mid\Gamma}=0\}$ the space of Sobolev functions vanishing on $\Gamma$. Additionally, we need the space $H(\divergence)=\{\mu \in L^2(\Omega)^2:\ \div \mu\in L^2(\Omega)\}$.
% Furthermore, $r$ is the scalar permeability of the porous medium, see \cite{MMM}.
As a starting point for our Galerkin framework we consider the following weak formulation: Find $(p,m,\sigma)\in L^\infty(0,T;H^1_{0,\Gamma}(\Omega))\times L^2(0,T;L^2(\Omega)^2)\times L^2(0,T;H(\divergence)^2)$ such that 
\begin{align*}
 \int_\Omega ( rI + m \otimes m) \nabla p \cdot \nabla q \d x &= \int_\Omega S q \d x,\\
%  \int_\Omega \partial_t m v \d x - \int_\Omega D^2 \divergence(\sigma) v \d x &= \int_\Omega c^2 (\nabla p \otimes \nabla p) m \cdot v - |m|^{2(\gamma-1)} m\cdot v \d x\\
 \int_\Omega \partial_t m\cdot v \d x - \int_\Omega D^2 \div\sigma\cdot v \d x &= \int_\Omega f_{\gamma,c}(m,\nabla p)\cdot v \d x,\\
 \int_\Omega \sigma\cdot \mu \d x +\int_\Omega m \cdot\div \mu \d x&= 0,
\end{align*}
for all $(q,v,\mu)\in H^1_{0,\Gamma}(\Omega)\times L^2(\Omega)^2\times H(\divergence)^2$, and $m(t=0)=m^0$ in $\Omega$. Here, we use the abbreviation
$$
 f_{\gamma,c}(m,\nabla p)= c^2 (\nabla p \otimes \nabla p) m - |m|_\rho^{2(\gamma-1)} m,
$$
where $|m|_\rho=\sqrt{m_{1}^2 + m_2^2 +\rho}$ is a regularized absolute value with regularization parameter $\rho\geq 0$.
Any strong solution $(m,p)$ to \eqref{eq1}--\eqref{eq2} satisfying the above boundary conditions yields a solution to the flux based weak formulation in case $\rho=0$ and $\gamma>1/2$.
Homogeneous Neumann boundary conditions for $m$ result in homogeneous Dirichlet boundary conditions for $\sigma\cdot \nu$ on $\partial \Omega$, and the function space for $\sigma$ has to be adapted accordingly.
% Note that according to Lemma~\ref{lem:m-aux}, we have $\Delta m \in L^2((0,T)\times\Omega)$, and consequently $\sigma\in L^2((0,T);H(\divergence))$.

\subsection{Space discretization}
To obtain a space discretization, we let $\{\T_h\}$ be a family of regular quasi-uniform triangulations of $\Omega$ with $h=\max_{T\in\T_h} h_T$ and $h_T=\sqrt{|T|}$ for all $T\in\T_h$.
For the approximation of the pressure $p$ we choose standard Lagrangian finite elements, i.e. continuous, piecewise linear functions
$$
 P_h =\{ v_h \in C^0(\overline{\Omega}): v_{h\mid T}\in \Pol_1(T)\ \forall T\in \T_h, v_{h\mid \Gamma}=0 \} \subset H^1_{0,\Gamma}(\Omega).
$$
For the approximation of the conductance vector $m$ we choose piecewise constant functions
$$
 M_h =\{ v_h \in L^2(\Omega): v_{h\mid T}\in \Pol_0(T)\ \forall T\in \T_h\},
$$
and for the approximation of $\sigma$ we choose lowest order Raviart-Thomas elements
$$
 V_h =\{ v_h\in H(\divergence) : v_{h\mid T}\in RT_0(T)=\Pol_0(T)+x \Pol_0(T)\ \forall T\in \T_h\}.
$$
The resulting Galerkin approximation is then as follows.
Find $(p_h,m_h,\sigma_h)\in L^\infty(0,T;P_h)\times L^2(0,T;M_h^2)\times L^2(0,T;V_h^2)$ such that for a.e. $t\in (0,T]$
\begin{align*}
 \int_\Omega ( rI + m_h(t) \otimes m_h(t)) \nabla p_h(t) \cdot \nabla q_h \d x &= \int_\Omega S q_h \d x,\\
%  \int_\Omega \partial_t m_h(t) v_h \d x - \int_\Omega D^2 \divergence(\sigma_h(t)) v_h \d x &= \int_\Omega c^2 (\nabla p_h(t) \otimes \nabla p_h(t)) m_h(t) \cdot v_h - |m_h(t)|_\eps^{2(\gamma-1)} m_h(t)\cdot v_h \d x\\
\int_\Omega \partial_t m_h(t)\cdot v_h \d x - \int_\Omega D^2 \div\sigma_h(t) \cdot v_h \d x &= \int_\Omega f_{\gamma,c}(m_h(t),\nabla p_h(t)) \cdot v_h \d x,\\
 \int_\Omega \sigma_h(t)\cdot \mu_h \d x+\int_\Omega m_h(t)\cdot \div \mu_h \d x&= 0,
\end{align*}
for all $(q_h,v_h,\mu_h)\in P_h\times M_h^2\times V_h^2$, and $m_h(t=0)=m_h^0$ in $\Omega$. Here, $m_h^0$ denotes the $L^2$-projection of $m^0$ onto $M_h$.
Assuming sufficient regularity of solutions, the method applied to the stationary problem
is of first order in the $L^2(\Omega)$-norm and also first order in the $L^2(\Omega)$-norm for $\grad p$ and $\sigma$.
The $L^2(\Omega)$-projection of $m$ is approximated with second order in $L^2(\Omega)$ if $f_{\gamma,c}(m,\nabla p)\in H^1(\Omega)$.
Our analytical results do not provide such regularity; however,
even for regular solutions the error estimates are in practice not very helpful since the constants depend on norms of derivatives
of solutions which are locally very large in the small diffusion - large activation regime.
For details on the approximation spaces, mixed finite elements and corresponding error estimates see for instance \cite{Boffi}.

\subsection{Time discretization}
For the discretization of the time variable let $0=t^0<t^1<\ldots<t^K=T$ denote a partition of $[0,T]$. By $m_h^k\approx m_h(t^k)$, $p_h^k\approx p_h(t^k)$ and $\sigma_h^k \approx \sigma_h(t^k)$, $0\leq k\leq K$, we denote the corresponding approximation in time, which is obtained by solving the following implicit-explicit (IMEX)
first-order Euler scheme
\begin{align}
 \int_\Omega (r I + m_h^k \otimes m_h^k) \nabla p_h^k \cdot \nabla q_h \d x &= \int_\Omega S q_h \d x, \label{eq:IMEX1}\\
%  \int_\Omega m_h^{k+1} v_h \d x - (t_{k+1}-t_k)\int_\Omega D^2 \divergence(\sigma_h^{k+1}) v_h \d x &=  \int_\Omega m_h^{k} v_h \d x +(t_{k+1}-t_k)\big(\int_\Omega c^2 (\nabla p_h^k \otimes \nabla p_h^k) m_h^k \cdot v_h - |m_h^k)|^{2(\gamma-1)} m_h^k\cdot v_h \d x\big)\\
 \int_\Omega m_h^{k+1} \cdot v_h  - \delta^{k+1} D^2 \div \sigma_h^{k+1}\cdot v_h \d x &=  \int_\Omega \big(m_h^{k}+\delta^{k+1} f_{\gamma,c}(m^k_h,\nabla p_h^k)\big)\cdot v_h \d x,\label{eq:IMEX2}\\
 \int_\Omega \sigma_h^{k+1}\cdot \mu_h\d x +\int_\Omega m_h^{k+1}\cdot \div \mu_h \d x&= 0,\label{eq:IMEX3}
\end{align}
for all $(q_h,v_h,\mu_h)\in P_h\times M_h^2\times V_h^2$, and $\delta^{k+1}=t^{k+1}-t^k$.
% This IMEX scheme is fully discrete and it can be integrated exactly without introducing variational crimes.
We note that for $D=0$ Eq.~\eqref{eq2} is an ODE, and \eqref{eq:IMEX2} amounts to an explicit Euler scheme for approximating $m_h(t)$ on each triangle $T\in\T_h$.
In addition, there is no coupling between the different triangles in this case, and consequently no numerical diffusion is introduced into the system.

The discrete counterpart of the energy defined in \eqref{energy} is defined as follows
$$
 \E_h(m^k_h)=\frac{1}{2} \int_{\Omega} D^2 |\sigma_h^k|^2 + \frac{|m_h^k|_\rho^{2\gamma}}{\gamma} + c^2 |m^k_h\cdot\nabla p^k_h|^2 + c^2|\nabla p_k^h|^2 \d x.
$$
Our main guideline for obtaining a stable scheme is to ensure that $\E_h(m^{k+1}_h)\leq \E_h(m^{k}_h)$ for all $k\geq 0$, which is inspired by but weaker than \eqref{energy_ineq}.
We choose an adaptive time-stepping according to the following rule: $\delta^1=1/(2 c^2 \| |\nabla p^0_h|\|_\infty^2)$, $t_1=t_0+\delta^1$, $\delta^2=\delta^1$. Let $\delta^k$ be given.
If $\delta^k\in (1/(20 c^2 \| |\nabla p^k_h|\|_\infty^2),9/(10c^2 \| |\nabla p^k_h|\|_\infty^2))$ then $\delta^{k+1}=\delta^k$, otherwise set $\delta^{k+1}=1/(2 c^2 \| |\nabla p^k_h|\|_\infty^2)$, and $t_{k+1}=t^k+\delta^{k+1}$. %Moreover, we let $\delta^k\leq 10^{-5}$.
Moreover, we let $\delta^k$ be sufficiently small.
This choice of time-step is motivated by the solution of the ODE system 
\begin{align*}
 \tilde m_t = \begin{pmatrix} 0 & 0 \\0 & c^2|\nabla p|^2\end{pmatrix} \tilde m,\qquad \tilde m(0)=\tilde m_0,
\end{align*}
which is obtained from Eq.~\eqref{eq2} with $D=0$ and no relaxation term through diagonalization. Assuming $\nabla p(t)$ does not depend on $t$, the solution is given by
\begin{align*}
 \tilde m_1(t) =\tilde m_{0,1},\quad \tilde m_2(t) = \exp(c^2|\nabla p|^2 t) \tilde m_{0,2}.
\end{align*}
Since $c^2\nabla p_h^k\otimes\nabla p_h^k$ is positive semi-definite the explicit Euler method is unstable for all choices of $\delta^k$. Stability might however be retained through the relaxation term as soon as $c^2\nabla p_h^k\otimes\nabla p_h^k-|m_h^k|^{2(\gamma-1)}I$ is negative definite, i.e. if $c^2|\nabla p_h^k|^2< |m_h^k|^{2(\gamma-1)}$; cf. Section~\ref{sec:stable_stationary}.
Besides this stability issue there is an additional linearization error by treating $\nabla p_h^k$ explicitly. Hence, if $\nabla p_h^k$ is changing rapidly, then $\delta^k$ should be sufficiently small to obtain a reasonable accuracy.
A detailed investigation of stable and accurate time-stepping schemes is however out of the scope of this paper and is left for further research; let us mention \cite{Koto,Ruuth} for IMEX schemes in the context of reaction-diffusion equations.

% Hence, in order to treat the activation part explicitly, there should be some time step restriction depending (at least) on $c^2|\nabla p|^2$.
%

\subsection{Setup}\label{sec:setup}
\begin{figure} \centering
 \includegraphics[width=.24\textwidth]{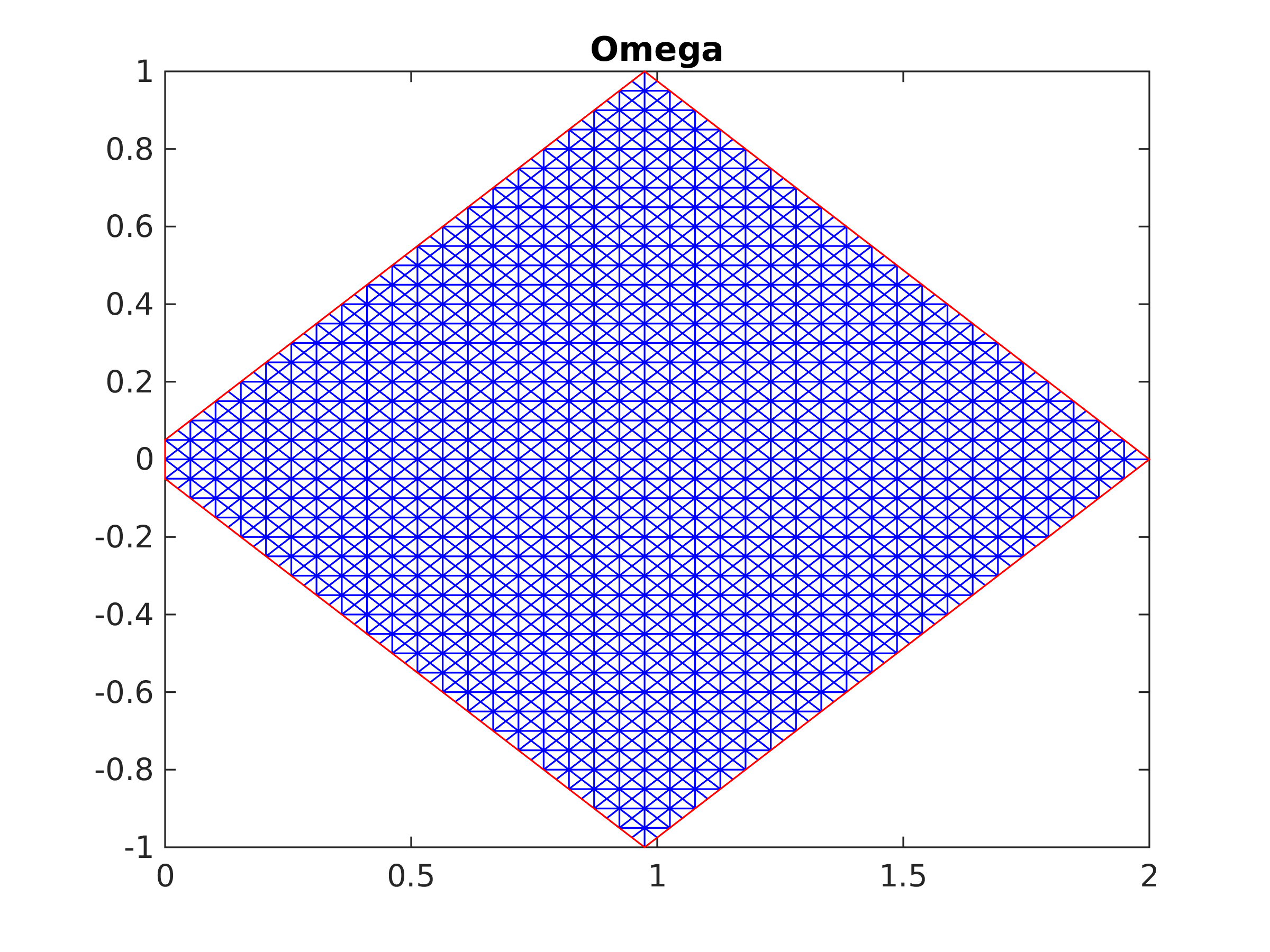}
 \includegraphics[width=.24\textwidth]{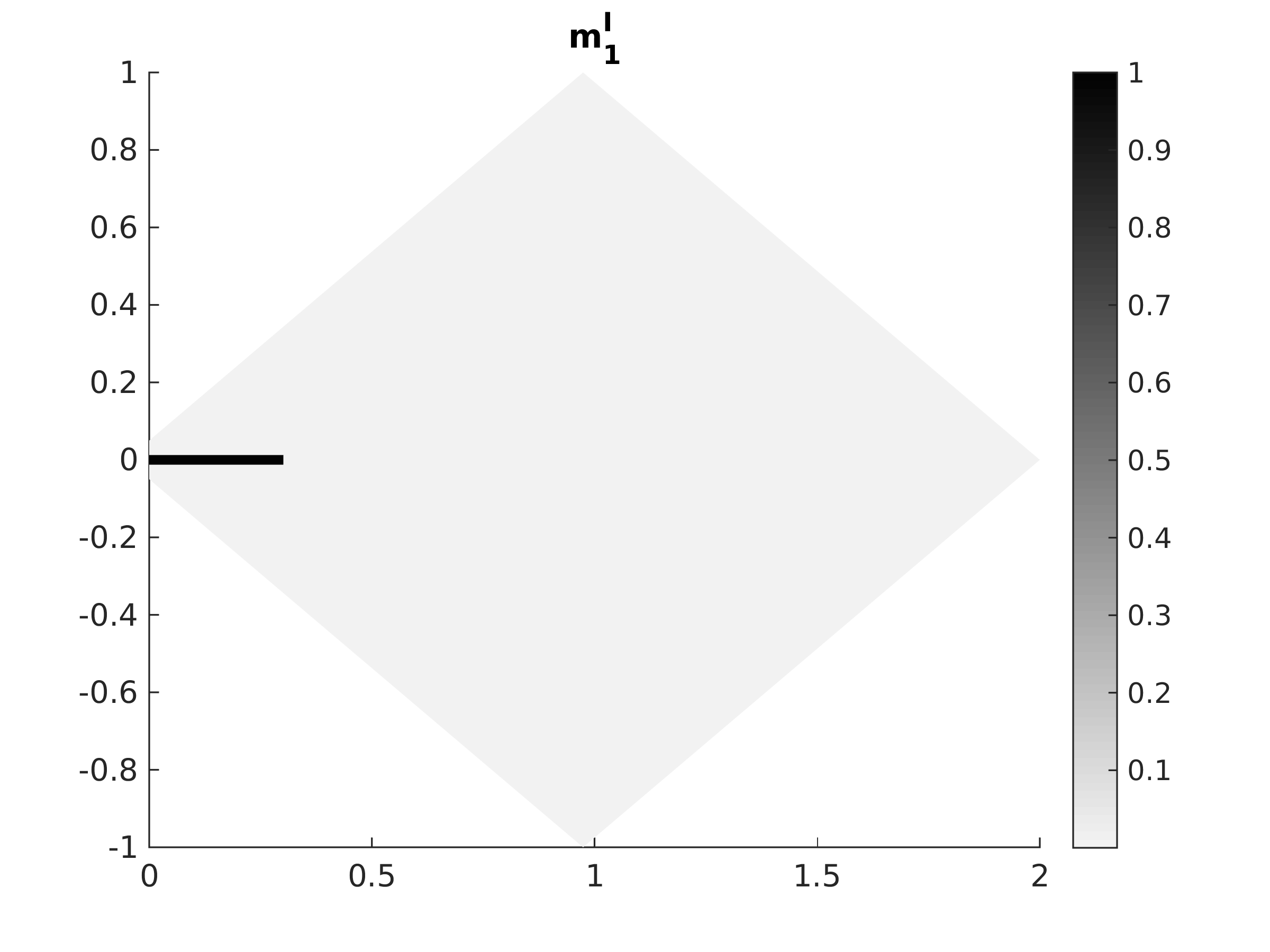}
 \includegraphics[width=.24\textwidth]{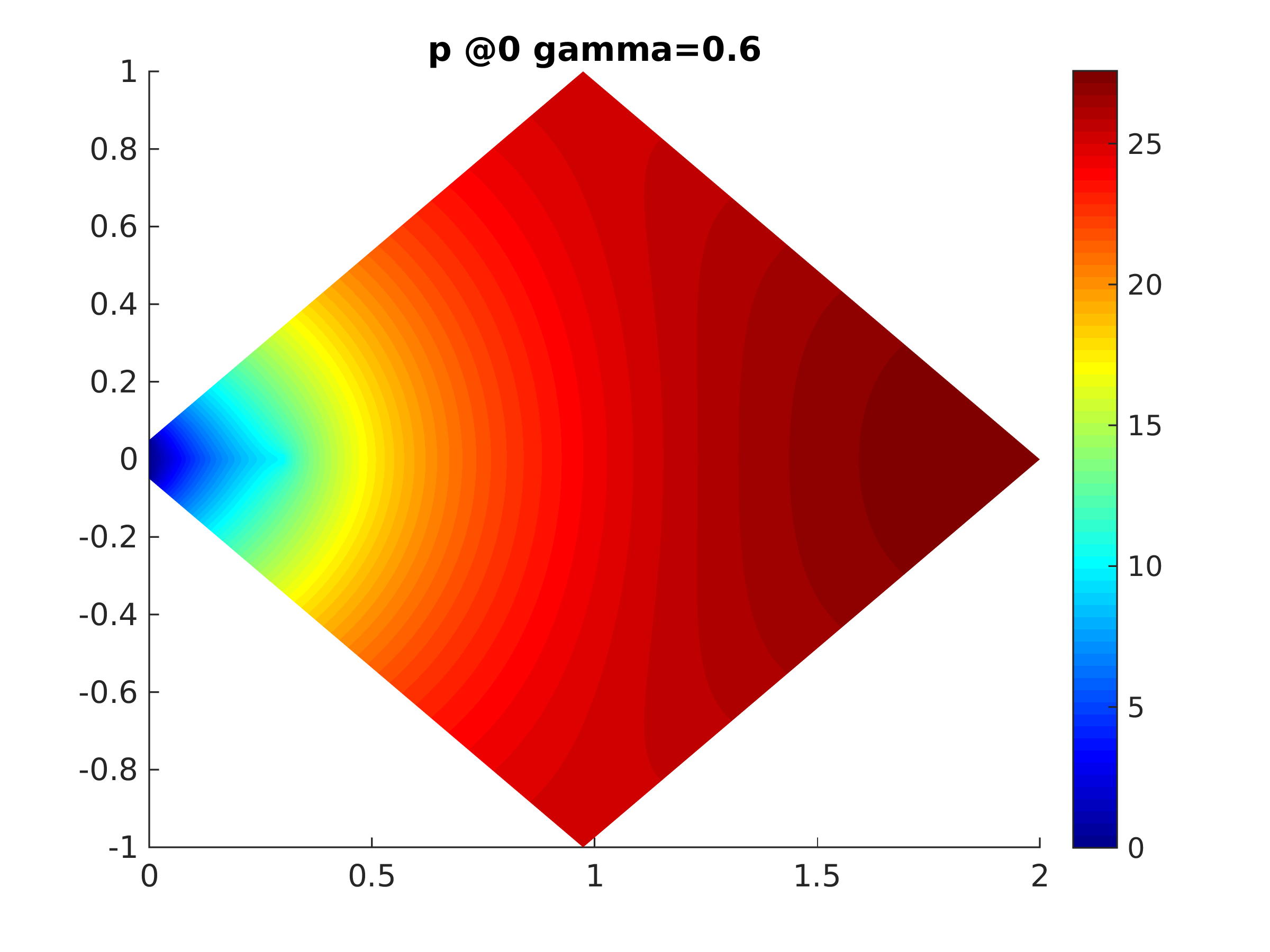}
  \includegraphics[width=.24\textwidth]{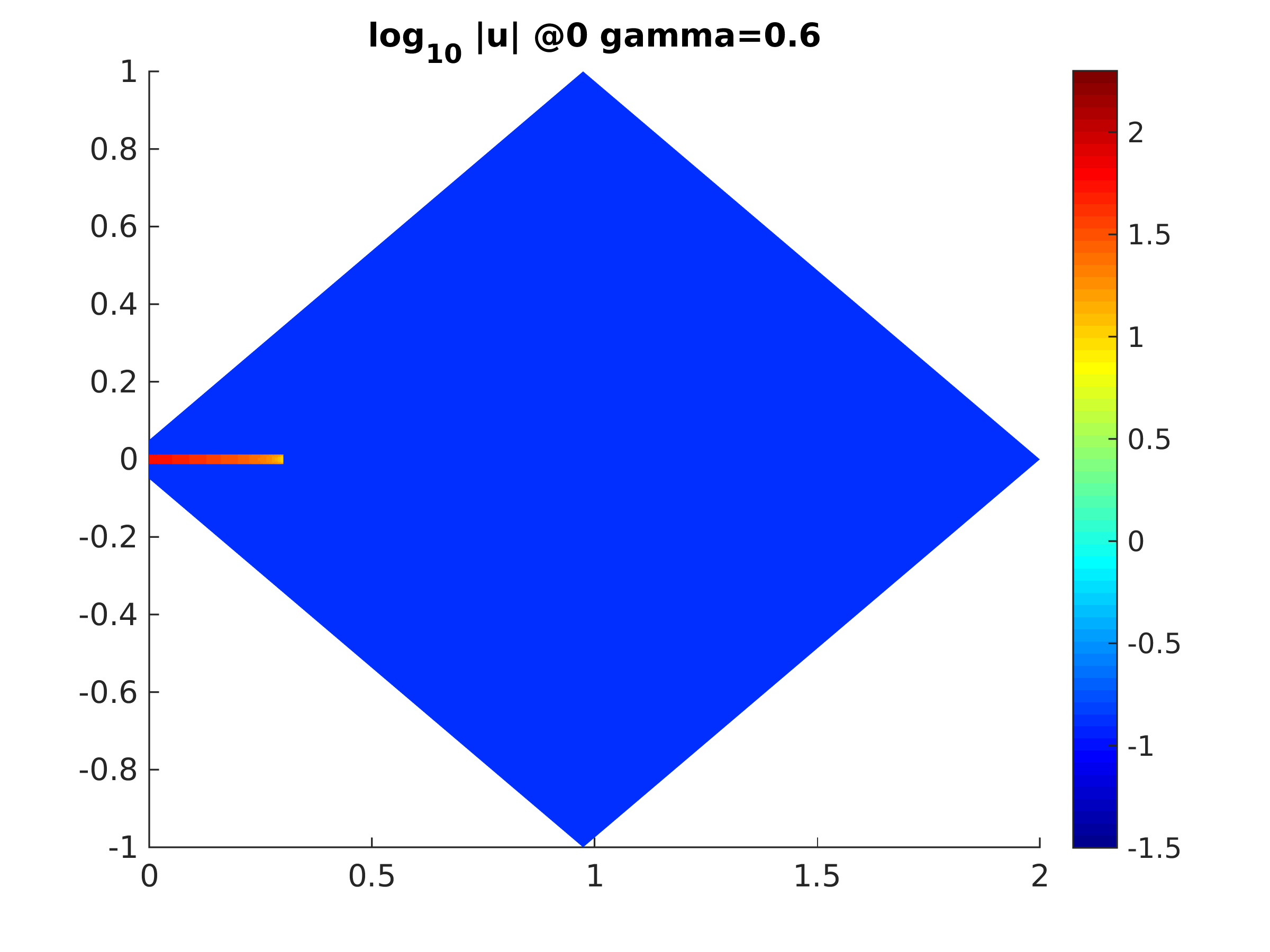}
%  \caption{Top row: Triangulation $\mathcal{T}_h$ of $\Omega$ with $1,678$ vertices and $3,196$ triangles (left). Initial datum $m_1^0$ while $m_2^0=0$ (right).
%  Bottom row: Initial pressure $p^0$ (right), and absolute value of the initial velocity $|u^0|$ (left). Note that $p^0$ and $|u^0|$ is independent of $\gamma$.
%  \label{fig:domain}}
  \caption{From left to right: Triangulation $\mathcal{T}_h$ of $\Omega$ with $1,678$ vertices and $3,196$ triangles; initial datum $m_1^0$, while $m_2^0=0$; initial pressure $p^0_h$; decadic logarithm of the absolute value of the initial velocity $|u^0_h|$. Note that $p^0_h$ and $|u^0_h|$ are the same for all values of $\gamma$.
 \label{fig:domain}}
\end{figure}

As a computational domain we consider a diamond shaped two-dimensional domain with one edge cut, see Figure~\ref{fig:domain}. 
We use a refined triangulation with $102,905$ vertices and $204,544$ triangles, which corresponds to $h\approx 0.0032$. 
% Homogeneous Dirichlet boundary conditions for $p$ are prescribed on $\Gamma=\{x=0\}\cap\partial\Omega$, on the remaining parts of $\partial\Omega$ we prescribe homogeneous Neumann conditions for $p$. 
The Dirichlet boundary is defined as $\Gamma=\{x\in\RR^2: x_1=0\}\cap\partial\Omega$.
% For $m$ we prescribe homogeneous Dirichlet boundary conditions.
If not stated otherwise, we let
\begin{align*}
 S=1,\quad r=\frac{1}{10},\quad c=50,\quad D=\frac{1}{1000},\quad \gamma=\frac{1}{2},\quad \rho=10^{-12},
\end{align*}
where $\rho$ is the regularization parameter defined in Section~\ref{sec:mixed}, and define the initial datum as
\begin{align*}
 m^0_1(x)=\begin{cases} 1, & x\leq 0.3 \text{ and } |y| \leq 0.0125,\\ 0, &\text{ else,} \end{cases}\qquad m^0_2=0.
\end{align*}
The main quantity of interest is the discrete velocity defined as
\begin{align*}
 u_h^k = (rI + m_h^k\otimes m_h^k)\nabla p_h^k,
\end{align*}
see also Section~\ref{S3}.
The initial velocity $u_h^0$ and the initial pressure $p_h^0$ do not depend on $\gamma$ or $D$, and they are depicted in Figure~\ref{fig:domain}.
Since the numerical simulation is computationally expensive, we could not compute a stationary state in many examples below. However, in order to indicate that the presented solutions are near a stationary state, we define the stationarity measures
\begin{align*}
 \E_{h,t}^{k} = \frac{\E_h(m^k_h)-\E_h(m^{k-1}_h)}{\delta^k}\quad\text{and}\quad m_{h,t}^k=\frac{\|m^k_h-m^{k-1}_h\|_{L^2(\Omega)}}{\delta^k}.
\end{align*}
Furthermore, we define the quantity
\begin{align*}
 s_k = \frac{\|u_h^k\|_{L^2(\Omega)}}{\|u_h^k\|_{L^1(\Omega)}},
\end{align*}
which measures the sparsity of the network.
In order to demonstrate the dependence of the solution on the different parameters in the system we first present some simulations for varying parameter values.

\subsection{Varying $D$}\label{sec:varying_D}
The proliferation of the network and its structure is crucially influenced by diffusion. 
In the limit of vanishing diffusion $D=0$ the support of the conductance vector cannot grow.
If diffusion is too large, interesting patterns will not show up in the stationary network.
In the following we investigate the influence of different values for $D\in \{\frac{1}{2},\frac{1}{10},\frac{1}{100},\frac{1}{1000}\}$ on the network formation,
while keeping $\gamma=1/2$ fixed.
% Here, we set $\gamma=1/2$.
% In particular, the case $D=1/1000$ complements the examples of the previous section, cf. Figure~\ref{fig:evolution_D0001}.
% For $D\geq 1/10$, initial network like structures are smoothed such that they vanish for larger times, see Figure~\ref{fig:evolution_D05} or Figure~\ref{fig:evolution_D01}.
For $D\geq 1/10$, the obtained velocities are dominated by diffusion and no fine scale structures appear in the network, see Figures~\ref{fig:evolution_D05} and~\ref{fig:evolution_D01}.
For $D=1/100$ the resulting velocity is still diffusive, but some large scale structure is visible, see Figure~\ref{fig:evolution_D001}.
Decreasing the diffusion coefficient even further to $D=1/1000$, the network builds fine scale structures, see Figure~\ref{fig:evolution_D0001}.
We note that the velocity is not near a stationary state here.
For this very small $D=1/1000$, we have to be careful in interpreting the results. Our simulations have shown a strong mesh dependence for this case,
which is not apparent for $D\geq 1/2$.
We are not able to fully explain this behavior, but the comparison on different meshes for large diffusion and moderate values of $c$, which makes in turn $c\nabla p$ moderate, suggest that the mesh is too coarse to be able to resolve the diffusion process properly in the presence of strong activation. Here, one should use finer meshes to resolve this issue, which however also leads to prohibitively long computation times. %with the scheme described above.
A modification of the existing scheme to cope with this issue is left to further research. 
% Moreover, let us recall that the energy $\E$ is highly non-convex which allows for several stationary states. 

Nonetheless, we believe that the velocities presented here are qualitatively correct, as they structurally show the right behavior, i.e. the smaller the diffusion $D$ the finer the scales in the network are, and the higher the sparsity index $s_k$ is, see also Table~\ref{tab:sparsity_D}.
Let us remark that the closer $\gamma$ is to $1/2$, the sparser the structures should be in a stationary state, which complies with the well-known fact that $L^1$-norm minimization promotes sparse solutions (note that the metabolic energy term in \eqref{energy} becomes a multiple
of $\Norm{m}_{L^1(\Omega)}$ for $\gamma=1/2$).
Furthermore, even though the results in Figure~\ref{fig:evolution_D0001} are quantitatively very different, they possess qualitatively the same properties; namely the thickness of the primary, secondary and tertiary branches. Let us again emphasize that the results of Figure~\ref{fig:evolution_D0001} are far from being stationary, and the networks are likely to change their structure when further evolving.
% This will become even more clear in the next section, where we compare the results for different values of $\gamma$.

% The corresponding evolution of $c\||\nabla p|\|_{L^\infty(\Omega)}$ shows that the quantity $c \|\nabla p_h^k\|_{L^\infty(\Omega)}$, though not monotonically, decreases, which might indicate that a stable solution is approached; cf. Section~\ref{sec:stable_stationary} for a motivation in the case $\gamma=1$. 
% Moreover, this might serve as an indicator that the network has built its most prominent features, and consequently the restriction on the time-step size might be lowered.

% \begin{figure} \centering
%  \includegraphics[width=.32\textwidth]{u_D05_3000}
%  \includegraphics[width=.32\textwidth]{u_D05_6000}
%  \includegraphics[width=.32\textwidth]{u_D05_12000}\\
%  \includegraphics[width=.32\textwidth]{u_D05_18000}
%  \includegraphics[width=.32\textwidth]{u_D05_417000}
%  \includegraphics[width=.32\textwidth]{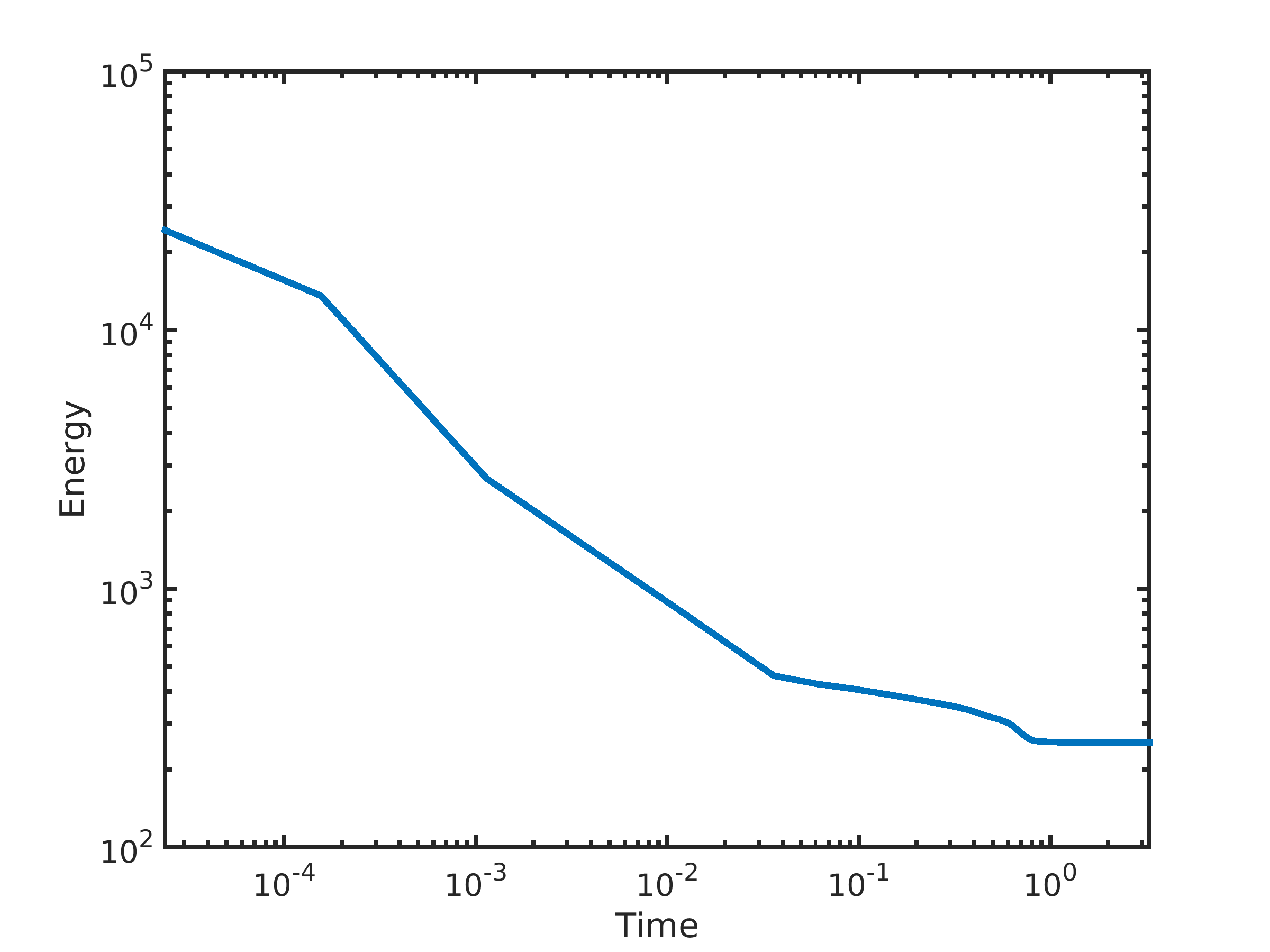}
%  \caption{Evolution of the velocity $|u_h^k|$ for $\gamma=1/2$ and $D=0.5$ in a Log$_{10}$-scale for different times, and corresponding evolution of $\mathcal{E}_h(m_h^k)$.
%  The stationary state is reached for $t=3.2847$. 
% %  The bottom right picture shows convergence of $\mathcal{E}_h(t)_h-\mathcal{E}_h(t=3.2847)$ in a logarithmic scaling. Here $\mathcal{E}_h(t=3.2847)=254.63$. 
%   \label{fig:evolution_D05}}
% \end{figure}
\begin{figure} \centering
 \includegraphics[width=.32\textwidth]{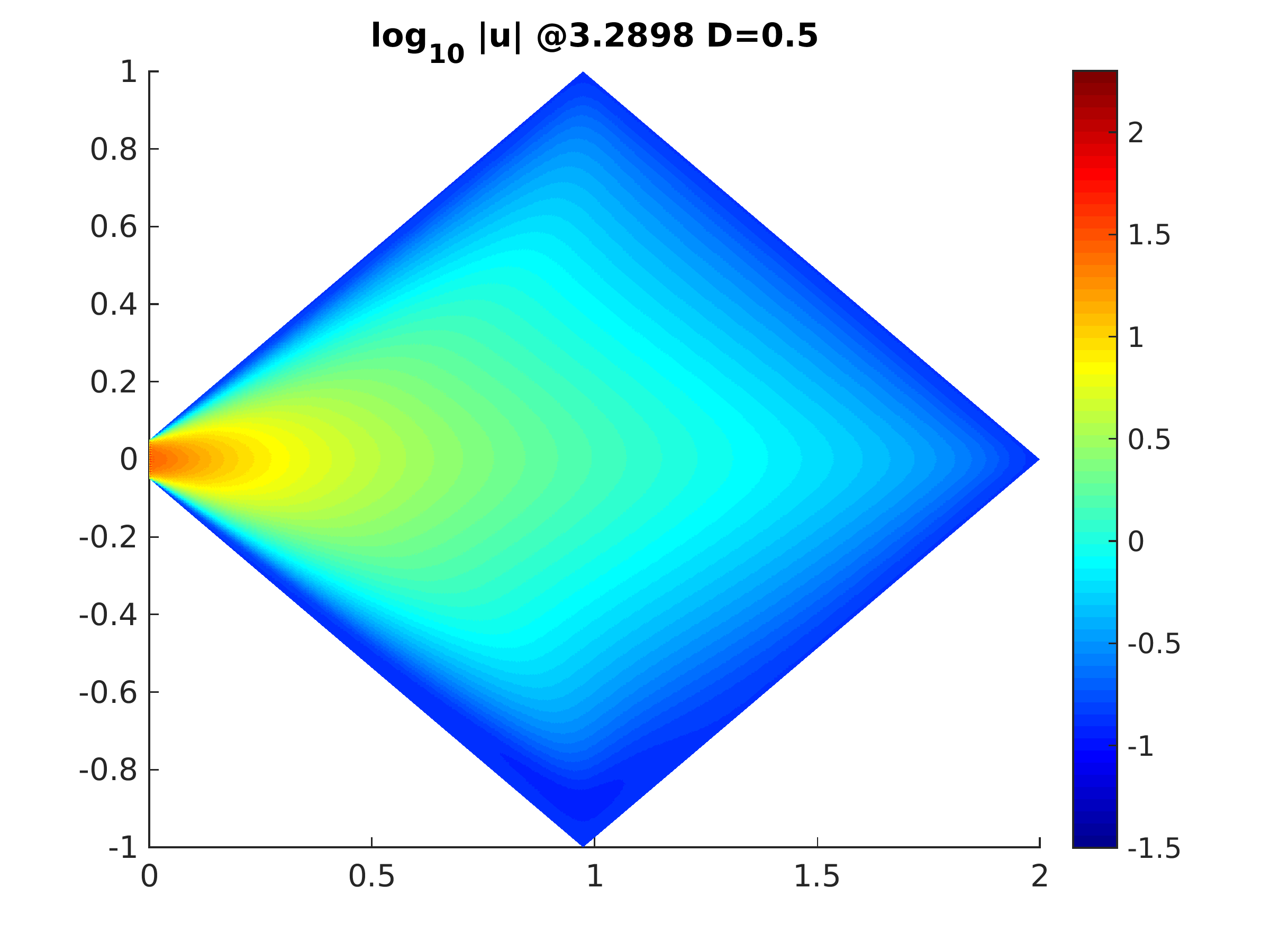}
 \includegraphics[width=.32\textwidth]{E_D05_417636}
 \includegraphics[width=.32\textwidth]{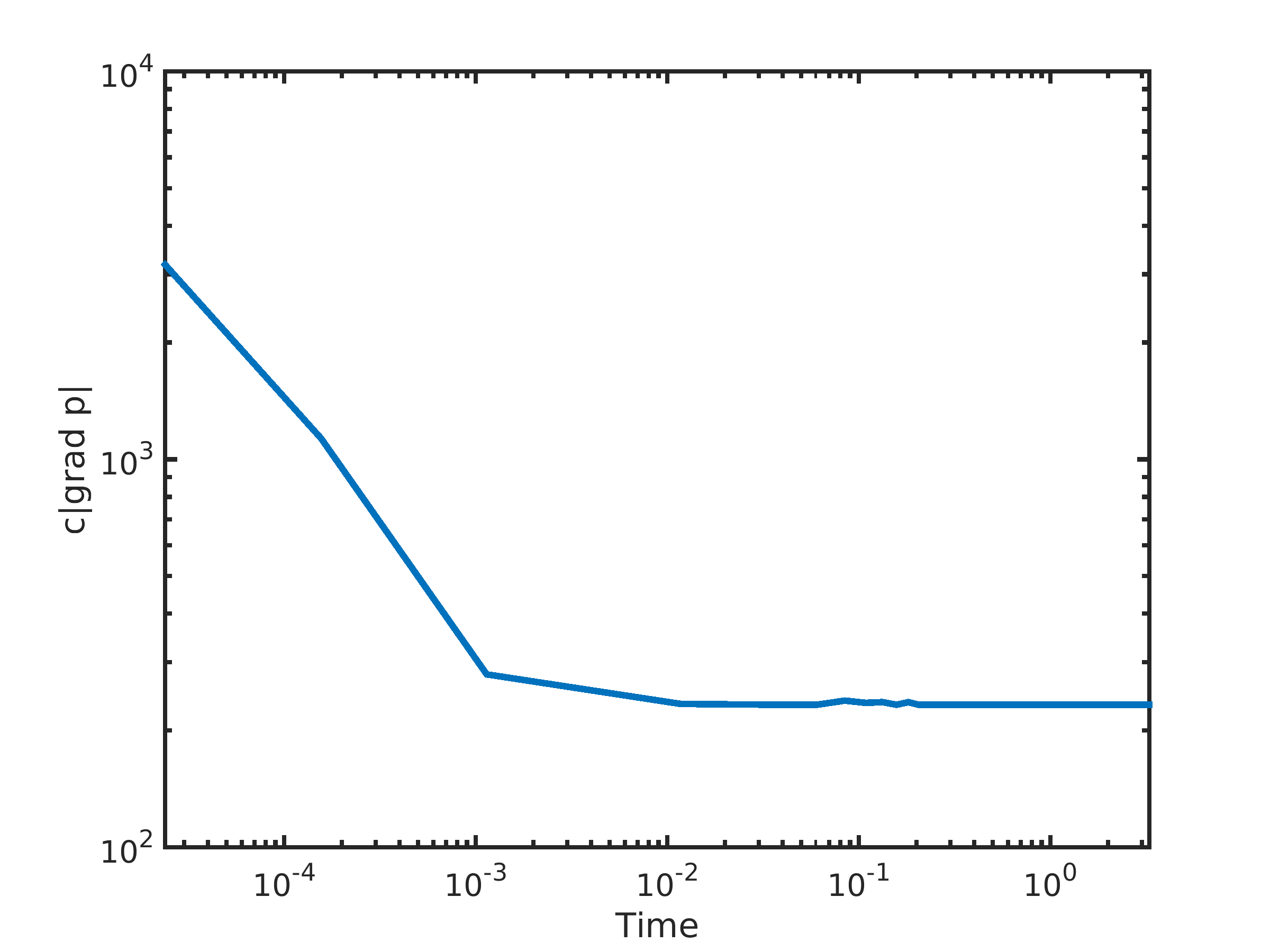}
 \caption{Stationary velocity $|u_h^k|$ for $\gamma=\frac{1}{2}$ and $D=\frac{1}{2}$ in a Log$_{10}$-scale, and corresponding evolution of $\mathcal{E}_h(m_h^k)$.
 The stationary state is reached for $t=3.2898$. 
  \label{fig:evolution_D05}}
\end{figure}

% \begin{figure} \centering
%  \includegraphics[width=.32\textwidth]{u_D01_gamma05_24000}
%  \includegraphics[width=.32\textwidth]{u_D01_gamma05_30000}
%  \includegraphics[width=.32\textwidth]{u_D01_gamma05_36000}\\
%  \includegraphics[width=.32\textwidth]{u_D01_gamma05_1665000}
%  \includegraphics[width=.32\textwidth]{E_D01_gamma05_1665000}
%  \includegraphics[width=.32\textwidth]{cgradp_D01_gamma05_1665000}
%  \caption{Evolution of the velocity $|u_h^k|$ for $\gamma=1/2$ and $D=1/10$ in a Log$_{10}$-scale for different times, and corresponding evolution of $\mathcal{E}_h(m_h^k)$ and $\|c|\nabla p_h^k|\|_{L^\infty(\Omega)}$.\label{fig:evolution_D01}}
% \end{figure}
\begin{figure} \centering
 \includegraphics[width=.32\textwidth]{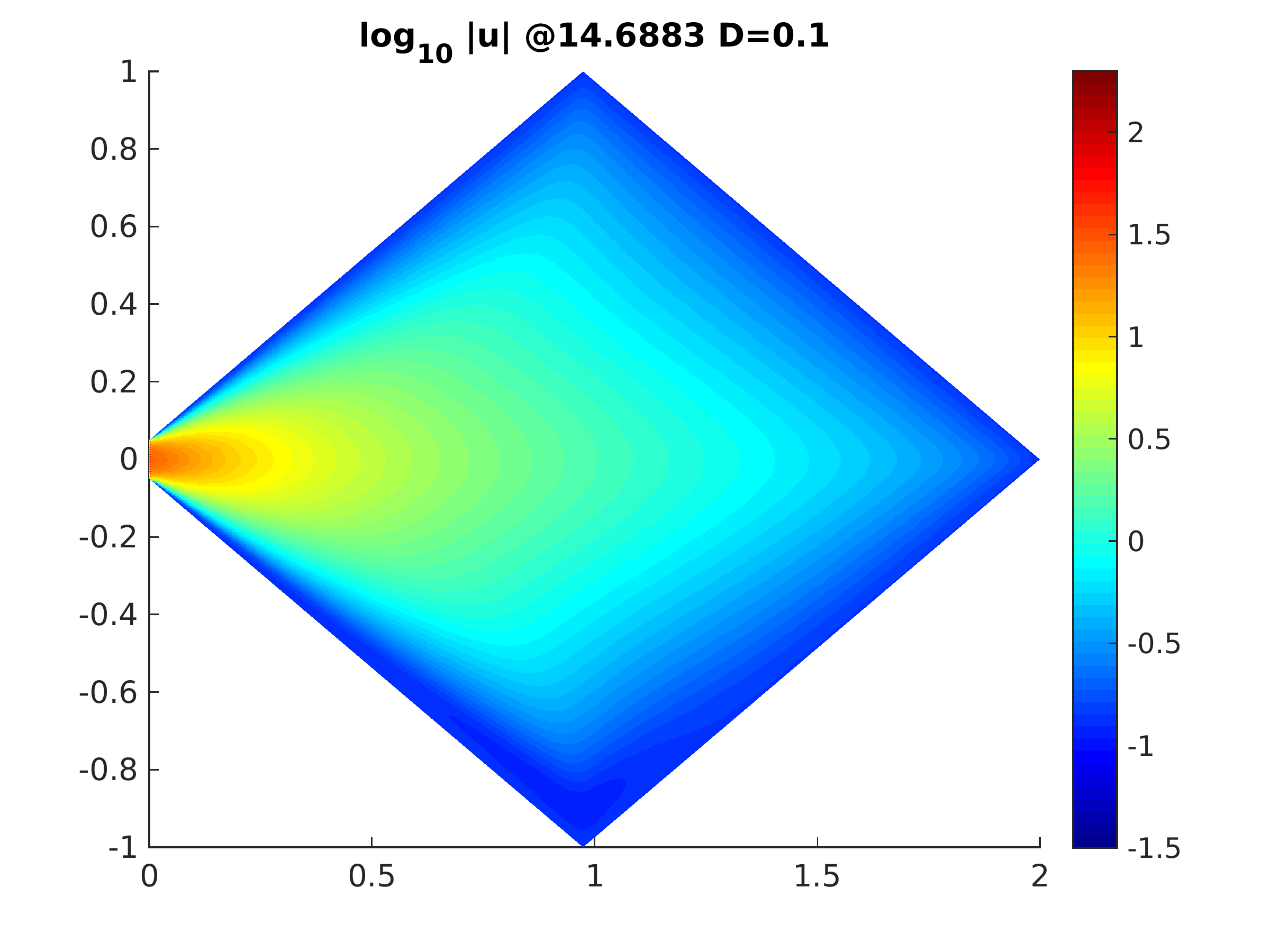}
 \includegraphics[width=.32\textwidth]{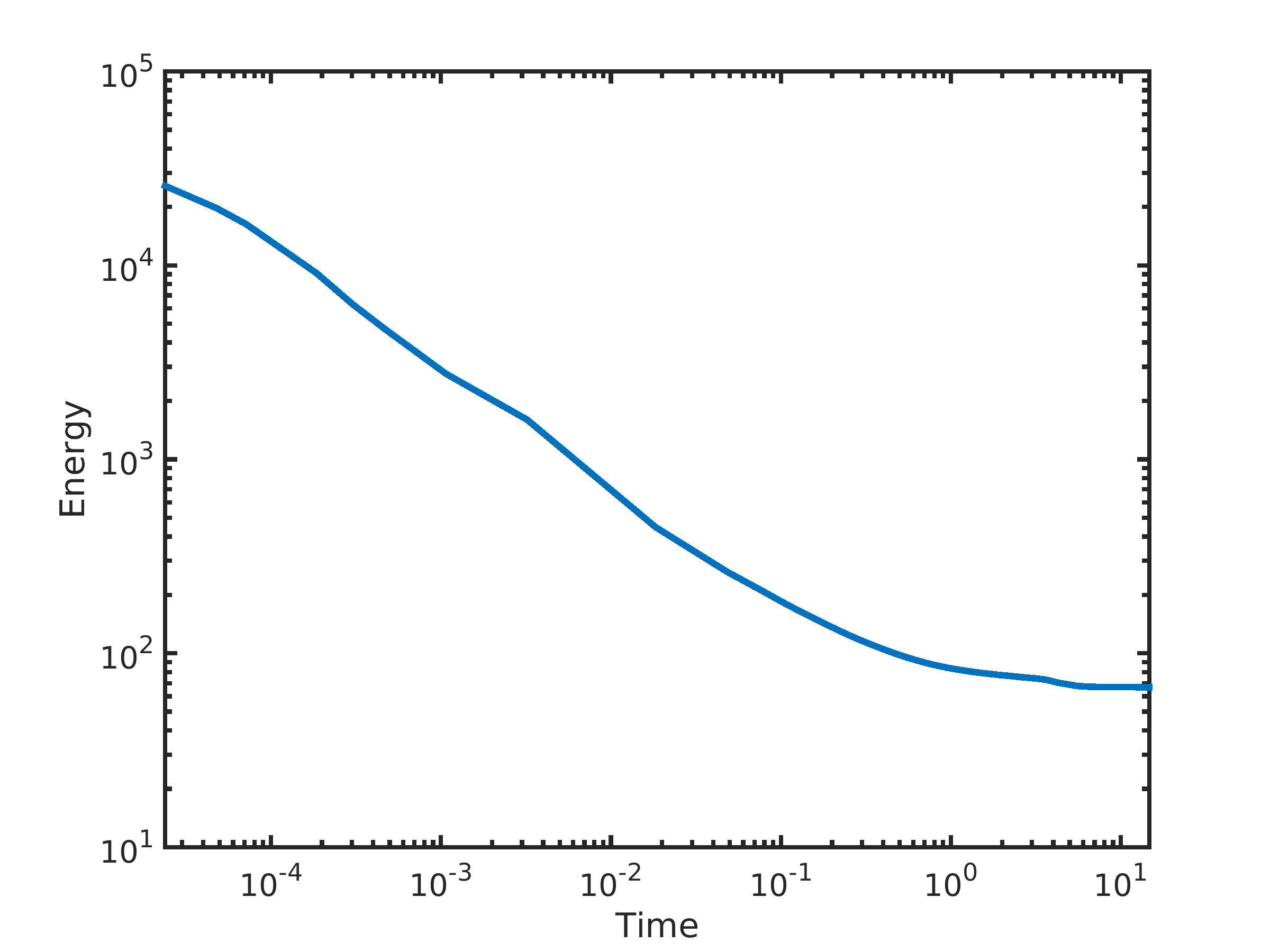}
 \includegraphics[width=.32\textwidth]{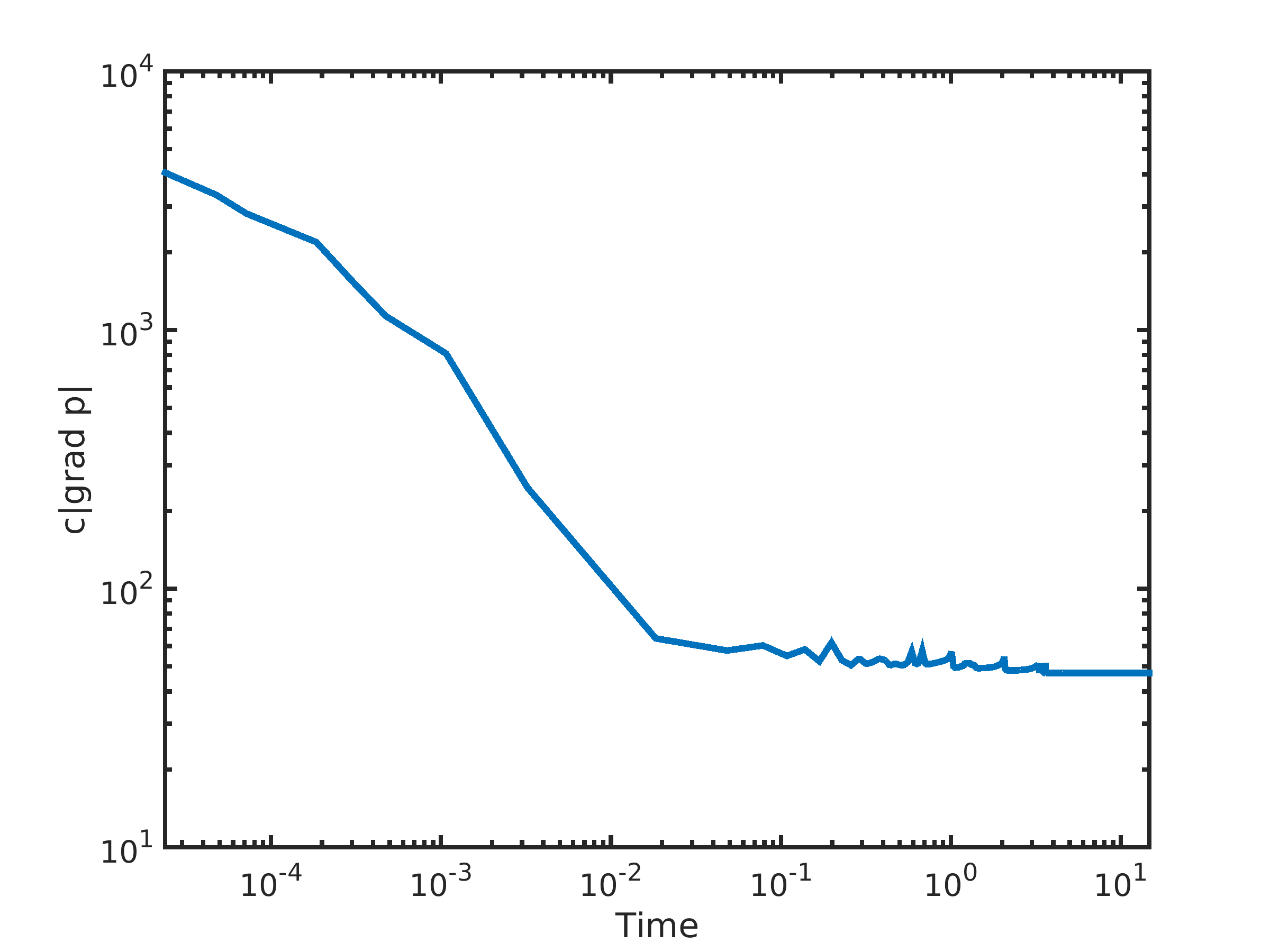}
 \caption{Near stationary velocity $|u_h^k|$ for $\gamma=\frac{1}{2}$ and $D=\frac{1}{10}$ in a Log$_{10}$-scale, and corresponding evolution of $\mathcal{E}_h(m_h^k)$ and $\|c|\nabla p_h^k|\|_{L^\infty(\Omega)}$.
 \label{fig:evolution_D01}}
\end{figure}

% \begin{figure} \centering
%  \includegraphics[width=.32\textwidth]{u_D001_gamma05_30000}
%  \includegraphics[width=.32\textwidth]{u_D001_gamma05_51000}
%  \includegraphics[width=.32\textwidth]{u_D001_gamma05_600000}\\
%  \includegraphics[width=.32\textwidth]{u_D001_gamma05_1923000}
%   \includegraphics[width=.32\textwidth]{E_D001_gamma05_1923000}
%  \includegraphics[width=.32\textwidth]{cgradp_D001_gamma05_1923000}
%  \caption{Evolution of the velocity $|u_h^k|$ for $\gamma=1/2$ and $D=1/100$ in a Log$_{10}$-scale for different times, and corresponding evolution of $\mathcal{E}_h(m_h^k)$ and $\|c|\nabla p_h^k|\|_{L^\infty(\Omega)}$.\label{fig:evolution_D001}}
% \end{figure}

\begin{figure} \centering
 \includegraphics[width=.32\textwidth]{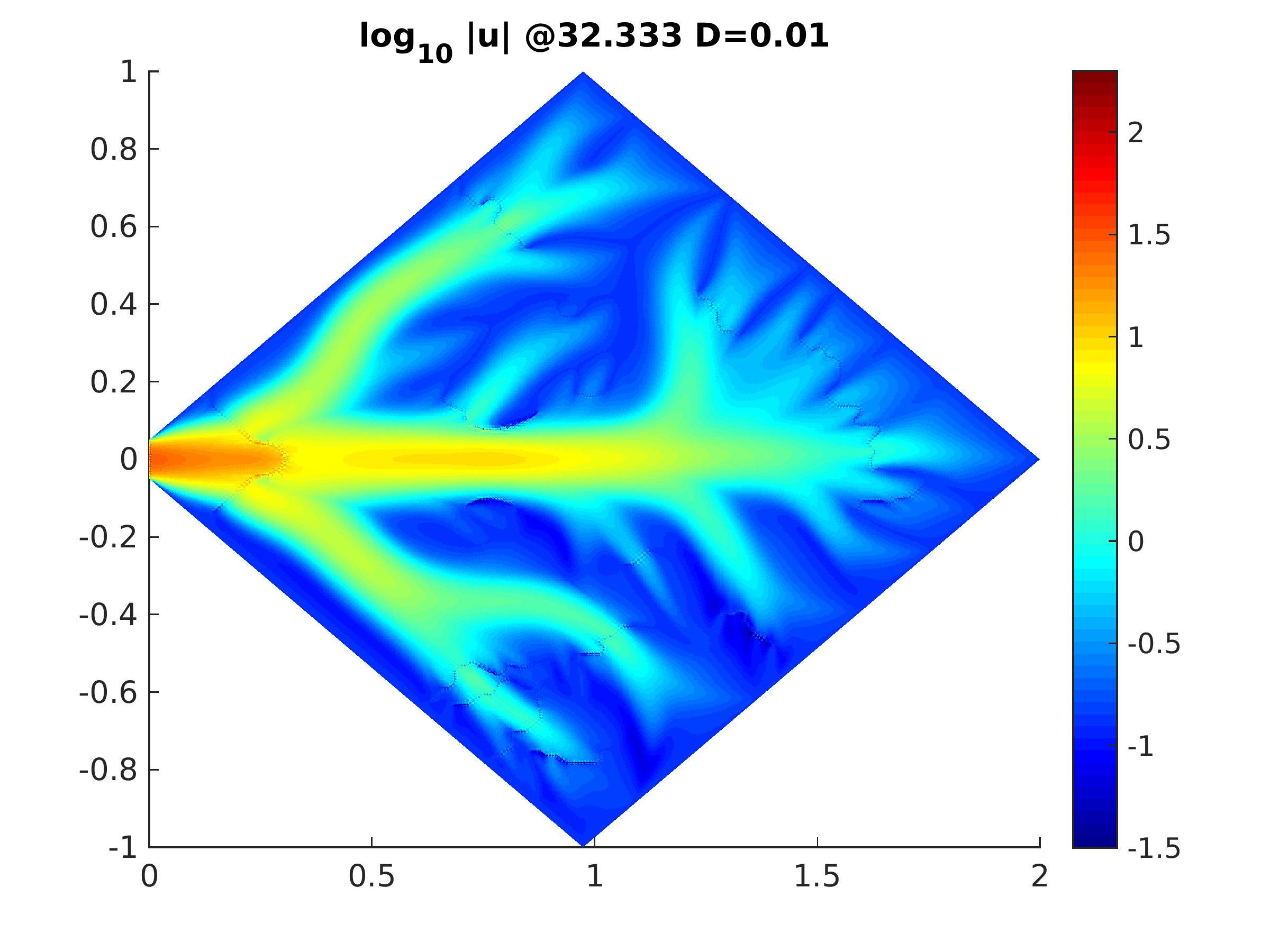}
 \includegraphics[width=.32\textwidth]{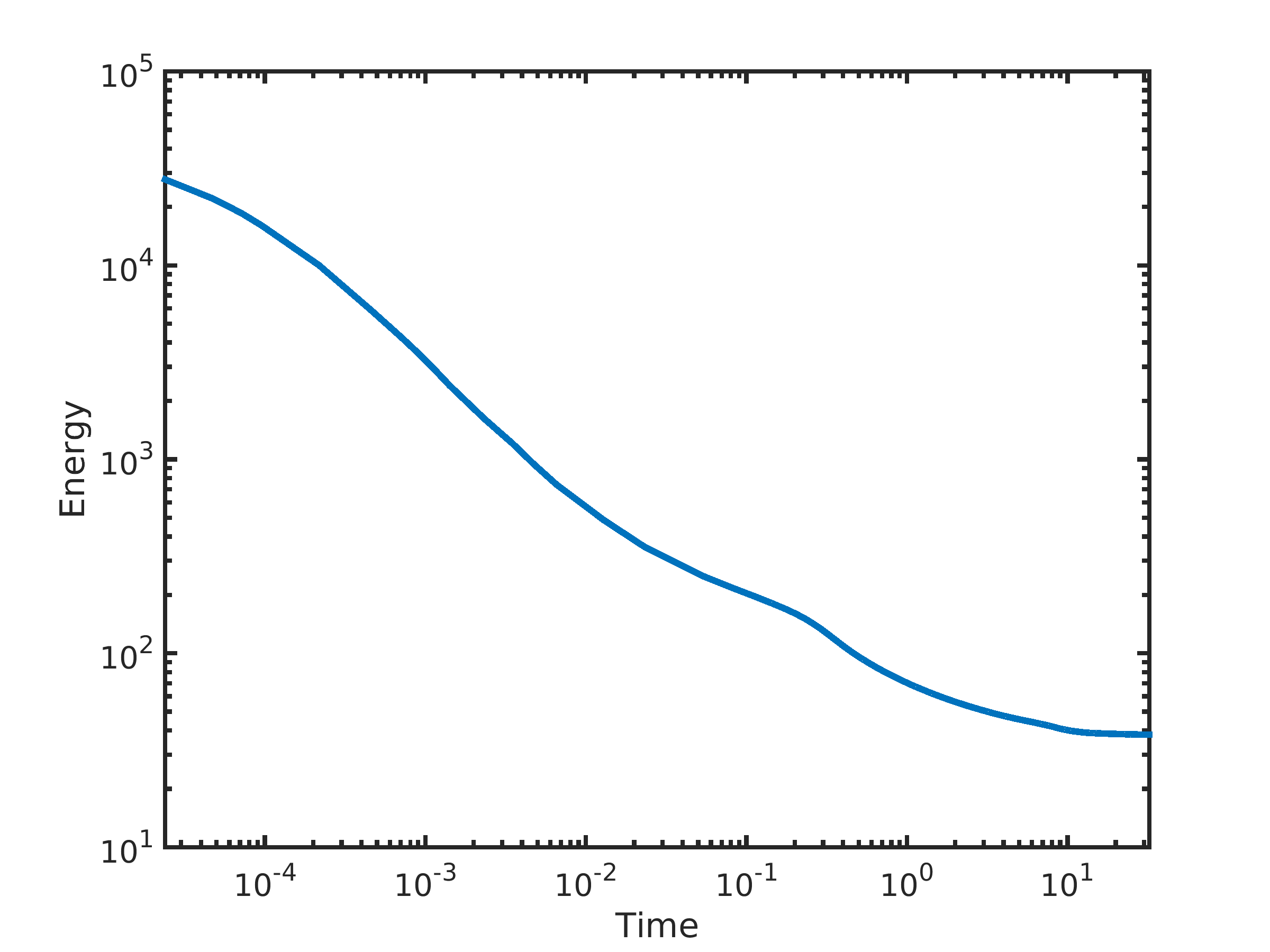}
 \includegraphics[width=.32\textwidth]{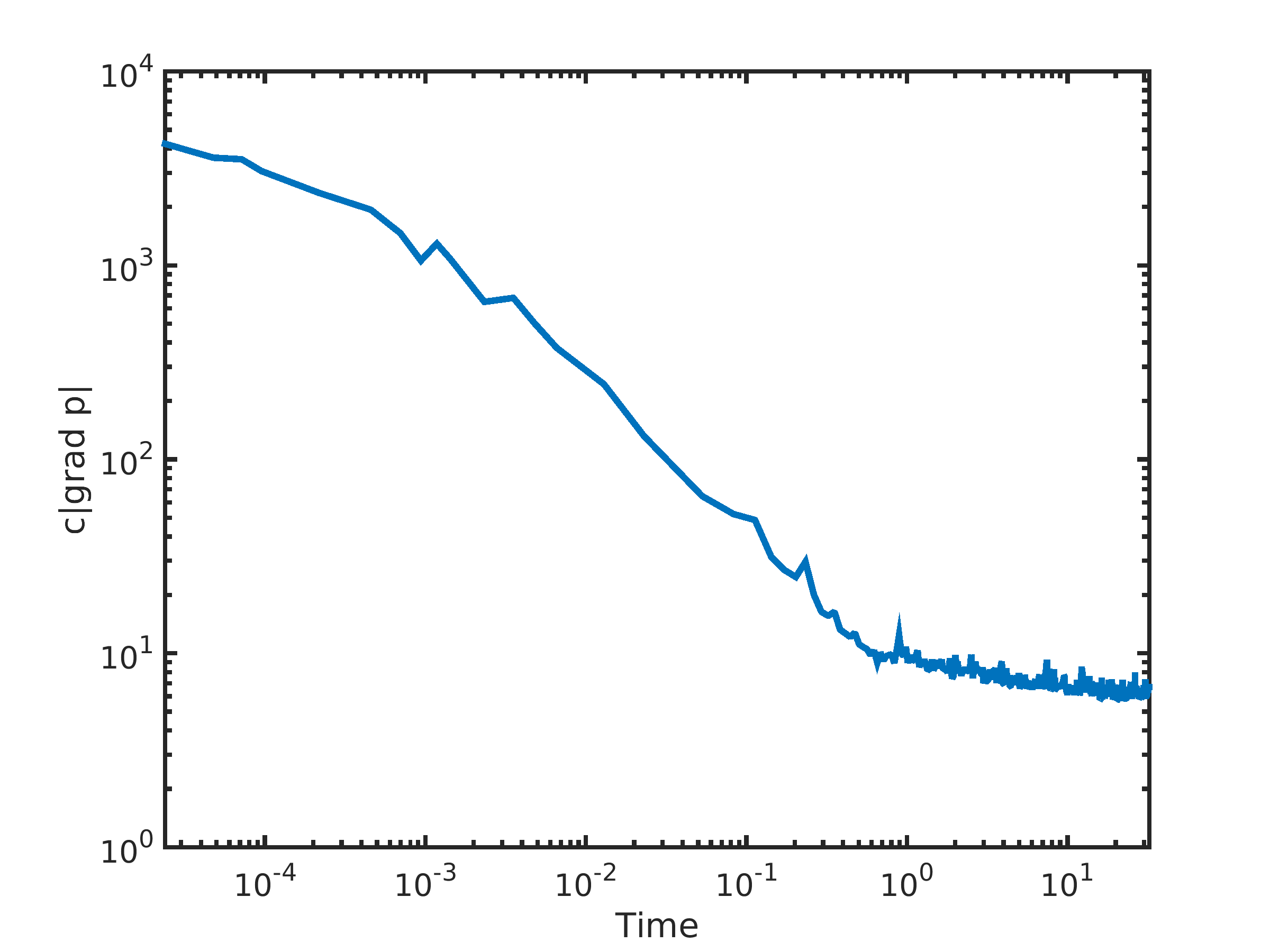}
 \caption{Near stationary velocity $|u_h^k|$ for $\gamma=\frac{1}{2}$ and $D=\frac{1}{100}$ in a Log$_{10}$-scale, and corresponding evolution of $\mathcal{E}_h(m_h^k)$ and $\|c|\nabla p_h^k|\|_{L^\infty(\Omega)}$.\label{fig:evolution_D001}}
\end{figure}

% run titanX
% \begin{figure} \centering
%  \includegraphics[width=.49\textwidth]{u_D0001_gamma05_30000}
%  \includegraphics[width=.49\textwidth]{u_D0001_gamma05_150000}\\
%  \includegraphics[width=.49\textwidth]{u_D0001_gamma05_600000}
%  \includegraphics[width=.49\textwidth]{u_D0001_gamma05_1299000}\\
% %  \includegraphics[width=.49\textwidth]{u_D0001_gamma05_1518000}\\
%  \includegraphics[width=.49\textwidth]{p_D0001_gamma05_1299000}
%  \includegraphics[width=.49\textwidth]{E_D0001_gamma05_1299000}
%  \caption{Evolution of the velocity $|u|$ for $\gamma=1/2$ and $D=1/1000$ in a Log$_{10}$-scale for different times. The bottom left picture shows the corresponding pressure, and the bottom right picture shows the evolution of $\mathcal{E}_h$ in a logarithmic scaling.\label{fig:evolution_D0001}}
% \end{figure}

% % run bigdaddy
% \begin{figure} \centering
%  \includegraphics[width=.32\textwidth]{u_D0001_gamma05_1224000}
%  \includegraphics[width=.32\textwidth]{E_D0001_gamma05_1224000}
%  \includegraphics[width=.32\textwidth]{cgradp_D0001_gamma05_1224000}
%  \caption{Velocity $|u_h^k|$ for $\gamma=1/2$ and $D=1/1000$ in a Log$_{10}$-scale, and corresponding evolution of $\mathcal{E}_h(m_h^k)$ and $\|c|\nabla p_h^k|\|_{L^\infty(\Omega)}$.\label{fig:evolution_D0001}}
% \end{figure}

% run titanx
\begin{figure} \centering
 \includegraphics[width=.32\textwidth]{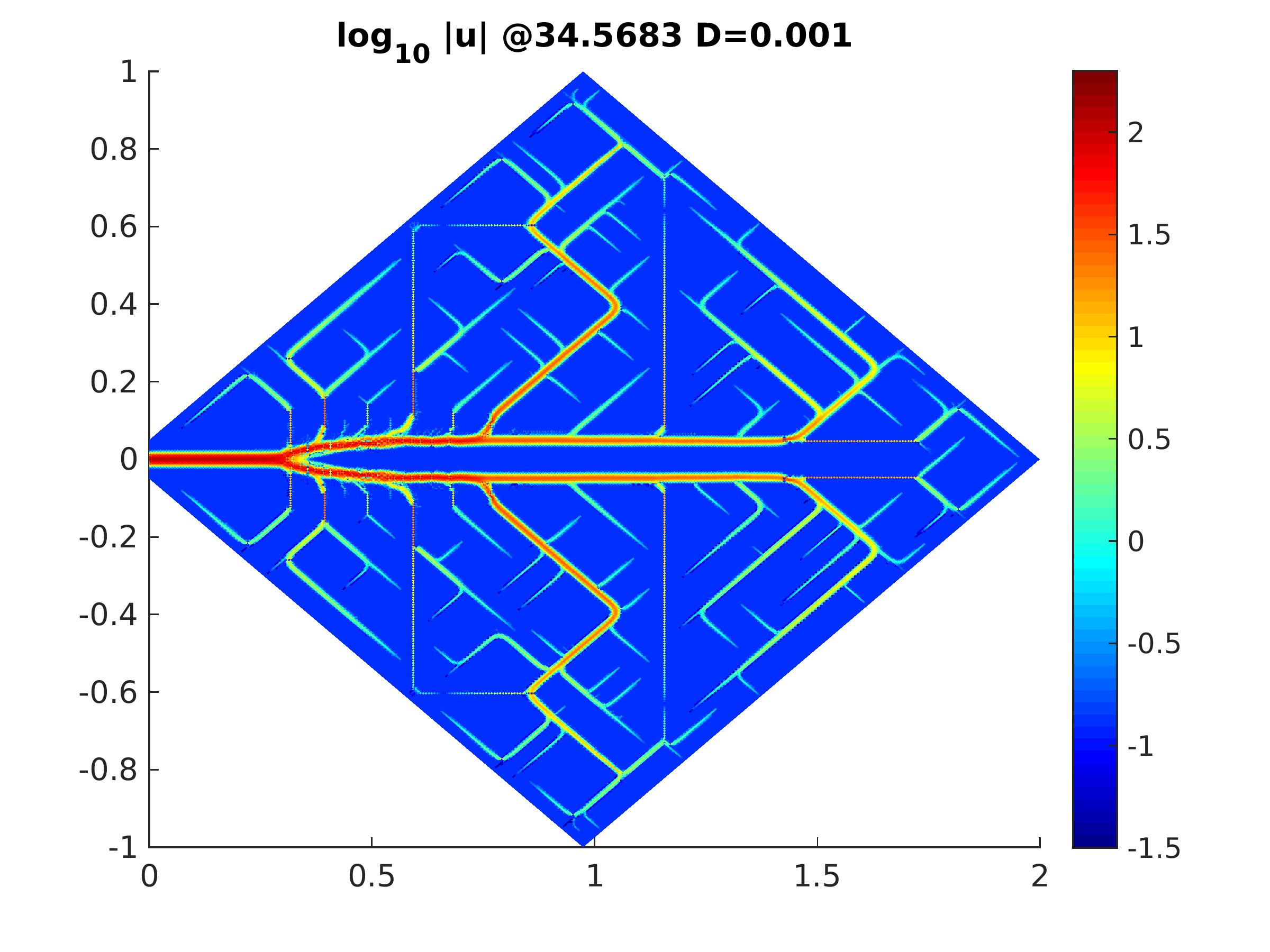}
 \includegraphics[width=.32\textwidth]{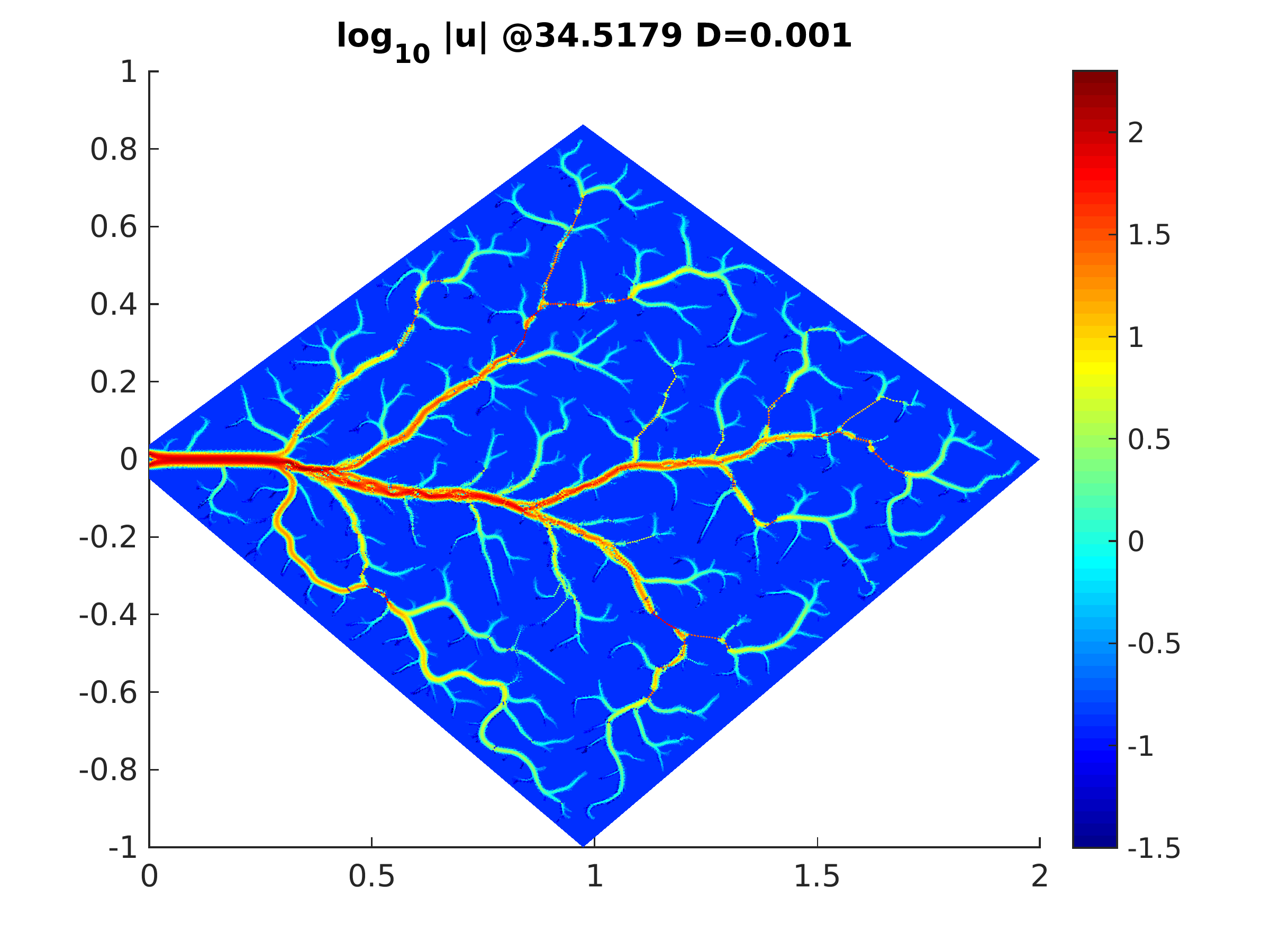}
 \includegraphics[width=.32\textwidth]{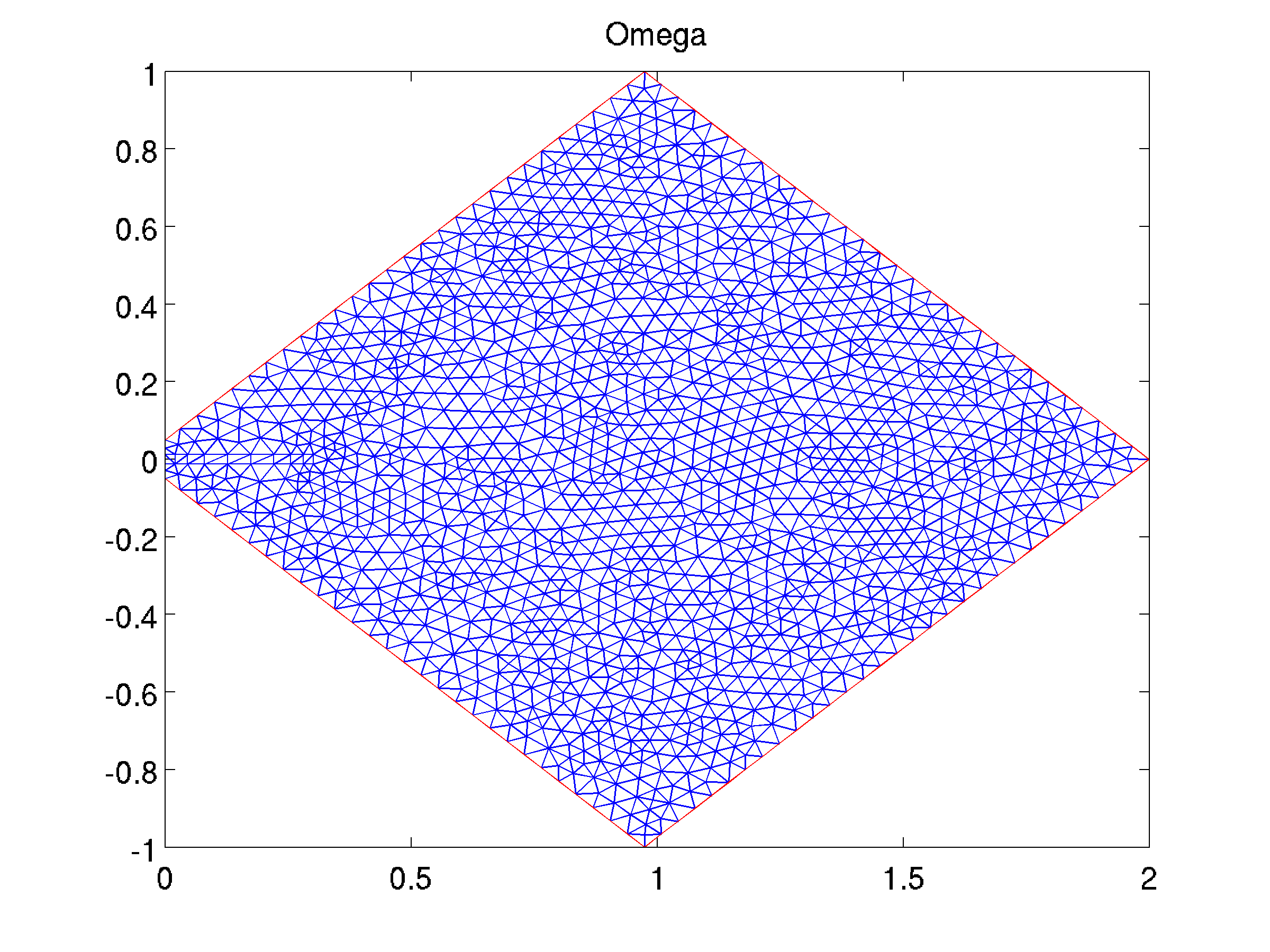}
 \caption{Left panel: Velocity $|u_h^k|$ for $\gamma=\frac{1}{2}$ and $D=\frac{1}{1000}$ in a Log$_{10}$-scale computed on a refined triangulation from the uniform grid shown in Figure~\ref{fig:domain} with $102,905$ vertices, cf. Section~\ref{sec:setup}. Corresponding velocity (middle panel) computed on a refinement with $107,009$ vertices of the non-uniform grid shown in the right panel. For the refined non-uniform grid we have $\min h_T=0.0017$ and $\max h_T=0.0043$.\label{fig:evolution_D0001}}
\end{figure}

\begin{table}
\centering
 \begin{tabular}{c| r r r }
  $D$ & $\frac{1}{2}$ & $\frac{1}{10}$ & $\frac{1}{100}$ \\
  \hline
%   $k$            & $417\times 10^3$  &  $1494\times 10^{3}$  &    $2574\times 10^3$ & $1224\times 10^3$ \\
  $t^k$           &           $3.2898$   &            $14.6883$  &             $32.333$ \\
  $\E_{h,t}^{k}$  & $-1.1\times 10^{-5}$ &  $-2.9\times 10^{-6}$ &  $-1.5\times 10^{-2}$\\
  $m_{h,t}^k$     &  $6.2\times 10^{-7}$ &  $4.5\times 10^{-2}$  &   $2.8\times 10^{-1}$\\
  $s_k$           &           $1.344153$ &           $1.348375$  &            $1.576061$\\
 \end{tabular}
 \caption{Stationarity and sparsity measures for different values of $D$ and $\gamma=\frac{1}{2}$, see Section~\ref{sec:varying_D}. 
%  Compared to Table~\ref{tab:sparsity_gamma} the relatively low sparsity index $s_k$ for $\gamma=\frac{1}{2}$ is explained by the fact that we have not yet reached a stationary state.
\label{tab:sparsity_D}}
\end{table}
% \begin{table}
% \centering
%  \begin{tabular}{c| r r r r}
%   $D$ & $\frac{1}{2}$ & $\frac{1}{10}$ & $\frac{1}{100}$ & $\frac{1}{1000}$\\
%   \hline
% %   $k$            & $417\times 10^3$  &  $1494\times 10^{3}$  &    $2574\times 10^3$ & $1224\times 10^3$ \\
%   $t^k$           &           $3.2898$   &            $14.6883$  &             $25.283$ & $44.85$\\
%   $\E_{h,t}^{k}$  & $-1.1\times 10^{-5}$ &  $-2.9\times 10^{-6}$ &  $-2.1\times 10^{-2}$& $-3.1\times 10^{-2}$ \\
%   $m_{h,t}^k$     &  $6.2\times 10^{-7}$ &  $4.5\times 10^{-2}$  &   $3.3\times 10^{-1}$& $1.2$\\
%   $s_k$           &           $1.344153$ &           $1.348375$  &            $1.611613$& $3.404015$
%  \end{tabular}
%  \caption{Stationarity and sparsity measures for different values of $D$ and $\gamma=\frac{1}{2}$, see Section~\ref{sec:varying_D}. Compared to Table~\ref{tab:sparsity_gamma} the relatively low sparsity index $s_k$ for $\gamma=\frac{1}{2}$ is explained by the fact that we have not yet reached a stationary state.\label{tab:sparsity_D}}
% \end{table}

\subsection{Varying $\gamma$}\label{sec:varying_gamma}

In order to demonstrate the dependence of the network formation process on the relaxation term, we let $\gamma\in\{\frac{3}{5}, \frac{3}{4}, 1, \frac{3}{2}, 2\}$; for $\gamma=\frac{1}{2}$ see Section~\ref{sec:varying_D}.
For $\gamma\in \{\frac{3}{2},2\}$ the results are depicted in Figure~\ref{fig:evolution_gamma15} and Figure~\ref{fig:evolution_gamma2}. From the evolution of the energies and from Table~\ref{tab:sparsity_gamma} we may conclude that for these two values of $\gamma$ we are near a stationary state, and that for $\gamma>1$ no fine scale structures built up.
For $\gamma=1$, depicted in Figure~\ref{fig:evolution_gamma1}, network structures appear for large times. This may also be indicated by the oscillating behavior of $\grad p$ for larger times. The changes in energy are however already small.
In view of Section~\ref{sec:stable_stationary}, stable solutions should satisfy $c\||\nabla p_h^k|\|_{L^\infty(\Omega)}\leq 1$ in the limiting case $D=0$.
Here, $D=1/1000$, and $c|\nabla p_h^k|\leq 2$ is in accordance with this analysis.

For $\gamma\in \{\frac{3}{5},\frac{3}{4}\}$ we observe fine scale structures, which are depicted in Figure~\ref{fig:evolution_gamma06} and Figure~\ref{fig:evolution_gamma075}.
As remarked in the previous section, the network evolution is influenced by the underlying grid due to coarse discretization, very small diffusion, and very large activiation terms. Note that for small times we have $c\||\nabla p\|_{L^\infty(\Omega)}\approx 4000$, which enters quadratically in the activation term. 
The closer $\gamma$ is to $1$ the less the relaxation term
promotes sparsity.
This might explain that for $\gamma\geq \frac{3}{4}$ we see two branches originating from the Dirichlet boundary $\Gamma=\partial \Omega\cap \{x_1=0\}$. Since the pressure gradient is very large at the transition of Dirichlet to Neumann boundary, artificial conductance is created. Notice that these two branches do not appear for $\gamma\in \{\frac{1}{2},\frac{3}{5}\}$. In this context let us mention that in several situations $L^1$-type minimization can be performed exactly by soft-shrinkage, where values below a certain threshold, which corresponds to $\delta^k$ here, are set to $0$. Hence, small values of $m$, due to round-off or small diffusion, will less affect the evolution.

%
% \begin{figure} \centering
% %  \includegraphics[width=.49\textwidth]{u_gamma06_0}
%  \includegraphics[width=.49\textwidth]{u_gamma06_12000}
%  \includegraphics[width=.49\textwidth]{u_gamma06_24000}\\
% %  \includegraphics[width=.49\textwidth]{u_gamma06_36000}
% %  \includegraphics[width=.49\textwidth]{u_gamma06_48000}\\
%  \includegraphics[width=.49\textwidth]{u_gamma06_60000}
%  \includegraphics[width=.49\textwidth]{u_gamma06_1233000}\\
%  \includegraphics[width=.49\textwidth]{E_gamma06_1233000}
%   \includegraphics[width=.49\textwidth]{cgradp_gamma06_1233000}
% %  \includegraphics[width=.3\textwidth]{domain}
%  \caption{Evolution of the velocity $|u_h^k|$ for $\gamma=0.6$ in a Log$_{10}$-scale for different times. The bottom left picture shows the evolution of the energy $\mathcal{E}_h(m_h^k)$ vs. time in a logarithmic scaling. 
% %  Here $\mathcal{E}_h(t=100)=34.62$. 
%  The bottom right picture shows the evolution of $\|c|\nabla p_h^k|\|_{L^\infty(\Omega)}$ vs. time in a logarithmic scaling of the ordinate.\label{fig:evolution_gamma06}}
% \end{figure}
\begin{figure} \centering
 \includegraphics[width=.32\textwidth]{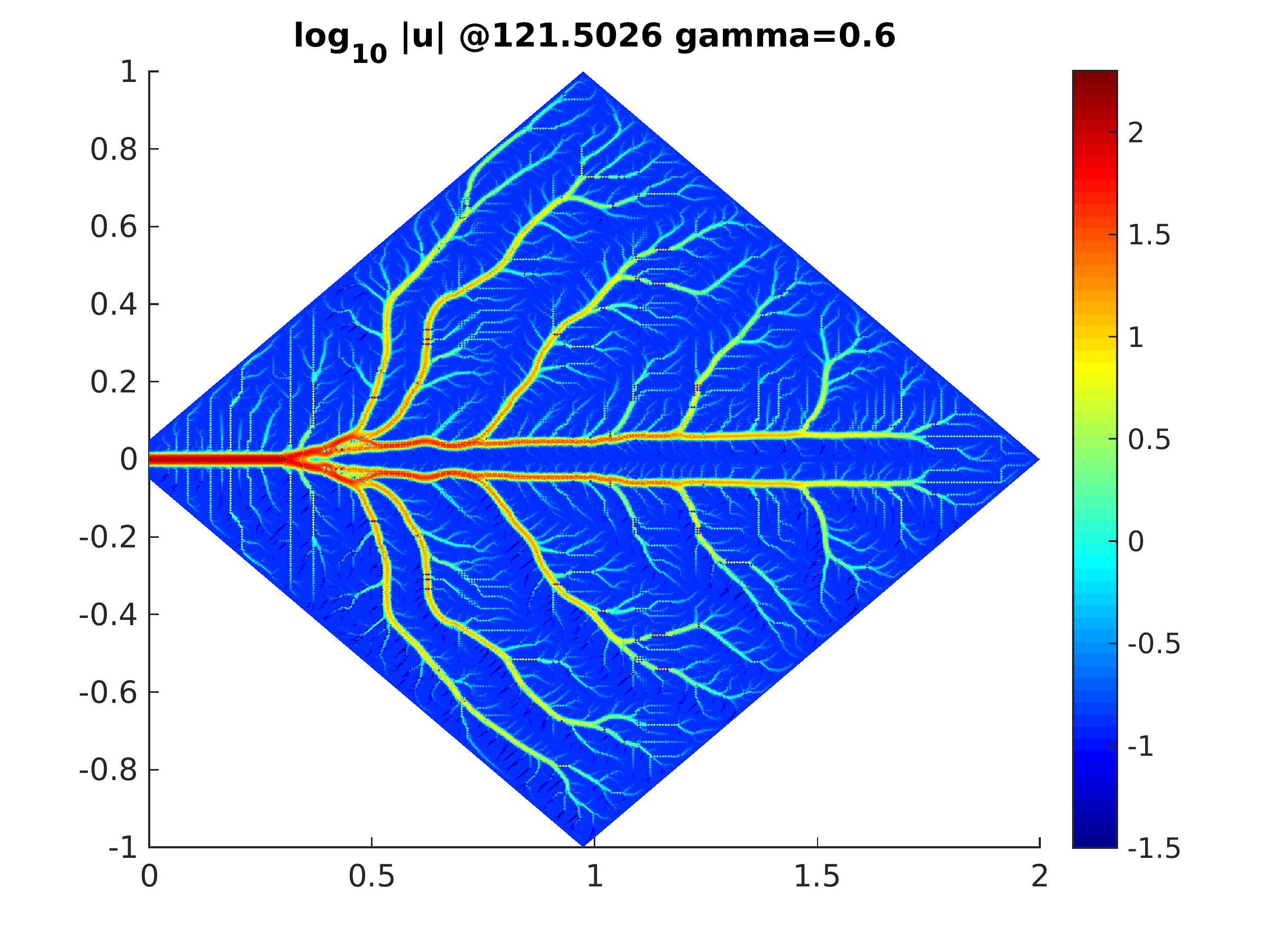}
 \includegraphics[width=.32\textwidth]{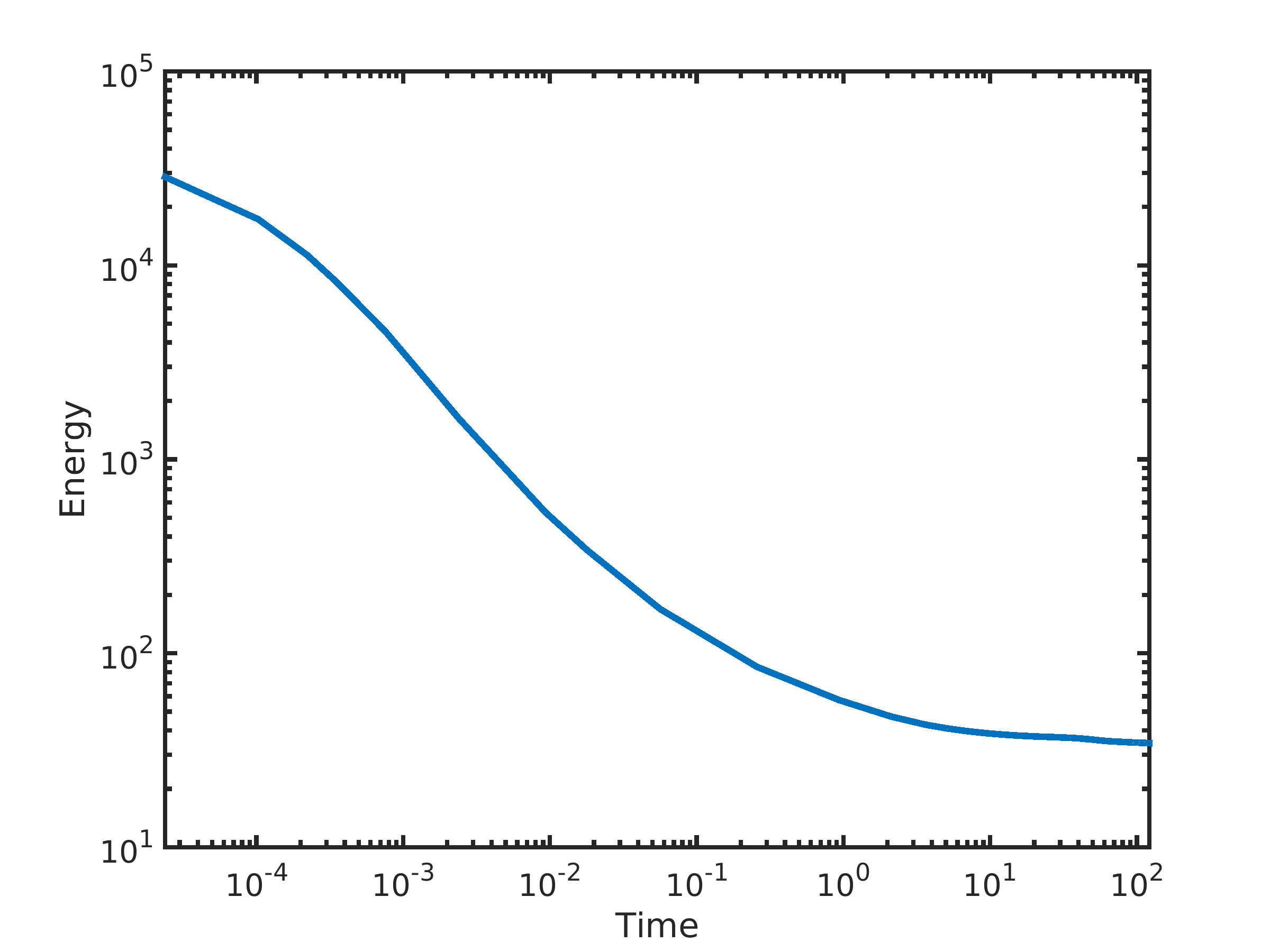}
  \includegraphics[width=.32\textwidth]{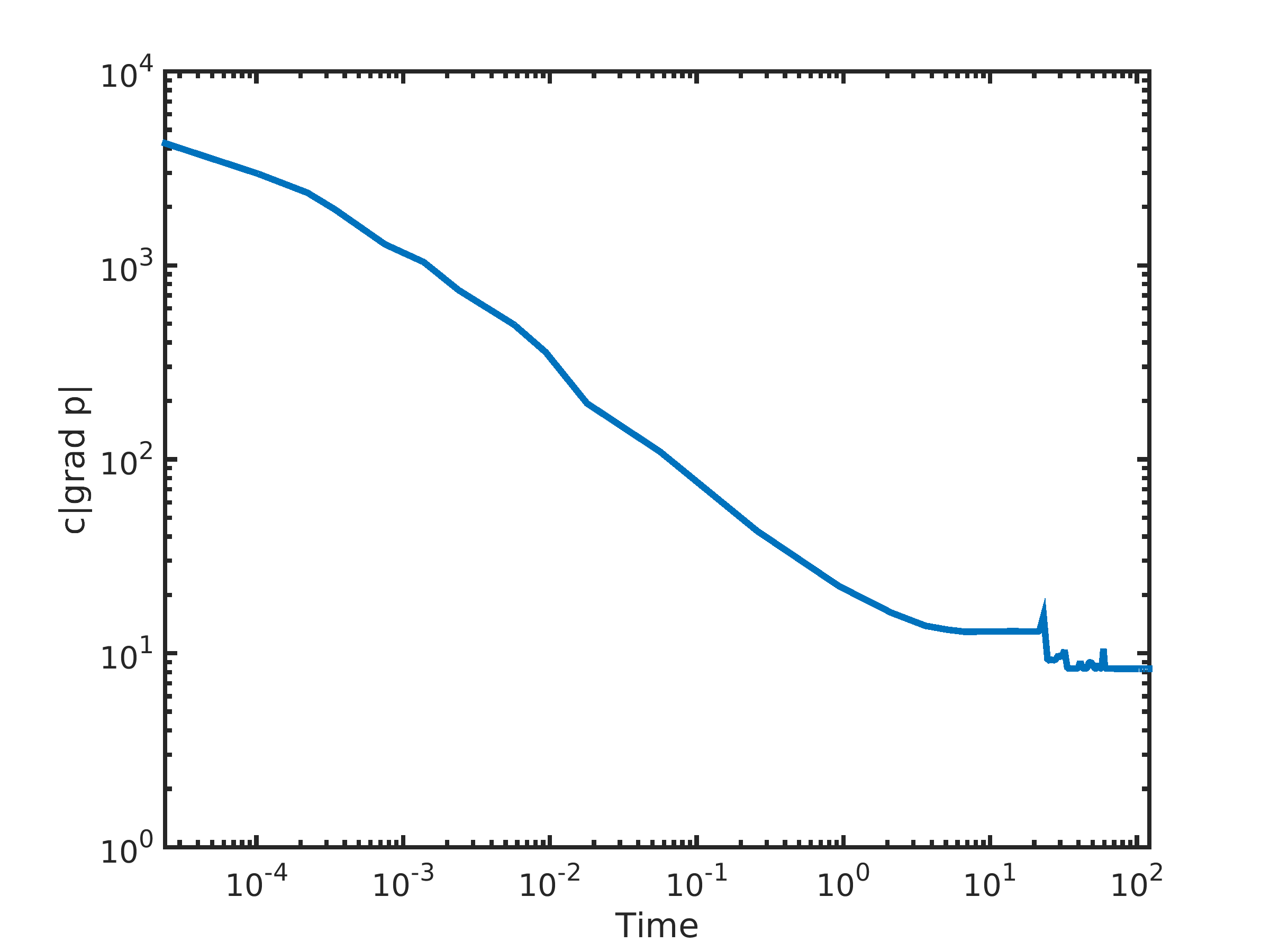}
 \caption{Near stationary velocity $|u_h^k|$ for $\gamma=\frac{3}{5}$, $D=\frac{1}{1000}$ in a Log$_{10}$-scale for different times, and corresponding evolution of the $\mathcal{E}_h(m_h^k)$ and $\|c|\nabla p|\|_{L^\infty(\Omega)}$ vs. time in a logarithmic scaling. 
%  Here $\mathcal{E}_h(t=100)=34.62$. 
%  The bottom right picture shows the evolution of $\|c|\nabla p_h^k|\|_{L^\infty(\Omega)}$ vs. time in a logarithmic scaling of the ordinate.
 \label{fig:evolution_gamma06}}
\end{figure}

\begin{figure} \centering
  \includegraphics[width=.32\textwidth]{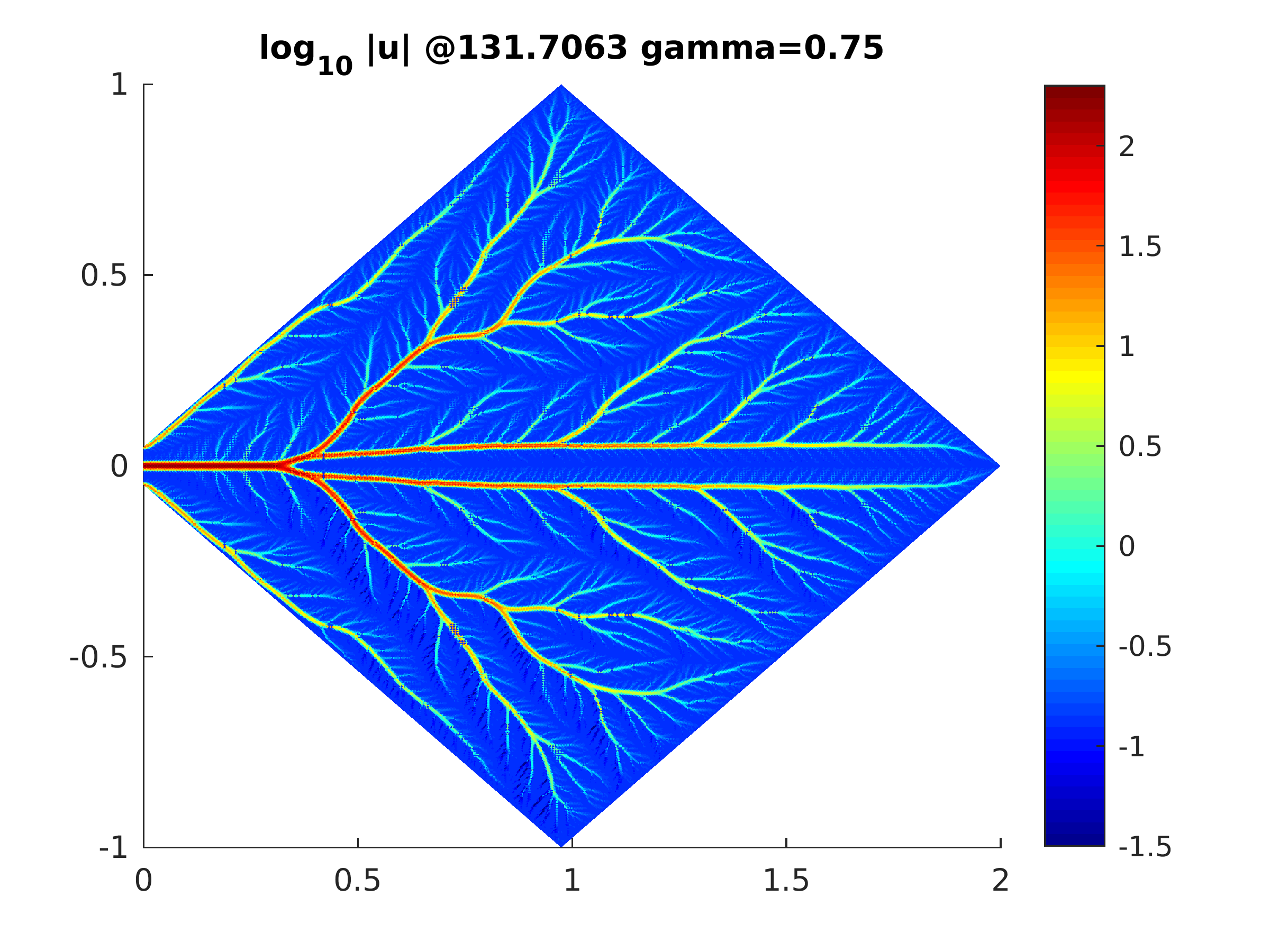}
  \includegraphics[width=.32\textwidth]{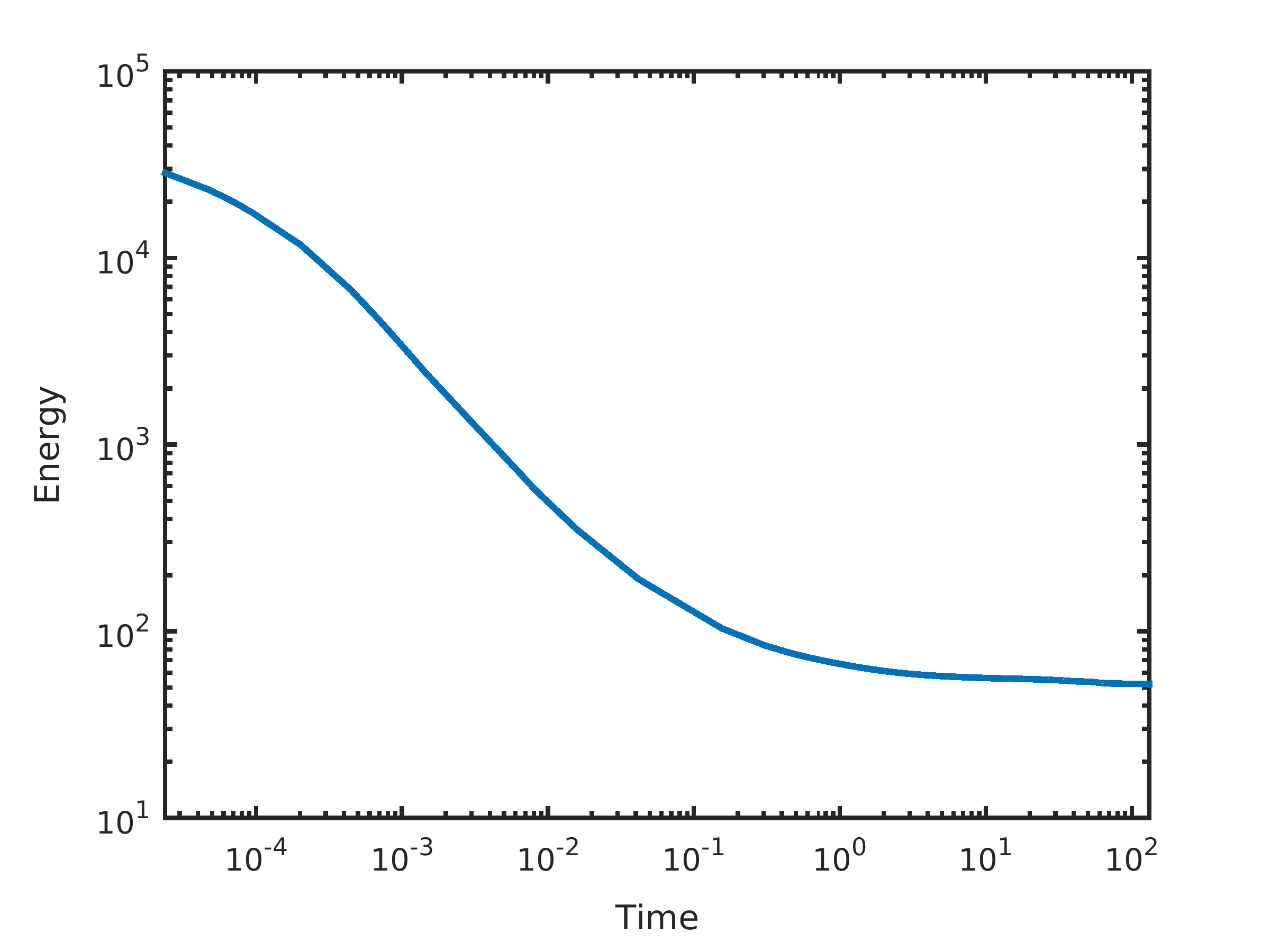}
  \includegraphics[width=.32\textwidth]{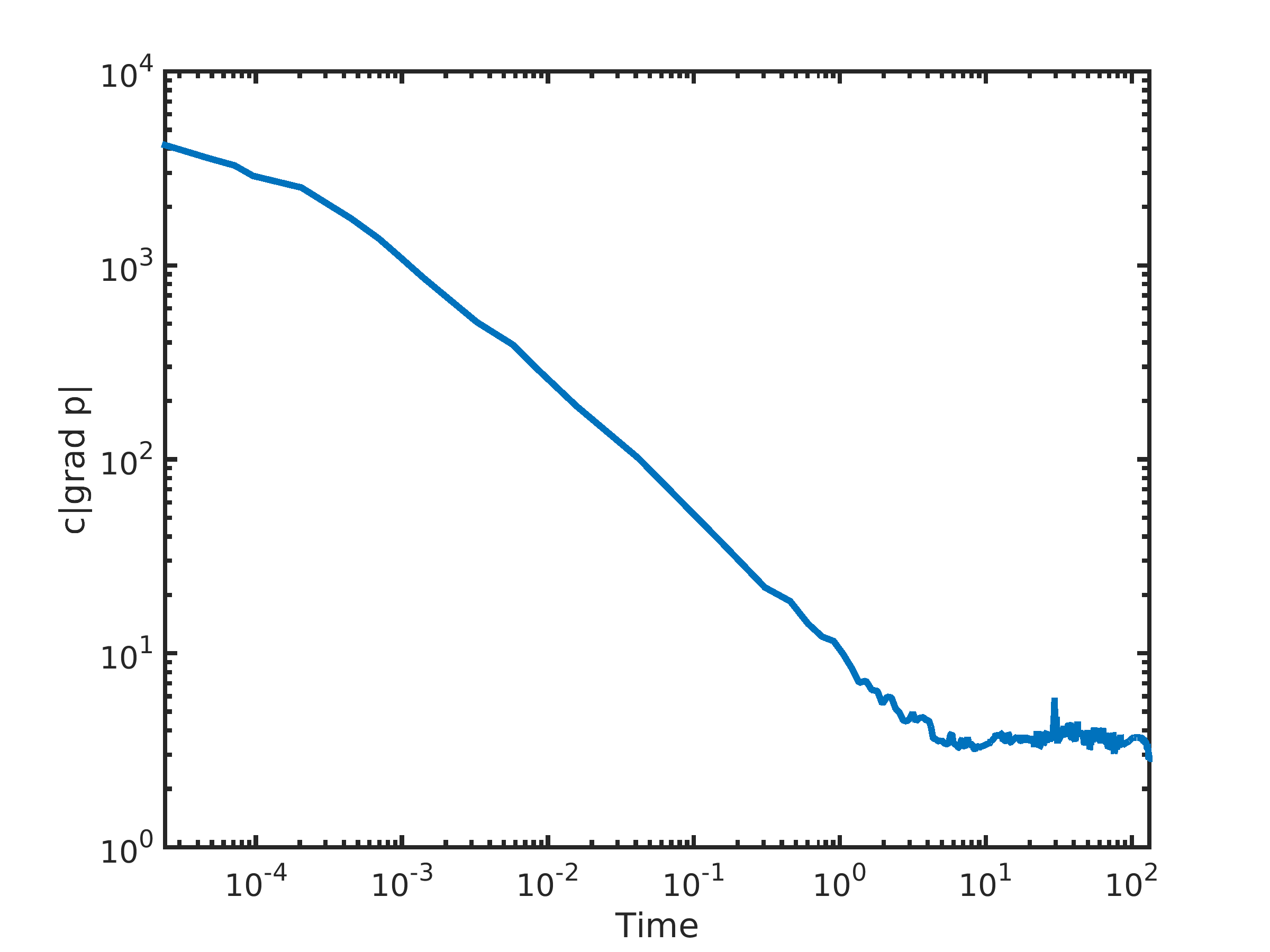}
 \caption{Velocity $|u_h^k|$ for $\gamma=\frac{3}{4}$, $D=\frac{1}{1000}$ in a Log$_{10}$-scale, and corresponding evolution of the $\mathcal{E}_h(m_h^k)$ and $\|c|\nabla p|\|_{L^\infty(\Omega)}$ vs. time in a logarithmic scaling. 
%   Here, $\mathcal{E}_h(t=78.7563)=52.42$.
%  The bottom right picture shows the evolution of the energy $\mathcal{E}$ vs. time in a logarithmic scaling of the ordinate.
 \label{fig:evolution_gamma075}}
\end{figure}

% \begin{figure} \centering
% %  \includegraphics[width=.49\textwidth]{u_gamma1_0}
%  \includegraphics[width=.49\textwidth]{u_gamma1_24000}
%  \includegraphics[width=.49\textwidth]{u_gamma1_48000}\\
% %  \includegraphics[width=.49\textwidth]{u_gamma1_72000}
% %  \includegraphics[width=.49\textwidth]{u_gamma1_96000}\\
% %  \includegraphics[width=.49\textwidth]{u_gamma1_120000}\\
% %  \includegraphics[width=.49\textwidth]{u_gamma1_144000}
%  \includegraphics[width=.49\textwidth]{u_gamma1_168000}
%  \includegraphics[width=.49\textwidth]{u_gamma1_2232000}\\
% %  \includegraphics[width=.49\textwidth]{E_gamma1}
%  \includegraphics[width=.49\textwidth]{E_gamma1_2232000}
%  %  \includegraphics[width=.49\textwidth]{cgradp_gamma1}
%  \includegraphics[width=.49\textwidth]{cgradp_gamma1_2232000}
% %  \includegraphics[width=.3\textwidth]{domain}
%  \caption{Evolution of the velocity $|u_h^k|$ for $\gamma=1$ in a Log$_{10}$-scale for different times. The bottom left picture shows the evolution of the energy $\mathcal{E}_h(m_h^k)$ vs. time in a logarithmic scaling; the bottom right picture shows $\|c|\nabla p_h^k|\|_{L^\infty(\Omega)}$. \label{fig:evolution_gamma1}}
% \end{figure}

\begin{figure} \centering
 \includegraphics[width=.32\textwidth]{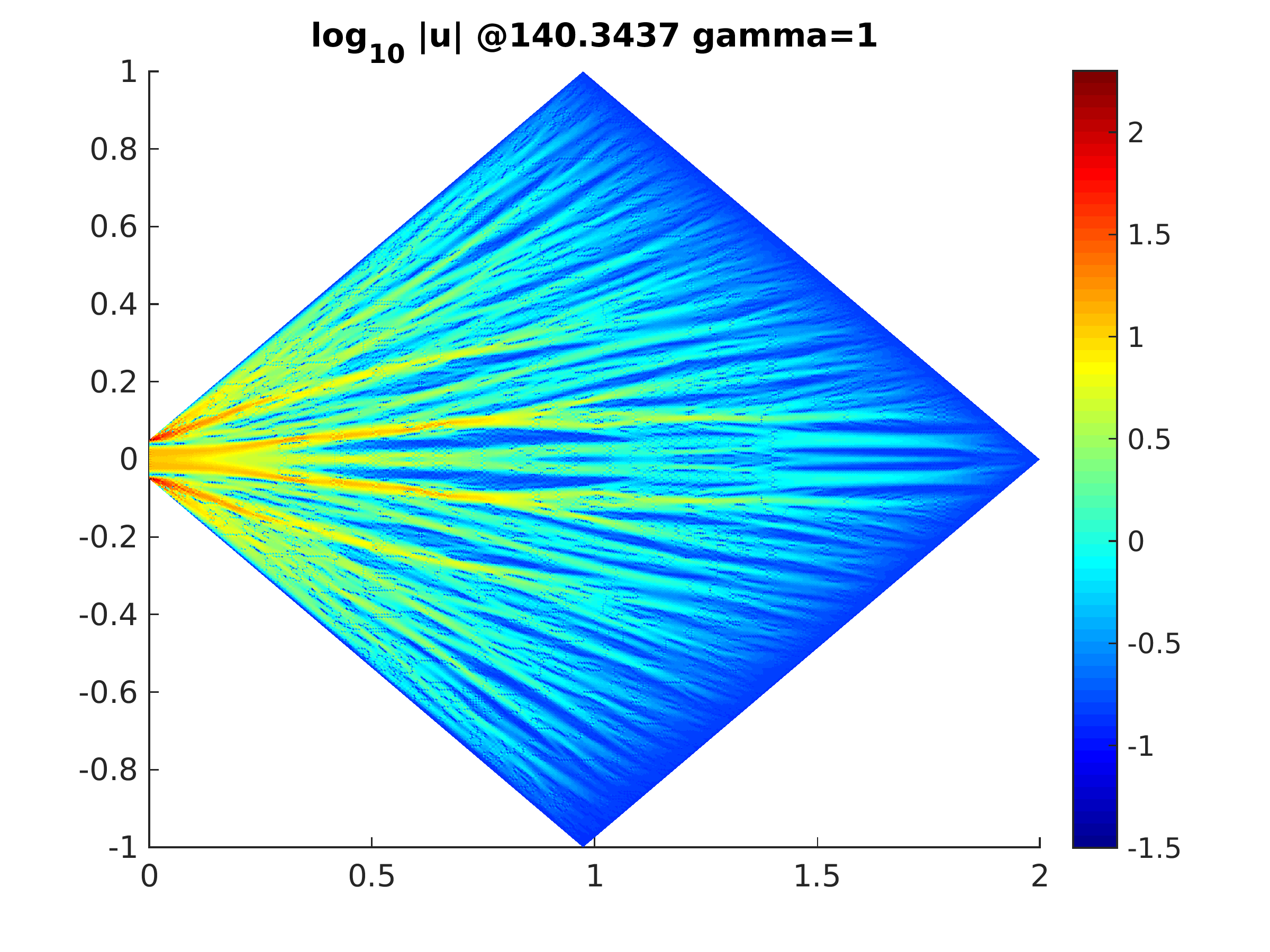}
 \includegraphics[width=.32\textwidth]{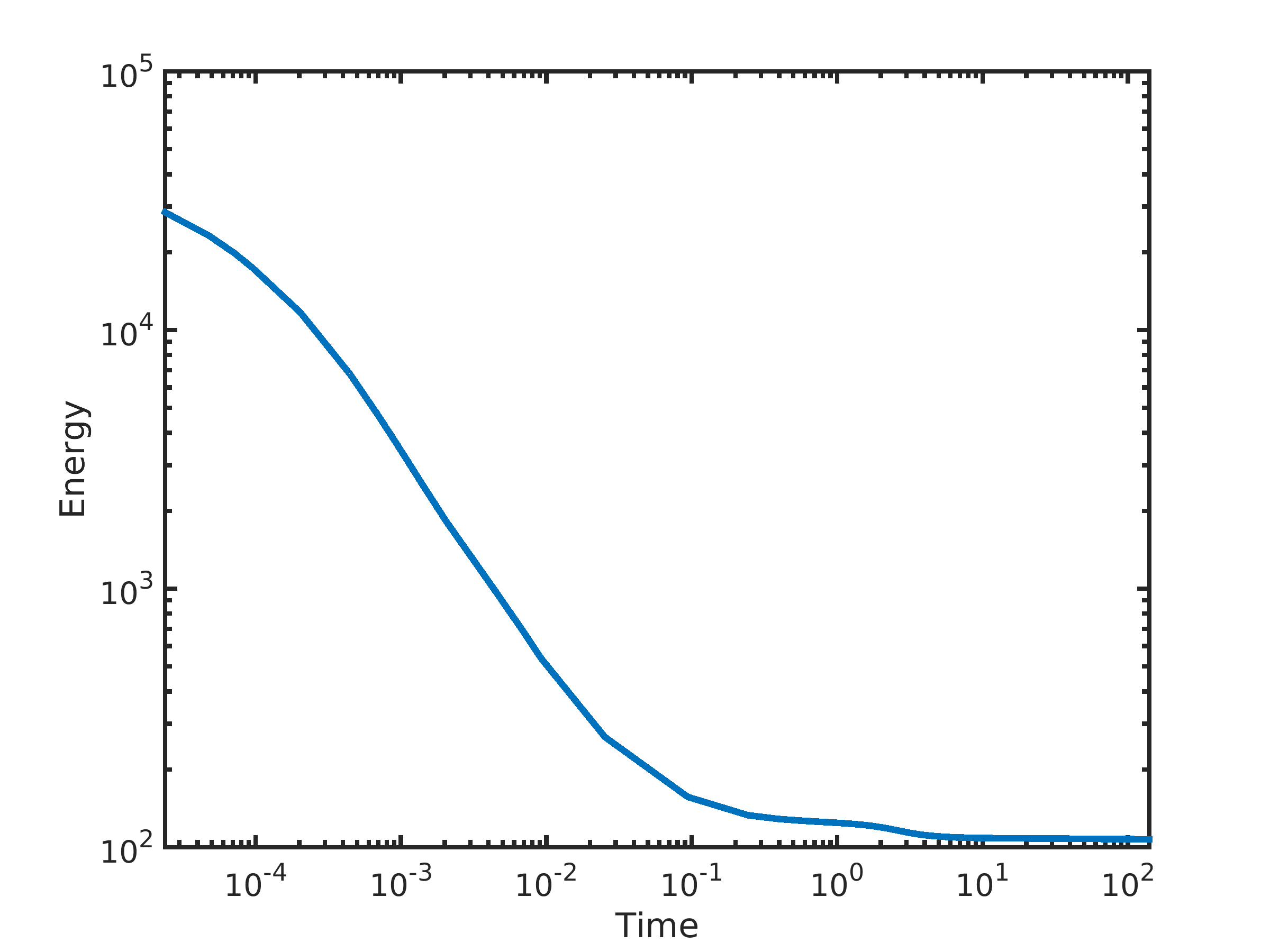}
 \includegraphics[width=.32\textwidth]{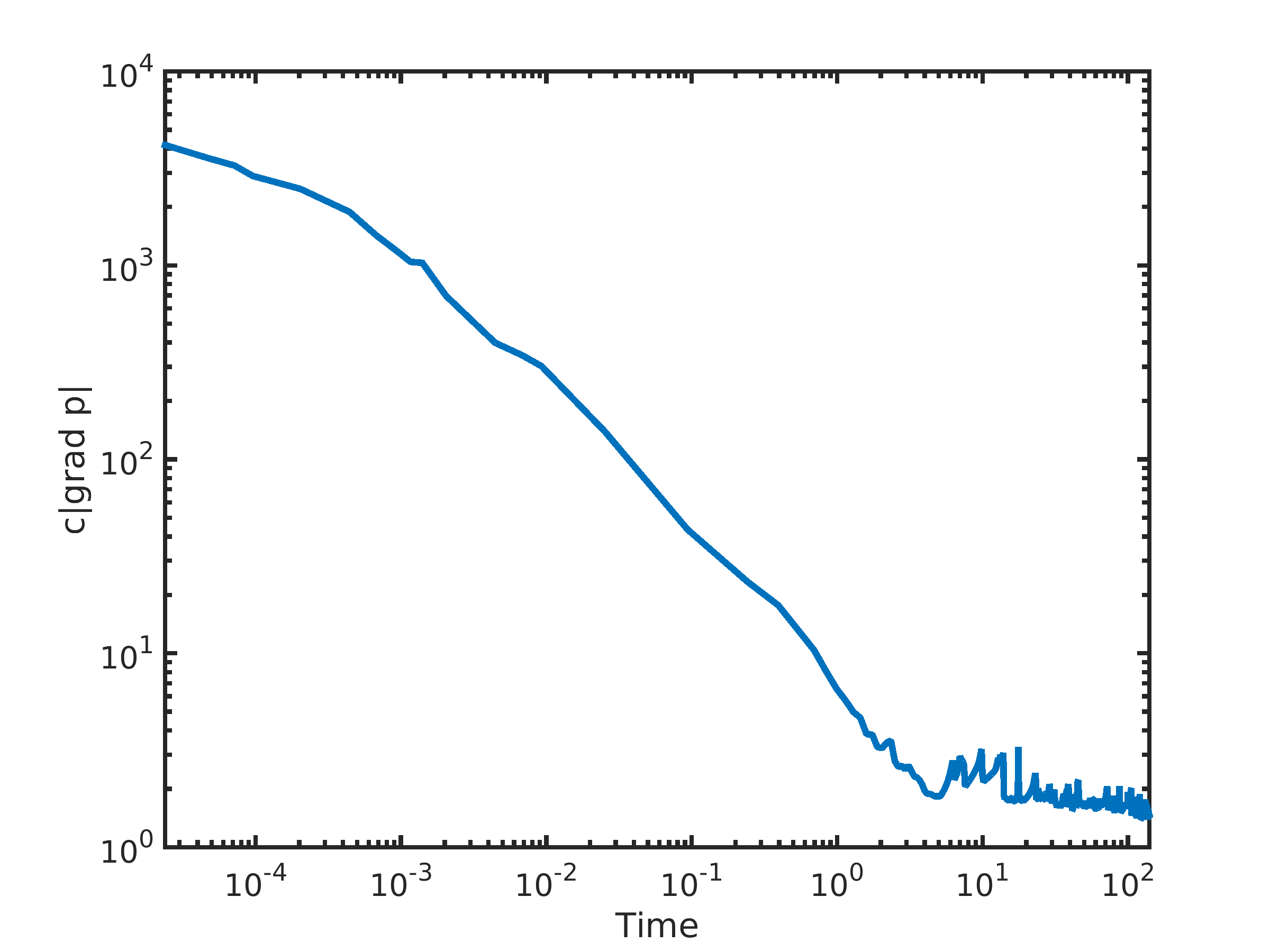}
 \caption{Velocity $|u_h^k|$ for $\gamma=1$, $D=\frac{1}{1000}$ in a Log$_{10}$-scale, and corresponding evolution of the $\mathcal{E}_h(m_h^k)$ and $\|c|\nabla p|\|_{L^\infty(\Omega)}$ vs. time in a logarithmic scaling.  \label{fig:evolution_gamma1}}
\end{figure}

% \begin{figure} \centering
% %  \includegraphics[width=.49\textwidth]{u_gamma15_0}
%  \includegraphics[width=.24\textwidth]{u_gamma15_24000}
%  \includegraphics[width=.24\textwidth]{u_gamma15_48000}
%  \includegraphics[width=.24\textwidth]{u_gamma15_72000}
% %  \includegraphics[width=.49\textwidth]{u_gamma15_96000}
% %  \includegraphics[width=.49\textwidth]{u_gamma15_120000}\\
% %  \includegraphics[width=.49\textwidth]{u_gamma15_144000}
%  \includegraphics[width=.24\textwidth]{u_gamma15_168000}
%  \caption{Evolution of the velocity $|u_h^k|$ for $\gamma=1.5$ in a Log$_{10}$-scale for different times.\label{fig:evolution_gamma15}}
% \end{figure}

\begin{figure} \centering
 \includegraphics[width=.32\textwidth]{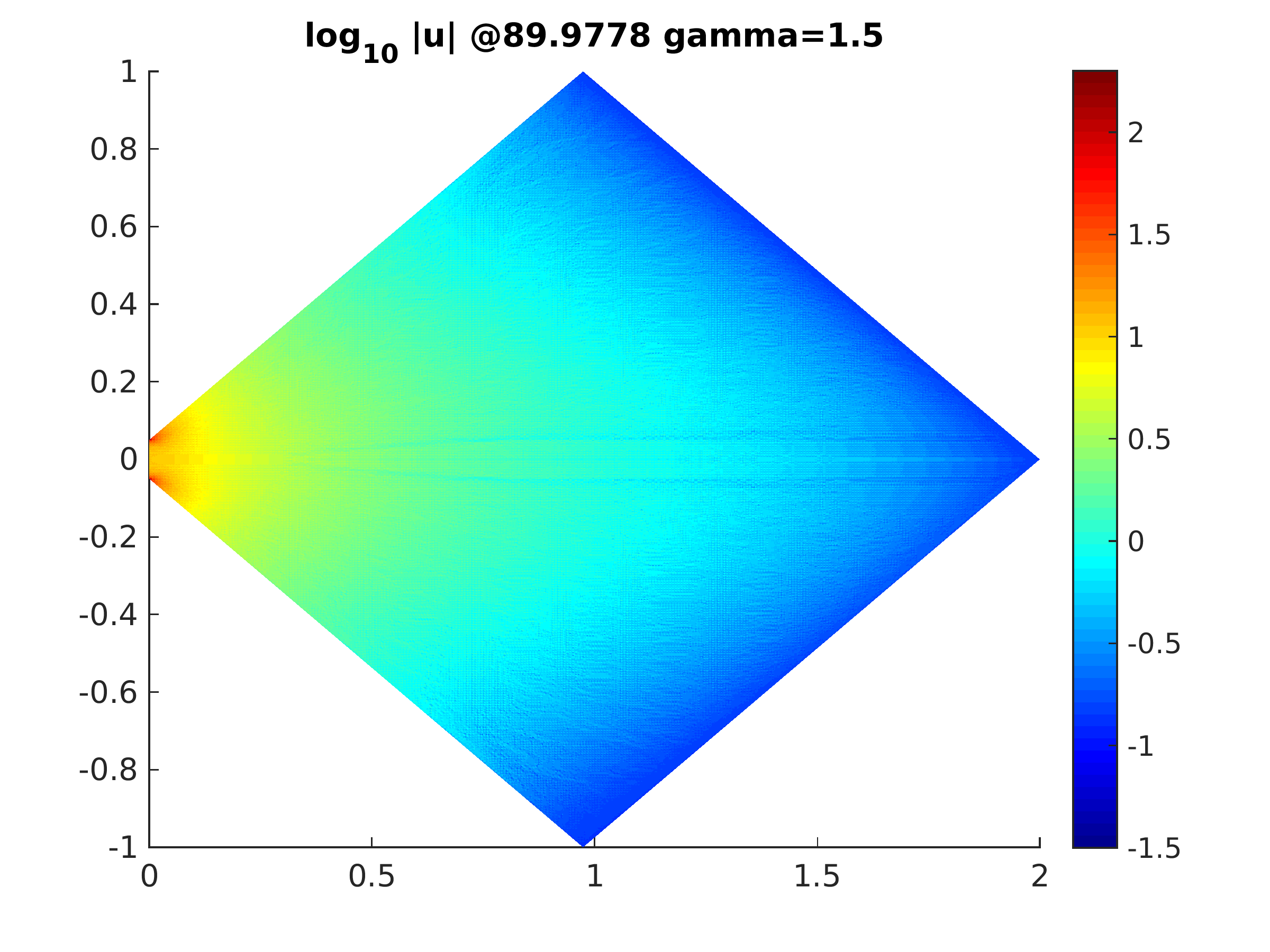}
 \includegraphics[width=.32\textwidth]{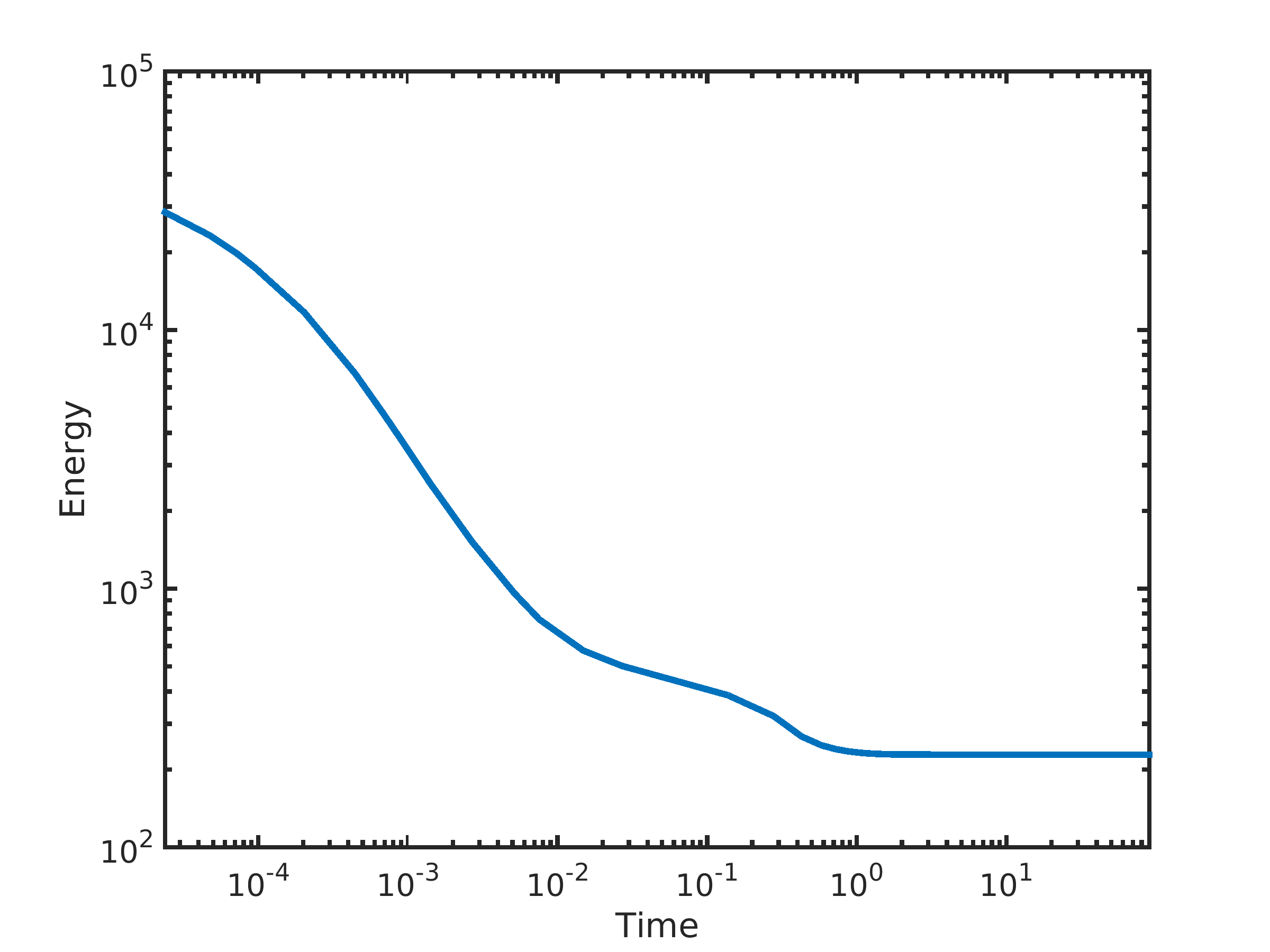}
 \includegraphics[width=.32\textwidth]{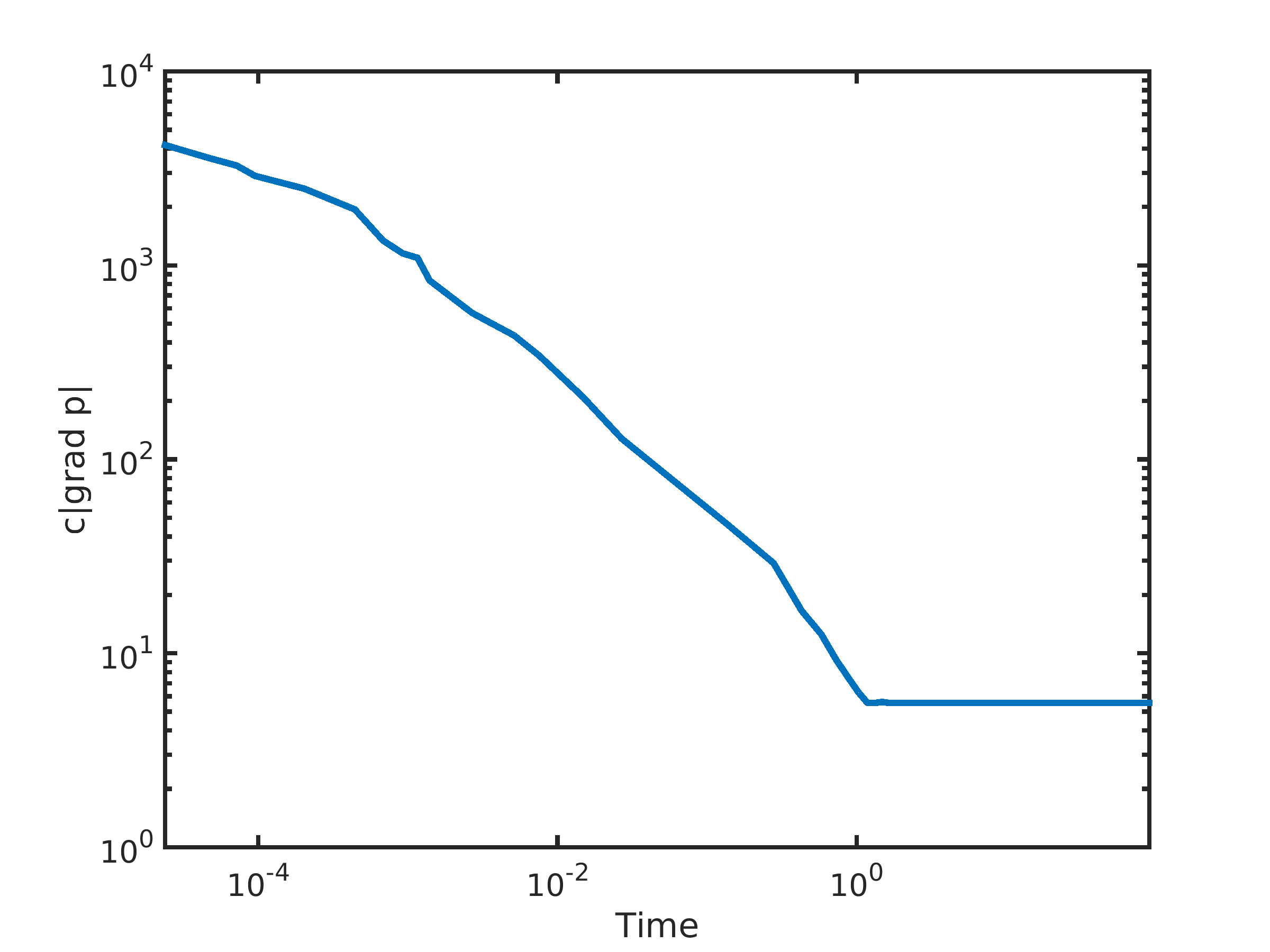}
 \caption{Velocity $|u_h^k|$ for $\gamma=\frac{3}{2}$, $D=\frac{1}{1000}$ in a Log$_{10}$-scale, and corresponding evolution of the $\mathcal{E}_h(m_h^k)$ and $\|c|\nabla p|\|_{L^\infty(\Omega)}$ vs. time in a logarithmic scaling.\label{fig:evolution_gamma15}}
\end{figure}

% \begin{figure} \centering
% %  \includegraphics[width=.49\textwidth]{u_gamma2_0}
%  \includegraphics[width=.24\textwidth]{u_gamma2_24000}
%  \includegraphics[width=.24\textwidth]{u_gamma2_48000}
%  \includegraphics[width=.24\textwidth]{u_gamma2_72000}
% %  \includegraphics[width=.49\textwidth]{u_gamma2_96000}
% %  \includegraphics[width=.49\textwidth]{u_gamma2_120000}\\
% %  \includegraphics[width=.49\textwidth]{u_gamma2_144000}
%  \includegraphics[width=.24\textwidth]{u_gamma2_168000}
%  \caption{Evolution of the velocity $|u_h^k|$ for $\gamma=2$ in a Log$_{10}$-scale for different times. \label{fig:evolution_gamma2}}
% \end{figure}

\begin{figure} \centering
 \includegraphics[width=.32\textwidth]{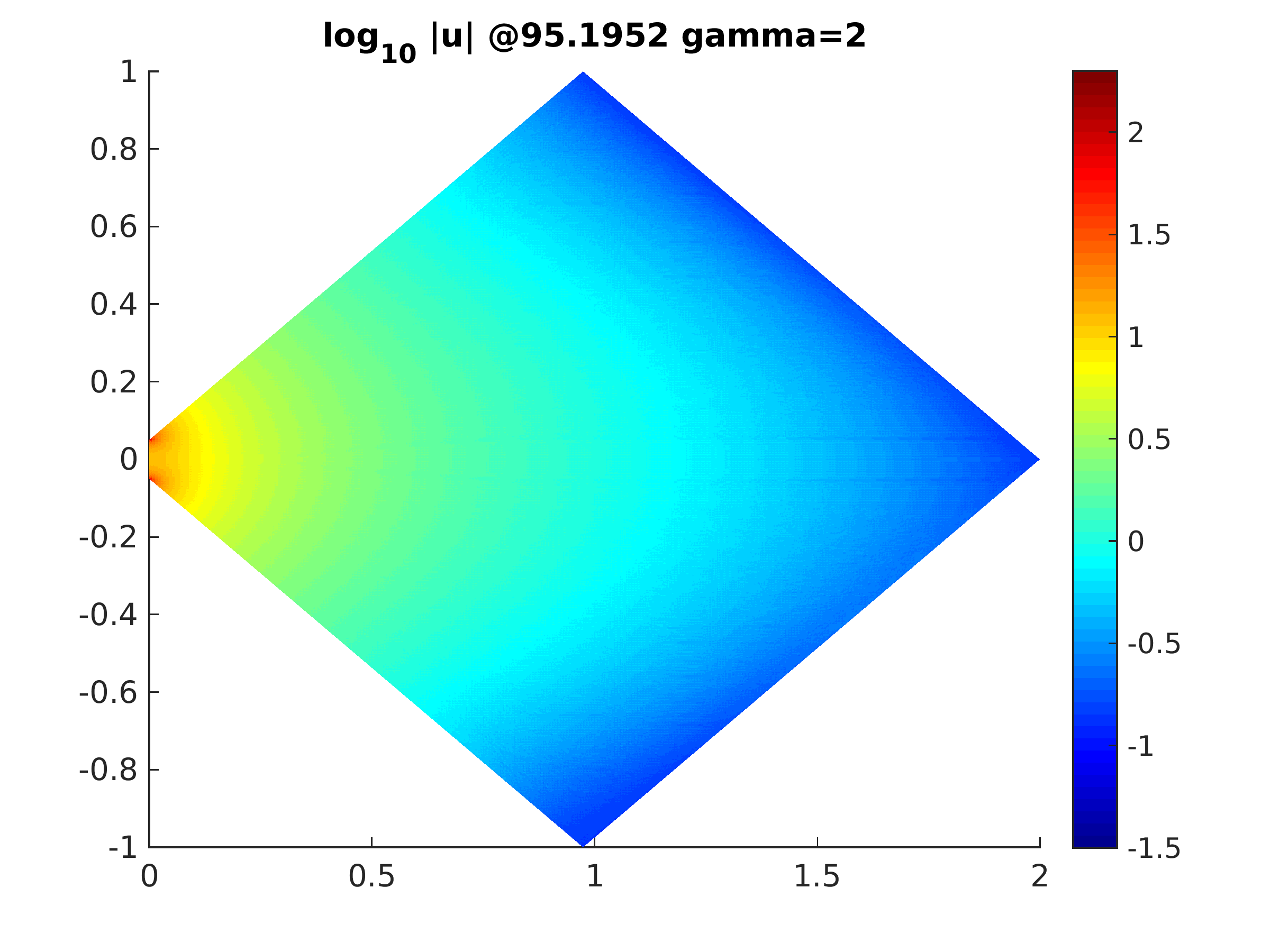}
 \includegraphics[width=.32\textwidth]{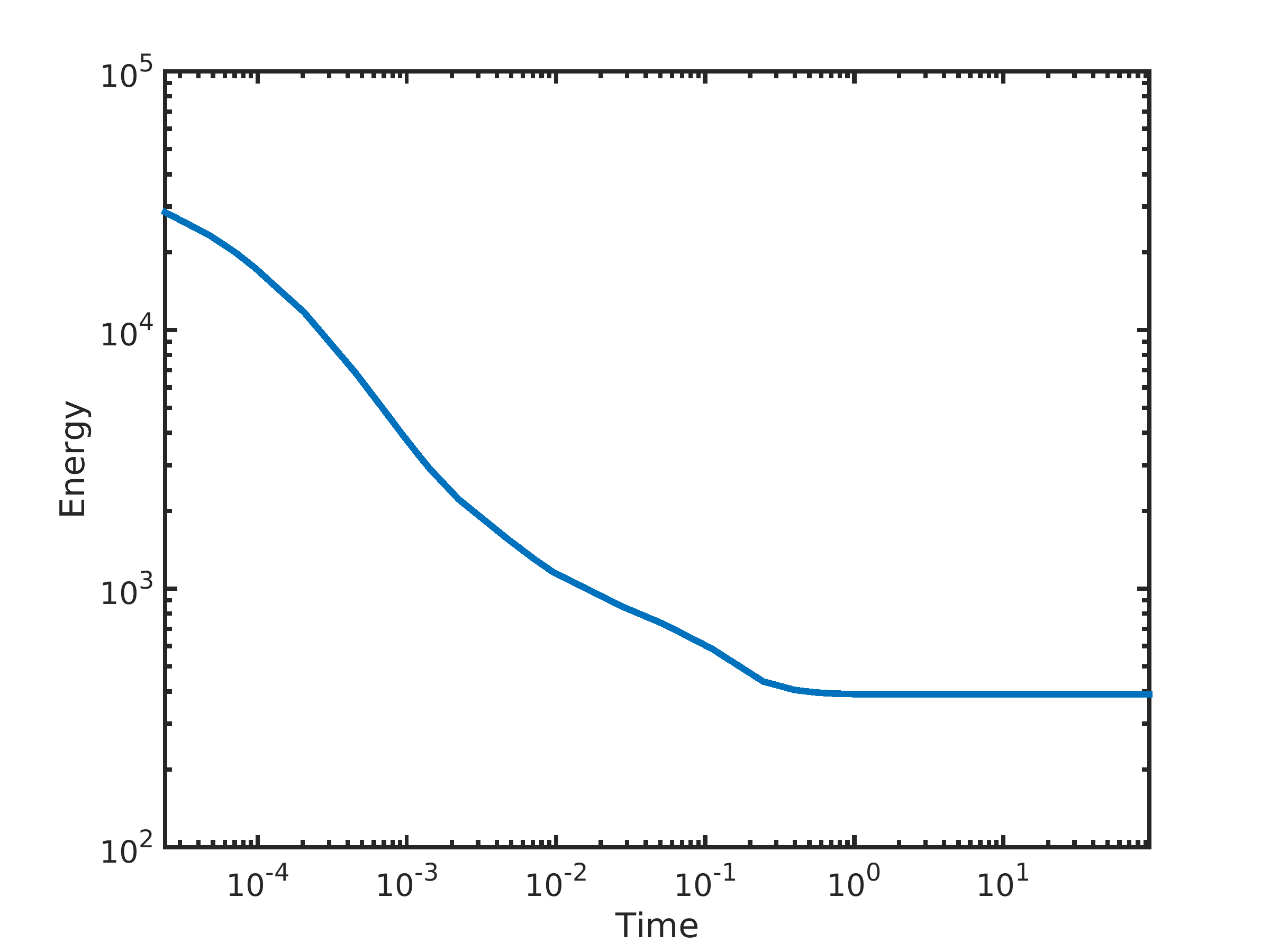}
 \includegraphics[width=.32\textwidth]{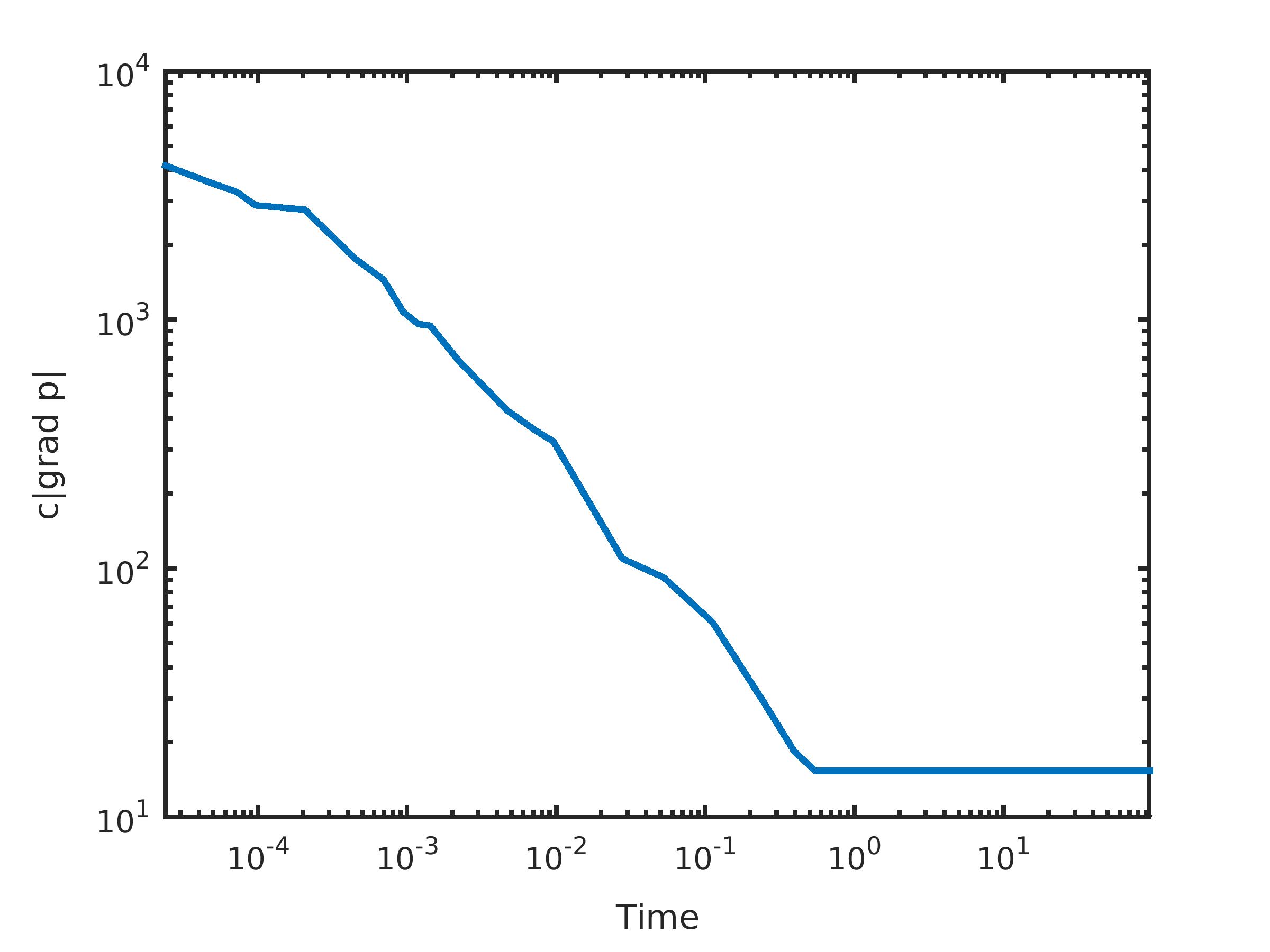}
 \caption{Velocity $|u_h^k|$ for $\gamma=2$, $D=\frac{1}{1000}$ in a Log$_{10}$-scale, and corresponding evolution of the $\mathcal{E}_h(m_h^k)$ and $\|c|\nabla p|\|_{L^\infty(\Omega)}$ vs. time in a logarithmic scaling. \label{fig:evolution_gamma2}}
\end{figure}

\begin{table}
\centering
 \begin{tabular}{c| r r r r r}
  $\gamma$ & $ \frac{3}{5}$ & $\frac{3}{4}$ & $1$ & $\frac{3}{2}$ & $2$\\
  \hline
%   $k$            & $417\times 10^3$  &  $1494\times 10^{3}$  &    $2574\times 10^3$ & $1224\times 10^3$ \\
  $t^k$           &    $121.5026$        &     $131.7063$  &      $140.3437$         &  $89.9778$ & $95.1952$ \\
  $\E_{h,t}^{k}$ & $-1.3\times 10^{-2}$ & $-3.4\times 10^{-3}$ & $-2.1\times 10^{-3}$  &  $-4.2\times 10^{-7}$& $-9.8 \times 10^{-7}$\\
  $m_{h,t}^k$    & $2.1\times 10^{-1}$& $9.1\times 10^{-2}$  & $1.1\times 10^{-1}$   &  $1.7\times 10^{-4}$& $1.1\times 10^{-5}$ \\
  $s_k$          & $3.547262$ &  $4.153456$         &    $1.555159$   &  $1.181271$          & $1.161117$ 
 \end{tabular}
 \caption{Stationarity and sparsity measures for different values of $\gamma$ and $D=\frac{1}{1000}$, see Section~\ref{sec:varying_gamma}.
 For $\gamma\in \{\frac{3}{2},2\}$ the change in the energy $\E_h$ is already within machine accuracy.
 \label{tab:sparsity_gamma}}
\end{table}
% it=1845000 time = 8.998e+01 E_t=8.7e-07	 m_t=1.7e-04	 E=228.148348	 max(c|grad p|)=5.55  dt=5.0e-05 sparsity=1.181270e+00 %gamma 1.5
% it=1953000 time = 9.520e+01 E_t=2.4e-08	 m_t=1.1e-05	 E=390.562190	 max(c|grad p|)=15.34  dt=5.0e-05 sparsity=1.161117e+00 % gamma 2
% it=2679000 time = 1.317e+02 E_t=-3.4e-03	 m_t=9.1e-02	 E=52.175701	 max(c|grad p|)=2.86  dt=5.0e-05 sparsity=4.153456e+00 % gamm 0.75
% it=1482000 time = 1.124e+02 E_t=-9.4e-03	 m_t=2.0e-01	 E=34.515869	 max(c|grad p|)=8.32  dt=1.0e-05 sparsity=3.549856e+00 % gamma .6

\subsection{Unstable stationary solutions for $D=0$ and $\frac{1}{2}\leq \gamma <1$}\label{sec:num_instability}
In Section~\ref{sec:instable_gamma_less_1} we have constructed stationary solutions for $D=0$ and $\frac{1}{2}\leq \gamma<1$.
In one dimension our stability analysis shows that these stationary states are not stable.
In the following, we indicate that these stationary states are unstable also in two dimensions.
To do so, we compute the minimizer of the functional $\mathcal{F}_\alpha$ defined in \eqref{fF} with 
$$\alpha=\alpha_\gamma=c^{-\frac{1}{4}} \Big(\frac{1-\gamma}{1+\gamma}\Big)^{\frac{\gamma-1}{2}},%\approx 0.4949, %\frac{1}{c}^{\frac{1}{2(\gamma-1)}\frac{\gamma-1}{2}}
$$
which is \eqref{alpha_gamma} for general values of $c$.
% in order to compute a stationary solution of \ref{eq1}--\eqref{eq2} for $D=0$. 
For the minimization we use a gradient descent method with step-sizes chosen by the Armijo rule \cite{NocedalWright}. The iteration is stopped as soon as two subsequent iterates of the pressure, say $p^k$ and $p^{k+1}$, satisfy $\|p^{k}-p^{k+1}\|_{H^1(\Omega)}/\|p^k\|_{H^1(\Omega)} < 10^{-15}$, i.e. they coincide up to round-off errors.
Since the derivative of $\mathcal{F}_\alpha$ is discontinuous, one should in general use more general methods from convex optimization to ensure convergence of the minimization scheme, for instance proximal point methods \cite{Rockafellar}. However, in our example also the gradient descent method converged.
We set $\gamma=1/2$, $r=1$ and $c=50$. Moreover, we let $\mathcal{A}=\{x\in\RR^2: (x_1-1)^2-x_2^2<1/4\}\subset\Omega$, see Figure~\ref{fig:stationary_instable}.
The stationary conductance $m_0$ is computed via \eqref{m0_cutoff} and satisfies
\begin{align*}
 \| c\nabla p_0\otimes c\nabla p_0 m_0 - |m_0|^{2(\gamma-1)} m_0\|_{L^2(\Omega)} \approx 2 \times 10^{-16},
\end{align*}
which shows stationarity of the resulting solution $(p_0,m_0)$ up to machine precision.
The resulting stationary pressure and conductances are depicted in Figure~\ref{fig:stationary_instable}. 
% We have $\mathcal{E}(m_0)\approx 8712.62$.

\begin{figure} \centering
 \includegraphics[width=.24\textwidth]{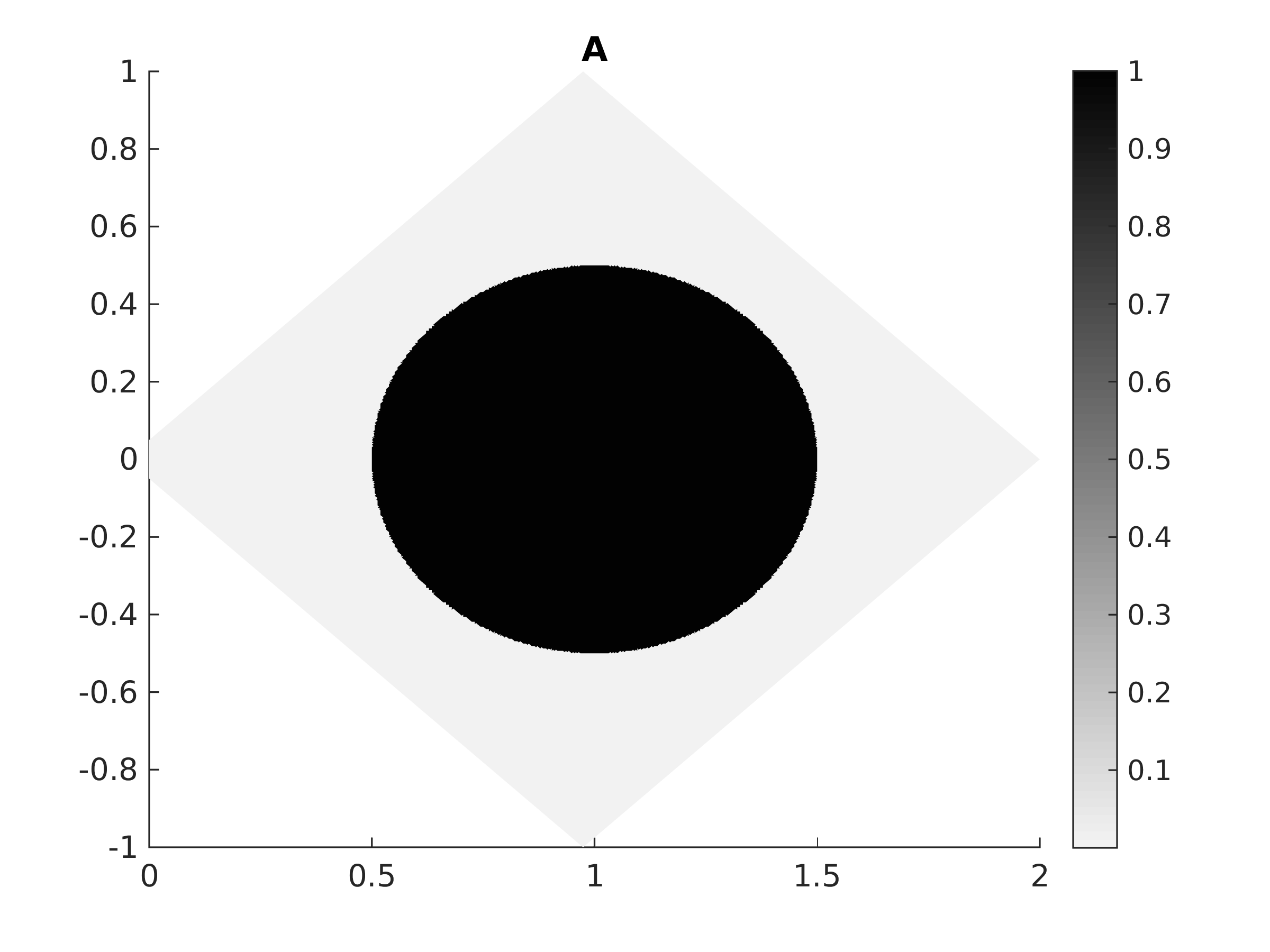}
 \includegraphics[width=.24\textwidth]{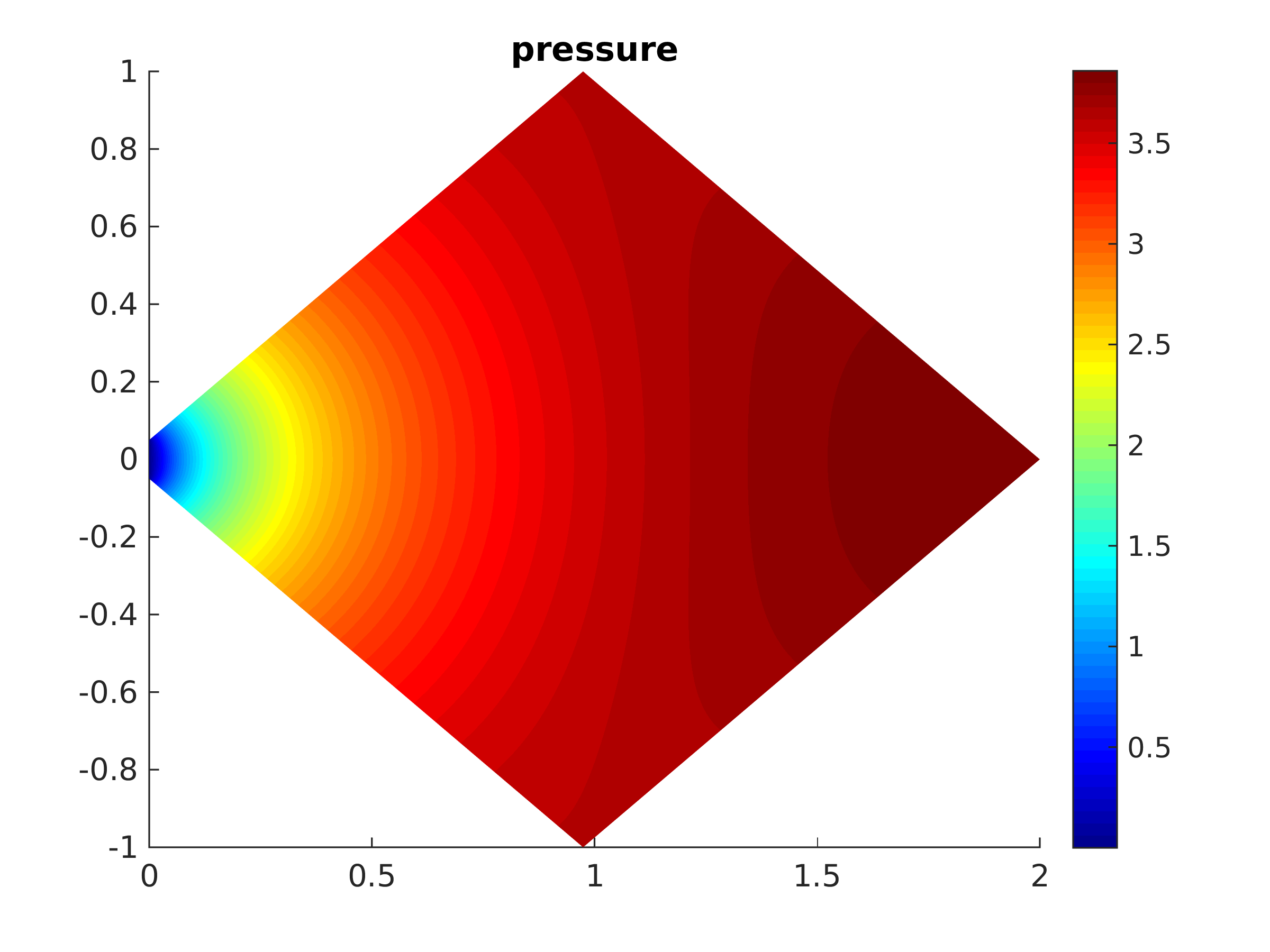}
 \includegraphics[width=.24\textwidth]{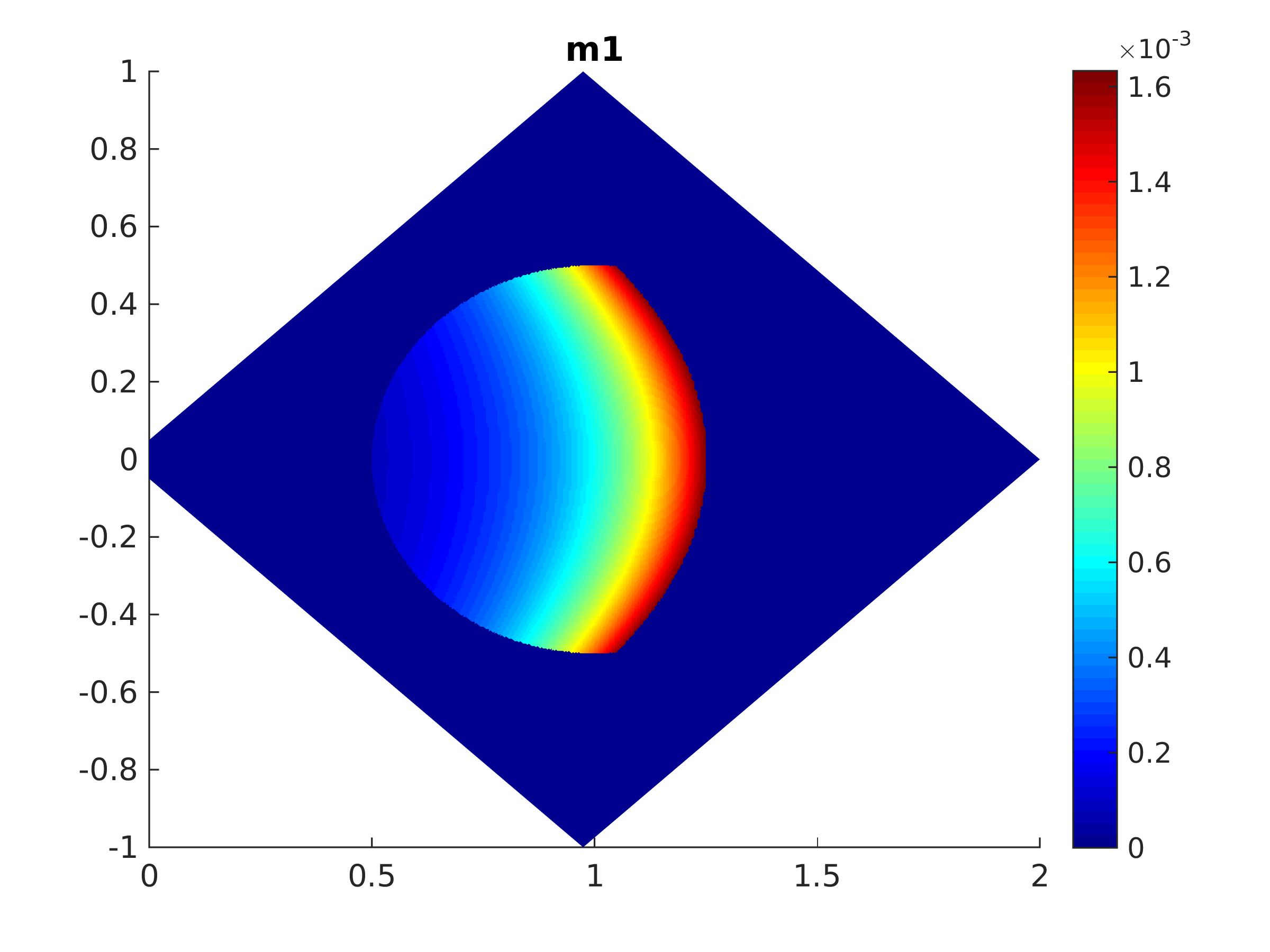}
 \includegraphics[width=.24\textwidth]{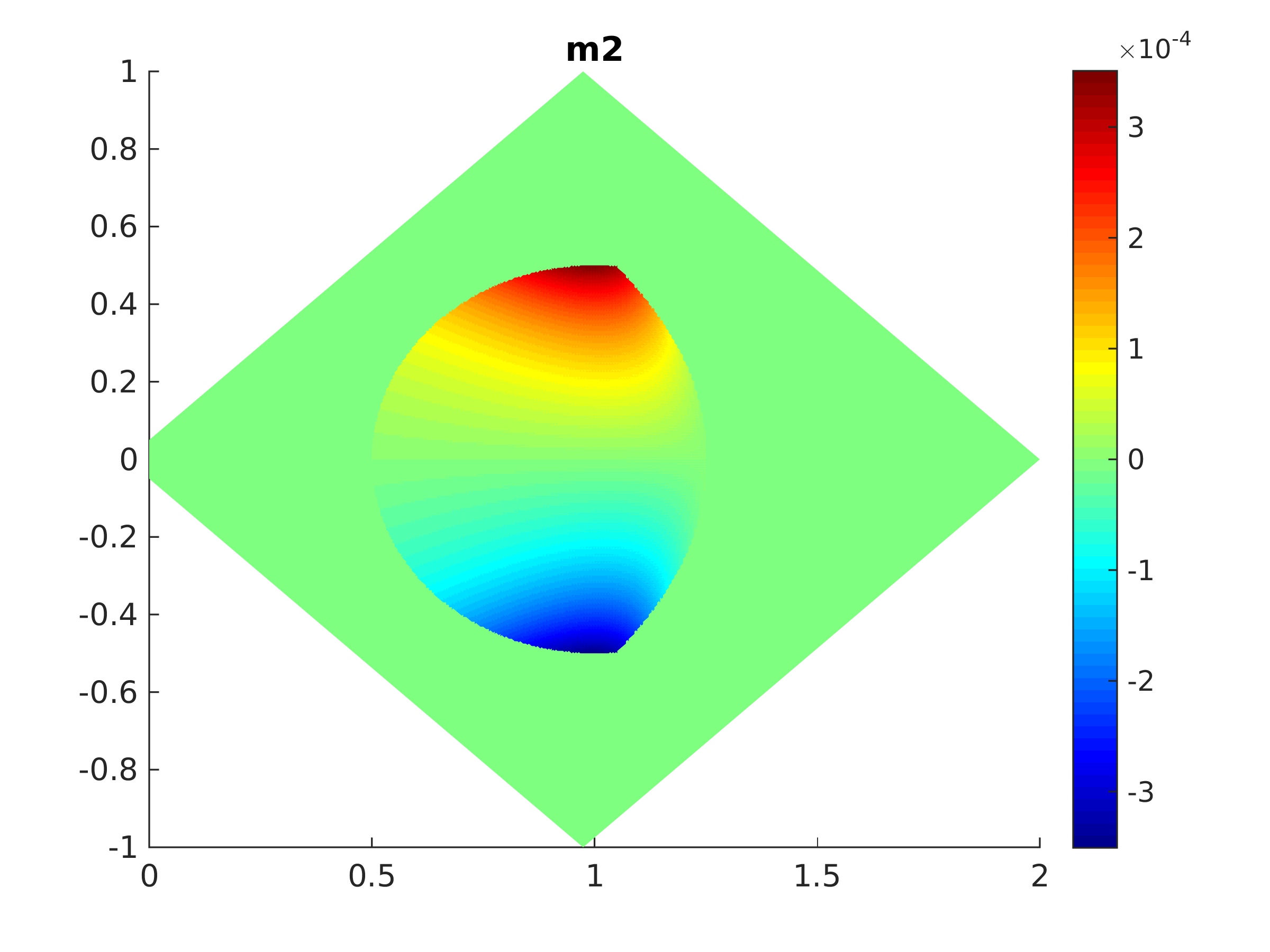}
%  \includegraphics[width=.3\textwidth]{domain}
%  \caption{Stationary solution for $D=0$ and $\gamma=1/2$ via minimization of $\mathcal{F}_\alpha$ defined in Eq. \eqref{fF}. Top row: Indicator function $\chi_\mathcal{A}$ of the set $\mathcal{A}$ (left). Stationary pressure $p_0$ (right).
%  Bottom row: Stationary conductances $m_{0,1}$ (left) and $m_{0,2}$ (right).\label{fig:stationary_instable}}
 \caption{Stationary solution for $D=0$ and $\gamma=1/2$ via minimization of $\mathcal{F}_\alpha$ defined in \eqref{fF}. From left to right: Indicator function $\chi_\mathcal{A}$ of the set $\mathcal{A}$. Stationary pressure $p_0$. Stationary conductances $m_{0,1}$ and $m_{0,2}$.\label{fig:stationary_instable}}
\end{figure}

In order to investigate the stability of the stationary state, we let $\eta$ denote uniformly distributed random noise on $[-\frac{1}{2},\frac{1}{2}]$, which we normalize such that $\|\eta\|_{L^2(\Omega)}=1$. We set
\begin{align*}
 m^0_\eta = (1+ \frac{\eta}{1000}) m_0
\end{align*}
as initial datum for our time-stepping scheme. The resulting evolution is depicted in Figure~\ref{fig:stationary_instable_evolution}.
Since the added noise is very small, the first picture in Figure~\ref{fig:stationary_instable_evolution} is visually identical to the absolute value $|m_0|$ of the unperturbed stationary solution.
We observe that $m_\eta(t)$ does not converge to $m_0$, showing instability of the stationary state $(p_0,m_0)$. 
% This is in agreement with the one-dimensional analysis presented in Section~\ref{sec:instable_gamma_less_1}, which might therefore be extended to the multidimensional case.

\begin{figure} \centering
 \includegraphics[width=.24\textwidth]{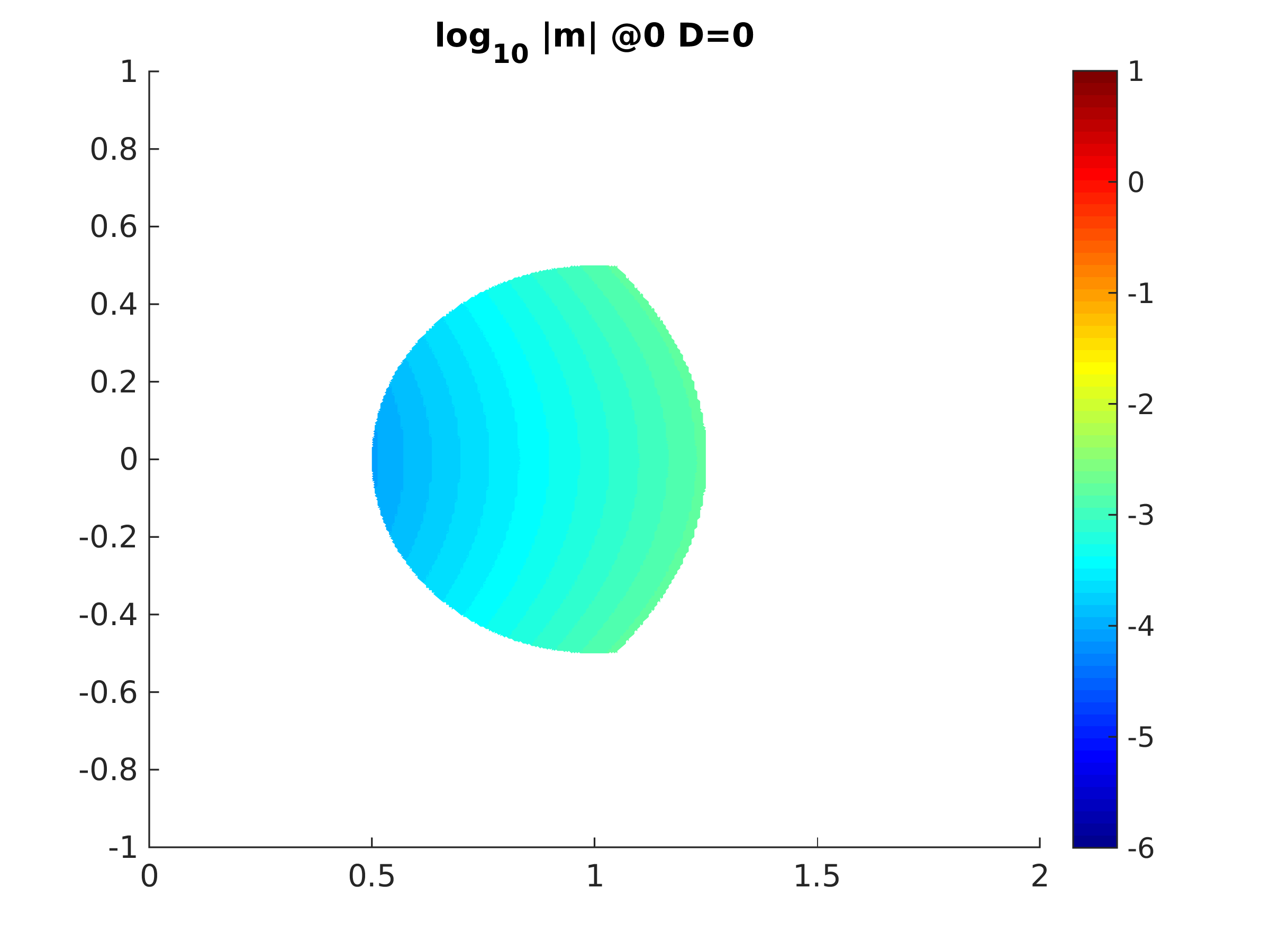}
 \includegraphics[width=.24\textwidth]{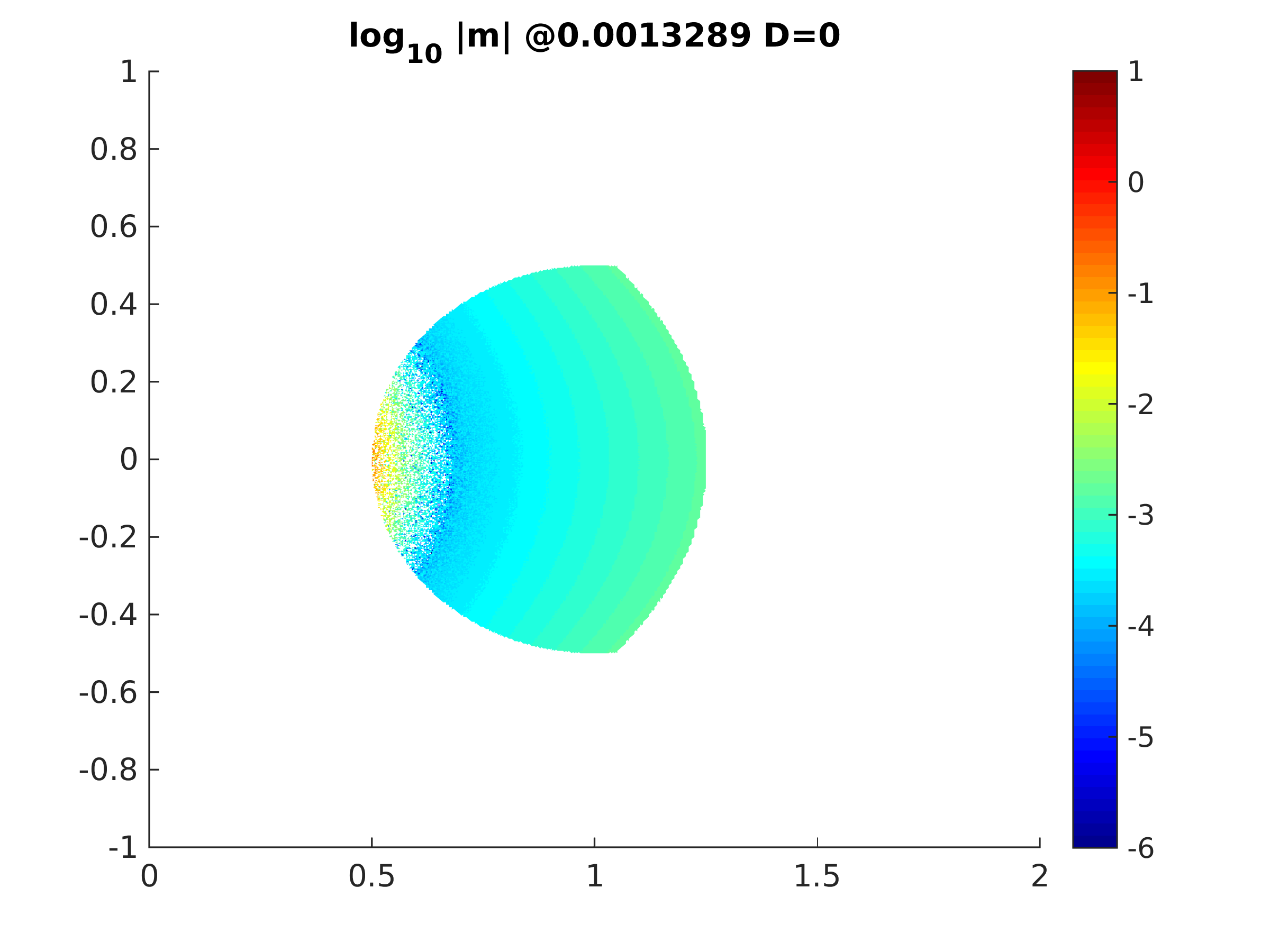}
 \includegraphics[width=.24\textwidth]{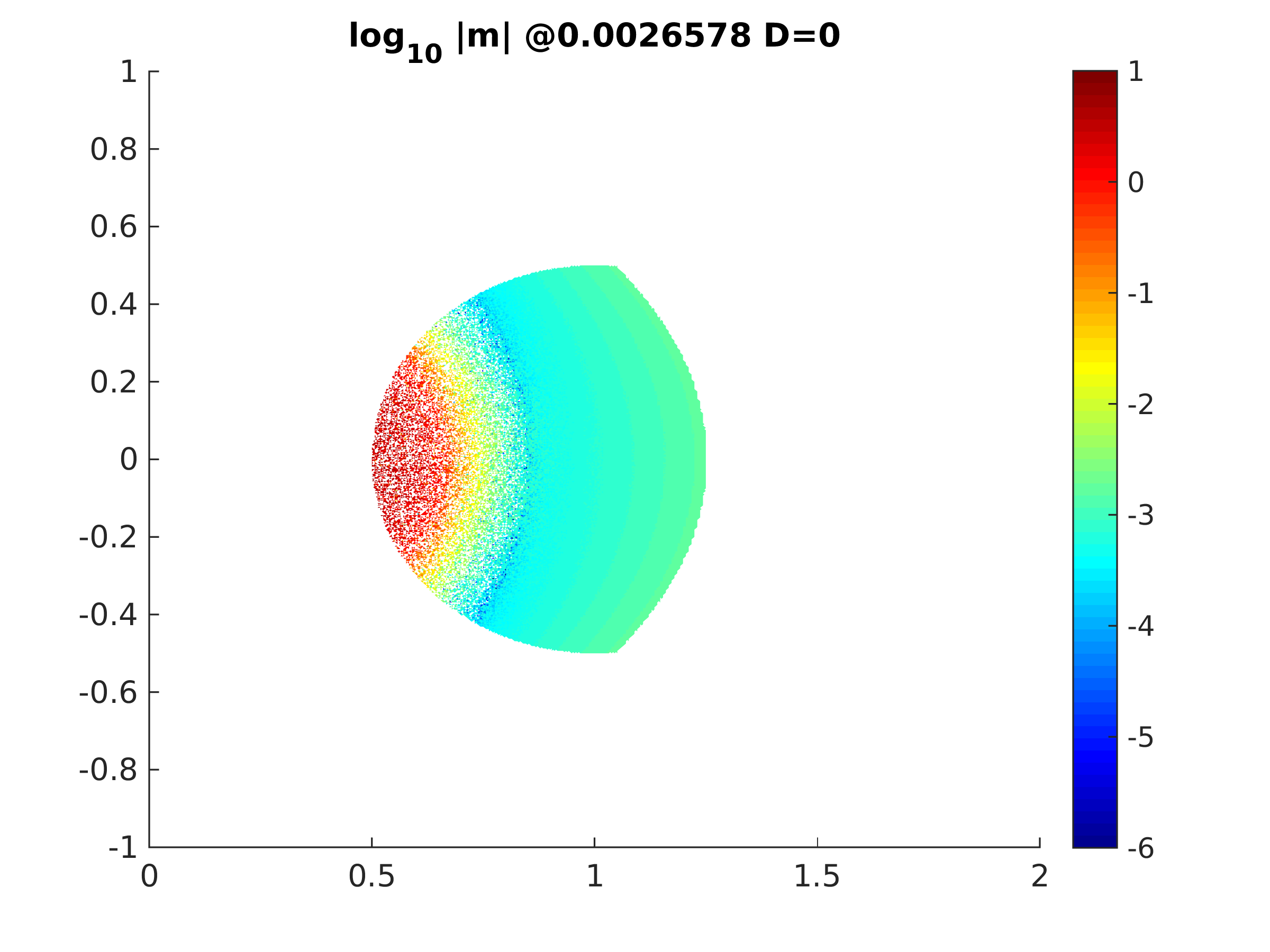}
 \includegraphics[width=.24\textwidth]{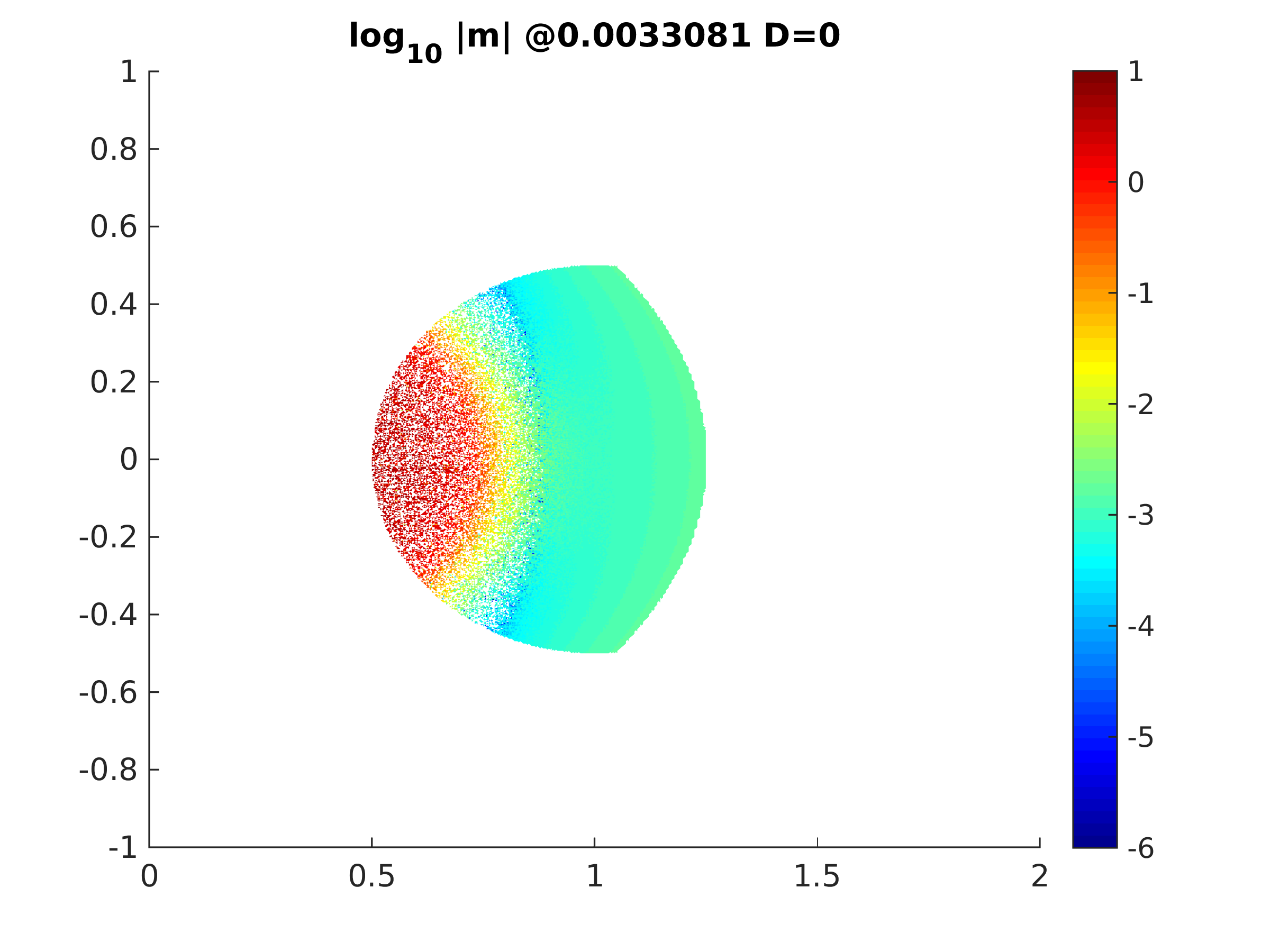}
 \caption{Evolution of the absolute value of the perturbed stationary solution $m^0_\eta$ computed in Section~\ref{sec:num_instability} for different times in a logarithmic scale.\label{fig:stationary_instable_evolution}}
\end{figure}

\subsection{Finite time break-down for $\gamma<1/2$}
In Section~\ref{sec:finite_time_breakdown} we have proven that $m$ decays exponentially to zero for $-1\leq \gamma\leq 1$, and that $m$ becomes zero after a finite time if $\gamma<1/2$ and the quantity $cS$ is sufficiently small.
In this case, the relaxation term $|m|^{2(\gamma-1)} m$ develops a singularity and is meaningless in the limit $|m|\to 0$. 
In the following we demonstrate numerically that this happens also in two dimensions, which complements the one-dimensional analysis of Section~\ref{sec:finite_time_breakdown}.
In view of Section~\ref{sec:finite_time_breakdown}, we replace the homogeneous Dirichlet conditions for $m$ by homogeneous Neumann conditions.
% , i.e. for the mixed formulation we impose homogeneous Dirichlet conditions on $\sigma\cdot n$.
For our test we set $c=1$ and $\rho=0$. To start with a strictly positive initial datum, we modify $m^0$ as follows
\begin{align*}
 \tilde m^0_1 = m^0_1 + 10^{-3},\quad \tilde m^0_2=m^0_2,
\end{align*}
and we define the extinction time as
$$
T_{ex,\gamma} = \min\{t=t^k: \min_{x\in\Omega} |m_h^k(x)|< 10^{-8}\}.
$$
The results are depicted in Table~\ref{tab:finite_time_extinction}. We observe that the smaller $\gamma$ the shorter the extinction time is.
Monotone increase of $\gamma\mapsto T_{ex,\gamma}$ might be expected since for smaller values of $\gamma$ the relaxation term becomes more singular as $m\to 0$. Therefore, for smaller $\gamma$ the relaxation term dominates the activation term already for smaller times. In particular, $f_{\gamma,c}(m_h^k,\nabla p_h^k)$ acts like a sink. 
The threshold $10^{-8}$ seems to be somewhat arbitrary in the first place. However, in all our numerical simulations $\min_{x\in\Omega}|m_h^k(x)|$ decreased in a continuous fashion to values being approximately $10^{-6}$, and then dropped below the threshold in one step. Therefore, all threshold values in the interval $[10^{-8},10^{-6}]$ would yield the same $T_{ex,\gamma}$ in these examples.
% More precisely, the presented data is fitted well by the functional dependence $T_{ex,\gamma}= T_{ex,0}+\gamma^{2.8}/1000$. This is however only preliminary and a theoretical justification is missing, but it indicates that the results of Lemma~\ref{lem:extFT} might be extended to multiple dimensions. 
This result indicates that Lemma~\ref{lem:extFT} might be extended to multiple dimensions.

\begin{table}
\centering
 \begin{tabular}{c| c c c c}
  $\gamma$ & 0 & $\frac{1}{10}$ & $\frac{1}{4}$ & $\frac{2}{5}$\\
  \hline
  $T_{ex,\gamma}$ &  $5.3\times 10^{-7}$ & $2.6\times 10^{-6}$& $2.1 \times 10^{-5}$ &  $2.1\times 10^{-4}$\\
  $\min_{x\in\Omega} |m_h^k(x)|$ & $5.9\times 10^{-9}$& $5.4\times 10^{-9}$& $3.7\times 10^{-12}$& $2.8\times 10^{-10}$
 \end{tabular}
 \caption{Extinction times $T_{ex,\gamma}$ for different values of $\gamma$.\label{tab:finite_time_extinction}}
\end{table}

\medskip

\noindent{\bf Acknowledgment.}
BP is  (partially) funded by the french "ANR blanche" project Kibord:  ANR-13-BS01-0004" and by Institut Universitaire de France. 
PM acknowledges support of the Fondation Sciences Mathematiques de Paris in form of his Excellence Chair 2011.
MS acknowledges support by ERC via Grant EU FP 7 - ERC Consolidator Grant 615216 LifeInverse.

%-----------------------------------------------

\end{document}